\pgfplotsset{compat=newest}
\pgfplotsset{colormap={CM}{rgb=(0,0,1) color=(black) rgb255=(238,140,238)}}
\newlength\figureheight 
\newlength\figurewidth
\begin{document}
\title{Subgradient-based Lavrentiev regularisation\\ of monotone ill-posed problems}
\author{Markus Grasmair\,\orcid{0000-0002-6116-5584}\email{markus.grasmair@ntnu.no} \and Fredrik Hildrum\,\orcid{0000-0002-9905-1670}\email{arxiv@fredrik.hildrum.net}}
\address{Department of Mathematical Sciences,\\ NTNU -- Norwegian University of Science and Technology,\\ 7491 Trondheim, Norway}
\titlerunning{Subgradient-based Lavrentiev regularisation}
\authorrunning{M.~Grasmair and F.~Hildrum}
\date{\today}
\maketitle

\titlegraphic{%
\colorlet{artcolor-titlegraphic}{artcolor!75!artcolorbg}%
\vspace*{-0.8em}%
\centering%
\setlength\figureheight{.1\textwidth}%
\setlength\figurewidth{.4\textwidth}%
\input{tikz/title_volterra.tikz}%
\hspace{.04\textwidth}\raisebox{1em}{\textcolor{artcolor-titlegraphic}{\footnotesize\({ \xrightarrow[]{\textsc{ denoise }} }\)}}\hspace{.04\textwidth}%
%
%
\colorlet{mycolor1}{artcolor-titlegraphic}%
\colorlet{mycolor2}{artcolortext}%
\begin{tikzpicture}

\begin{axis}[%
width=\figurewidth,
height=\figureheight,
at={(0\figurewidth,0\figureheight)},
scale only axis,
hide axis,
xmin=-0.025,
xmax=1.025,
ymin=-17,
ymax=10.1
]

\addplot [color=mycolor1,solid,line width=1.5pt,forget plot]
  table[row sep=crcr]{%
0	-7.66092233612626\\
0.029029029029029	-7.66092233612626\\
0.03003003003003	3.42497544479183\\
0.032032032032032	3.42497544479183\\
0.033033033033033	5.41625836293544\\
0.036036036036036	5.41625836293544\\
0.037037037037037	6.71166106155351\\
0.0820820820820821	6.71166106155351\\
0.0830830830830831	6.39359809173781\\
0.0990990990990991	6.39359809173781\\
0.1001001001001	1.37564853019105\\
0.101101101101101	1.37564853019105\\
0.102102102102102	-4.44796654431455\\
0.119119119119119	-4.44796654431455\\
0.12012012012012	-5.12777658008614\\
0.146146146146146	-5.12777658008614\\
0.147147147147147	-5.74007525951299\\
0.152152152152152	-5.74007525951299\\
0.153153153153153	-12.5362788239431\\
0.2002002002002	-12.5362788239431\\
0.201201201201201	-1.08444552448558\\
0.211211211211211	-1.08444552448558\\
0.212212212212212	-0.506988989989414\\
0.253253253253253	-0.506988989989414\\
0.254254254254254	2.79312141185089\\
0.315315315315315	2.79312141185089\\
0.316316316316316	-0.0687269461230024\\
0.369369369369369	-0.0687269461230024\\
0.37037037037037	-0.126944876549684\\
0.382382382382382	-0.126944876549684\\
0.383383383383383	-5.59198535649894\\
0.448448448448448	-5.59198535649894\\
0.449449449449449	-3.99215517370783\\
0.451451451451451	-3.99215517370783\\
0.452452452452452	0.76815537890766\\
0.459459459459459	0.76815537890766\\
0.46046046046046	3.23528036778794\\
0.523523523523523	3.23528036778794\\
0.524524524524524	7.58368711003963\\
0.53953953953954	7.58368711003963\\
0.540540540540541	7.81471952908762\\
0.621621621621622	7.81471952908762\\
0.622622622622623	6.81657346392052\\
0.623623623623624	6.81657346392052\\
0.624624624624625	3.44780870815387\\
0.631631631631632	3.44780870815387\\
0.632632632632633	2.36232718500623\\
0.648648648648649	2.36232718500623\\
0.64964964964965	1.30145517601352\\
0.678678678678679	1.30145517601352\\
0.67967967967968	1.21203845359852\\
0.770770770770771	1.21203845359852\\
0.771771771771772	1.26983635990426\\
0.776776776776777	1.26983635990426\\
0.777777777777778	1.88800026778653\\
0.781781781781782	1.88800026778653\\
0.782782782782783	6.17426928064825\\
0.7997997997998	6.17426928064825\\
0.800800800800801	6.70481795829099\\
0.835835835835836	6.70481795829099\\
0.836836836836837	6.8494112434135\\
0.900900900900901	6.8494112434135\\
0.901901901901902	1.22449229510369\\
0.904904904904905	1.22449229510369\\
0.905905905905906	-3.07388395228273\\
1	-3.07388395228273\\
};
\end{axis}
\end{tikzpicture}
}

\begin{abstract}
We introduce subgradient-based Lavrentiev regularisation of the form
\begin{equation*}
\mathcal{A}(u) + \alpha \partial \mathcal{R}(u) \ni f^\delta
\end{equation*}
for linear and nonlinear ill-posed problems with monotone operators~\({ \mathcal{A} }\) and general regularisation functionals~\({ \mathcal{R} }\).
In contrast to Tikhonov regularisation,
this approach perturbs the equation itself and avoids the use of the adjoint of the derivative of~\({ \mathcal{A} }\).
It is therefore especially suitable for time-causal problems that only depend on information in the past and allows for real-time computation of regularised solutions.
We establish a general well-posedness theory in Banach spaces and prove convergence-rate results with variational source conditions.
Furthermore, we demonstrate its application in total-variation denoising in linear Volterra integral operators of the first kind and parameter-identification problems in semilinear parabolic~PDEs.
\end{abstract}

\keywords{nonlinear ill-posed problems; Lavrentiev regularisation; monotone operators; convergence rates; variational source conditions; integral equations; parameter-identification problems}
\subjclass[2010]{47H05; 47J06; 45Q05; 65J20; 65R30}

\section{Introduction}
\label{sec:intro}

In this paper we study stable approximations of a class of monotone and possibly nonlinear ill-posed equations of the form
\begin{equation} \label{eq:inverse-problem}
\mathcal{A} u = f^\dagger
\end{equation}
in the presence of noisy data~\({ f^\delta }\) satisfying
\begin{equation*}
\norm{ f^\dagger - f^\delta } \leq \delta,
\end{equation*}
where~\({ \delta > 0 }\) denotes the noise level. 
Here \(\mathcal{A} \colon \X \to \X^*\) is an operator mapping a reflexive Banach space~\(\X\) into its dual~\(\X^*\),
and for simplicity we usually write \({\mathcal{A}u}\) instead of~\({ \mathcal{A}(u) }\).

A common approach to stabilising~\eqref{eq:inverse-problem} with noisy data is Tikhonov regularisation (see for instance~\autocite{EngHanNeu96,SchGraGro09}),
which amounts to minimising the functional
\begin{equation} \label{eq:tikhonov-reg}
\mathcal{T}(u) \coloneqq \tfrac{ 1 }{ 2 }\norm{ \mathcal{A}u - f^\delta }^{ 2 } + \alpha \mathcal{R}(u)
\end{equation}
over~\(\X\).
The first term guarantees that the minimisers~\({ u_\alpha^\delta }\) approximately solve~\eqref{eq:inverse-problem},
\({ \mathcal{R} \colon \X \to [0, \infty] }\) is a suitable lower-semicontinuous and convex regularisation functional that encodes qualitative a~priori information about the solution,
and \({ \alpha > 0 }\) is the regularisation parameter balancing the two terms.

One classical example of ill-posed inverse problems is the solution of linear Volterra equations of the first kind~\autocite{Gro84}.
Here we are given some real-valued, time-dependent function~\({ f^\dagger \in \textnormal{L}^2(0, T) }\) supported on an interval~\({ [0, T] }\),
and a linear Volterra operator
\begin{equation} \label{eq:volterra-operator}
\mathcal{A}u(t) \coloneqq \int_{ 0 }^{ t } k(t, s) \, u(s) \dee s,
\end{equation}
with \({ t \leq T }\) and square-integrable kernel~\({ k \in \textnormal{L}^2( [0, T]^2) }\).
If we have the prior information that the true underlying signal~\({ u^\dagger }\) in~\eqref{eq:inverse-problem} is piecewise constant,
it makes sense to apply total-variation regularisation,
in which case the regularisation functional \(\mathcal{R}\colon \textnormal{L}^2(0,T) \to [0,\infty]\) is the total variation (TV) of the distributional derivative of~\({ u }\) over~\({ [0, T] }\), that is,
\begin{equation} \label{eq:total-variation}
\mathcal{R}(u) \coloneqq \seminorm{u}_{ \textnormal{TV}} \coloneqq \abs{ \textnormal{D}u }(0, T),
\end{equation}
as discussed in more detail in \cref{sec:volterra}.
This type of regularisation has for instance been studied in a related, two-dimensional setting~\autocite{RudOshFat1992a,AcaVog94}.

Although this choice of \({ \mathcal{R} }\) yields a nonsmooth optimisation problem,
the additional effort compared to quadratic regularisation is often worthwhile because of the improved reconstructions that are structurally much closer to the true solution.
Classical regularisation with a Sobolev regularisation term \({ \mathcal{R}(u) = \norm{ u' }_{ \textnormal{L}^2 } ^2 }\) will lead to smooth regularised solutions,
whereas the TV term is capable of reconstructing discontinuities.
In addition, the recent advances in nonsmooth optimisation have led to the development of efficient numerical algorithms (see for instance~\autocite{ComPes11,KomPes15}),
which make nonquadratic regularisers feasible for practical problems.

A particularly efficient algorithm is available for TV regularisation in the case where \({ \mathcal{A} }\) is the identity operator on~\({ \textnormal{L}^2(0, T) }\) (which is not of type~\eqref{eq:volterra-operator}).
Here, Tikhonov's method~\eqref{eq:tikhonov-reg} with TV~\eqref{eq:total-variation} can be seen as a spline-based nonlinear regression method~\autocite{MamGee1997a,SteDidNeu2005a}.
Moreover, the (nonlinear) problem of minimising the Tikhonov functional~\eqref{eq:tikhonov-reg} can in this case be solved efficiently by means of the \emph{taut-string algorithm}~\autocite{DavKov2001a,Gra2007a,Con2013a},
which is a dynamic-programming method for the solution of a dual problem.

In the case of a Volterra operator of the form~\eqref{eq:volterra-operator}, the situation is different.
There, standard optimisation theory implies that the minimisers of the Tikhonov functional are precisely the solutions of the variational inclusion
\begin{equation} \label{eq:tikhonov-optconds}
0 \in \partial \mathcal{T}(u_\alpha^\delta) = \mathcal{A}^* (\mathcal{A}u_\alpha^\delta - f^\delta) + \alpha \partial \mathcal{R}(u_\alpha^\delta),
\end{equation}
where~\({ \partial }\) denotes the subdifferential.
This equation, however, requires the adjoint of \(\mathcal{A}\), which reverses the time-causal structure.
That is, whereas the operator~\(\mathcal{A}\) consists of an integration backward in time,
its adjoint~\(\mathcal{A}^*\) consists in the integration forward in time,
\begin{equation*}
\mathcal{A}^*u(t) = \int_{ t }^{ T } k(s, t) \, u(s) \dee s,
\end{equation*}
which at each point depends completely on future information.
As a consequence, Tikhonov regularisation is not applicable in situations where the input data consists of a continuous data stream and one tries to solve the inverse problem \emph{on-the-fly} in real time.
In addition, it is impossible to implement the same dynamic-programming idea as in the taut-string algorithm in order to obtain a similarly efficient numerical solution method.

An alternative to Tikhonov regularisation for the stable solution of~\eqref{eq:inverse-problem} is \emph{Lavrentiev regularisation} (see for instance~\autocite{Tau2002a,NaiTau2004a,AlbRya2006a,ArgChoGeo2013a,MahNai2013a,BotHof2016a,HofKalRes2016a,PlaMatHof2018a,GeoSre2018a,PlaHof2019b}) consisting in solving the operator equation
\begin{equation*} 
\mathcal{A}u + \alpha u = f^\delta
\end{equation*}
in the Hilbert-space setting, or more generally
\begin{equation} \label{eq:lavrentiev-reg}
\mathcal{A}u + \alpha J(u) \ni f^\delta
\end{equation}
in the Banach-space equivalent,
where \({ J \coloneqq \partial \bigl( \tfrac{ 1 }{ 2 } \norm{  }^{ 2 }  \bigr) }\) is the normalised duality mapping~\eqref{eq:norm-duality-mapping}.
This regularisation method is applicable for \emph{monotone} operators, that is, for those \({ \mathcal{A} }\) that satisfy
\begin{equation*}
\dualitypairing{ \mathcal{A}u - \mathcal{A}v }{ u - v } \geq 0 \qquad \text{for all } u, v \in \X.
\end{equation*}
In contrast to Tikhonov regularisation~\eqref{eq:tikhonov-reg},
Lavrentiev regularisation~\eqref{eq:lavrentiev-reg} simply perturbs the equation itself, has a local nature, and avoids the introduction of noncausal effects in the form of adjoints.

Suggestively, we now propose to regularise~\eqref{eq:inverse-problem} by solving the variational inclusion
\begin{equation} \label{eq:subgradient-lavrentiev-reg}
\mathcal{A}u + \alpha \partial \mathcal{R}(u) \ni f,
\end{equation}
which combines the general regularisation functionals in the Tikhonov approach with the simplicity of Lavrentiev regularisation.
One may thus view~\eqref{eq:subgradient-lavrentiev-reg} either as the optimality conditions~\eqref{eq:tikhonov-optconds} with the adjoint removed or the inclusion~\eqref{eq:lavrentiev-reg} for more general subgradients.
Observe in passing that we display~\eqref{eq:subgradient-lavrentiev-reg} for a generic~\({ f \in \X^* }\) (say, \({ f^\dagger }\) or~\({ f^\delta }\)) in order to accommodate the upcoming analysis.

In this paper, we show that this \emph{subgradient-based Lavrentiev regularisation} -- by solving the inclusion problem~\eqref{eq:subgradient-lavrentiev-reg} -- yields a completely new, well-posed regularisation method for a general class of monotone operators~\({ \mathcal{A} }\) and regularisers~\({ \mathcal{R} }\) in reflexive Banach spaces.
To this end, it is important to emphasise that our setting does not require coercivity of either~\(\mathcal{A}\) or~\(\mathcal{R}\),
as will be discussed in more detail in \cref{sec:results}.

In addition to showing well-posedness,
we derive asymptotic error estimates under the assumption of variational source conditions.
There, we follow the ideas of~\autocite{HofKalRes2016a},
where the same approach was used for the analysis of classical Lavrentiev regularisation.
All these results as well as the requirements for \(\mathcal{A}\) and \(\mathcal{R}\) are collected in \cref{sec:results}.
Moreover, \cref{sec:background} provides a review of the functional-analytic background that is required for the well-posedness proof.
We discuss a quantitative formulation of the Browder--Minty theorem concerning existence and uniqueness of monotone operator equations that includes an explicit bound on the solution.
We then provide the proof of well-posedness in \cref{sec:wellposedness} and the derivation of convergence rates in \cref{sec:convergence-rates}.
Finally, we demonstrate the new approach~\eqref{eq:subgradient-lavrentiev-reg} with TV denoising in both convolutional Volterra integral equations in \cref{sec:volterra}
and nonlinear parameter-identification problems for parabolic~PDEs in \cref{sec:param-id-problem}.
This includes new conditions on the convolution kernel~\eqref{eq:volterra-operator} that guarantee strict monotonicity of~\({ \mathcal{A} }\),
and a detailed analysis of coercivity in the PDE problem.
We also briefly present numerical results based on a generalisation of the taut-string method,
which will be the subject of a forthcoming paper.

\section{Main results}
\label{sec:results}

\subsection{Assumptions}

We study subgradient-based Lavrentiev regularisation~\eqref{eq:subgradient-lavrentiev-reg} under the following conditions and refer to \cref{sec:background} for functional-analytic definitions.

\begin{assumption} \label{assumptions}
\leavevmode
\begin{enumerate}[ref={\roman*)}]
\item \label{assumptions:space}
\textcolor{artcolor}{\textit{Space:}} \({ (\X, \norm{  }) }\) is a real, reflexive Banach space with topological dual~\({ (\X^*, \norm{  }_{ \X^* }) }\), and \({ \dualitypairing{  }{  } \coloneqq \dualitypairing{  }{  }_{\X^* \times \X} }\) denotes the duality pairing.
\item \label{assumptions:operator}
\textcolor{artcolor}{\textit{Operator:}} \({\mathcal{A} \colon \X \to \X^* }\) is strictly monotone and hemicontinuous.
\item \label{assumptions:regulariser}
\textcolor{artcolor}{\textit{Regulariser:}} \({\mathcal{R} \colon \X \to (- \infty, +\infty]}\) is proper, convex, and lower semicontinuous.
\item
\textcolor{artcolor}{\textit{Solution:}} There exists a solution~\({u^\dagger \in \domain \mathcal{R} }\) to the original problem~\eqref{eq:inverse-problem}, that is, \({ \mathcal{A}u^\dagger = f^\dagger }\).
\item \label{assumptions:coercivity}
  \textcolor{artcolor}{\textit{Coercivity:}} The sublevel set
  \({ \Set[\big]{ u \in \X \given \norm{ u } + \mathcal{R}(u) \leq C } }\)
  is compact for all~\({ C > 0 }\).
\item \label{assumptions:growth}
  \textcolor{artcolor}{\textit{Growth:}}
  For all (sufficiently large) \({ C > 0 }\) and some \({ t_0 > 0 }\) we have that
  \begin{equation}\label{eq:coercivity-growth}
    \adjustlimits \lim_{r \to \infty} \inf_{u\in U_C} \dualitypairing*{ \mathcal{A}\bigl(ru + (1-r)u^\dagger \bigr) }{ u - u^\dagger } = \infty,
  \end{equation}
  where \({ U_C \coloneqq \Set[\big]{ u \in \X \given \norm{u-u^\dagger} = t_0 \text{ and } \mathcal{R}(u) \leq C} }\).
\end{enumerate}
\end{assumption}

Note that both \({ u^\dagger }\) and the regularised solutions are necessarily unique since \({ \mathcal{A} }\) is injective by strict monotonicity.
Moreover, if \({ \mathcal{A} }\) is linear, then (hemi)continuity comes for free,
since linear monotone operators are bounded.
As regards the natural assumption \({ u^\dagger \in \domain \mathcal{R} }\),
we point out that for classical Lavrentiev regularisation it is intrinsically satisfied and means simply that \({ u^\dagger }\) is an element of~\({ \X }\).

Switching focus to \Cref{assumptions}~\ref{assumptions:coercivity}, we emphasise that \({ \domain \mathcal{R} }\) is not necessarily a compact subset of~\({ \X }\),
which otherwise easily would have guaranteed coercivity.
In fact, in the typical case when \({ \mathcal{R} }\) is the seminorm of a (not necessarily compact) subspace of~\({ \X }\),
such as when \({ \mathcal{R} }\) equals total variation or a Sobolev seminorm in Lebesgue spaces (see \cref{sec:volterra}),
then \({ \domain \mathcal{R} }\) is \emph{not} compact in~\({ \X }\).
As such, \Cref{assumptions}~\ref{assumptions:coercivity} is weaker than assuming the compactness of~\({ \domain \mathcal{R} }\).

Furthermore, the growth condition in~\Cref{assumptions}~\ref{assumptions:growth} holds trivially for strictly monotone linear~\({ \mathcal{A} }\) and shows up naturally in the proof of well-posedness.
On the practical side, we also prove directly that it is satisfied in the nonlinear parameter-identification problem in \cref{sec:param-id-problem}.
To aid further applications, we include in addition an alternative condition which may be easier to verify.

\begin{proposition}[Alternative growth conditions]
  \label{thm:alternative-growth-conditions}
  Assume that \Cref{assumptions}~\ref{assumptions:space}--\ref{assumptions:regulariser}
  and \ref{assumptions:coercivity} is satisfied.
  Let $u_{\textnormal{ref}}  \in \X$ be a fixed reference point
  and assume that there exists a function
  \({ \gamma \colon (0, \infty) \to (0, \infty) }\)
  with superlinear growth,
  \[
    \lim_{r\to \infty} \frac{\gamma(r)}{r} = \infty,
  \]
  such that the \enquote{spherical growth condition}
  \begin{equation}
    \sup_{ \norm{ u } = 1 } \frac{ \norm*{ \mathcal{A}\bigl(ru + (1-r)u_{\textnormal{ref}}\bigr) }_{ \X^* } }{ \gamma(r) } \xrightarrow[ r \to\infty ]{  } 0  \label{eq:spherical-growth}
  \end{equation}
  holds and the \enquote{directional growth condition}
  \begin{equation}
    \dualitypairing*{ \mathcal{A}\bigl(ru + (1-r)u_{\textnormal{ref}}\bigr) - \mathcal{A}u_{ \textnormal{ref}} }{ u - u_{ \textnormal{ref}}} \geq \frac{\gamma(r)}{r} \dualitypairing*{ \mathcal{A}u - \mathcal{A}u_{ \textnormal{ref}} }{ u - u_{ \textnormal{ref}}} \label{eq:directional-growth}
  \end{equation}
  holds for all \({ u \in \X }\) with \({ \norm{ u - u_{\textnormal{ref}}} = 1 }\) and sufficiently large~\({ r > 0 }\).
  Then the growth condition in \Cref{assumptions}~\ref{assumptions:growth} is satisfied.

  In particular, this applies with \({ u_{\textnormal{ref}} = 0 }\)
  and \({\gamma(r) = r^2 }\) in the case where \({ \mathcal{A} }\)
  is a strictly monotone linear operator.
\end{proposition}

The proof of \cref{thm:alternative-growth-conditions} can be found in \cref{sec:alternative-growth-conditions}.
We also note that condition~\eqref{eq:spherical-growth} is superfluous if we can choose~\({ u_{ \textnormal{ref}} = u^\dagger }\);
for details, we refer to \cref{sec:alternative-growth-conditions}.

Observe that~\eqref{eq:directional-growth} is a relatively mild condition, which implies a form of \enquote{weak directional coercivity} of \({ \mathcal{A} }\) relative to the point~\({ u_{ \textnormal{ref}} }\)
in the sense that
\begin{equation} \label{eq:weak-directional-coercivity}
  \dualitypairing*{ \mathcal{A}(u_{\textnormal{ref}} + rv) - \mathcal{A}u_{\textnormal{ref}}}{ v} \xrightarrow[ r \to \infty ]{ } \infty
\end{equation}
for all \({ v \in \X }\) with \({ \norm{ v } = 1 }\).
Since the limit~\eqref{eq:weak-directional-coercivity} is not necessarily uniform over the unit sphere,
that~is, \({ r }\)~may be a function of~\({ v }\)---or there might exist a sequence of \enquote{bad}~\({ u }\)'s for which the last factor
\begin{equation*}
\dualitypairing{ \mathcal{A}u - \mathcal{A}u_{ \textnormal{ref}} }{ u - u_{ \textnormal{ref}} }
\end{equation*}
goes to~\({ 0 }\) sufficiently fast in~\eqref{eq:directional-growth}---this neither leads to coercivity of \({ \mathcal{A}  }\) nor \({ \mathcal{A} + \alpha \partial \mathcal{R} }\) of itself.
The compactness of the sublevel set in \Cref{assumptions}~\ref{assumptions:coercivity} together with strict monotonicity and~\eqref{eq:spherical-growth}, however, ensure that the possibly bad sequence of \({ u }\)'s is not an issue---even when the noise level~\({ \delta }\) and the regularisation parameter~\({ \alpha }\) go to~\({ 0 }\) appropriately.
The interpretation of~\eqref{eq:coercivity-growth} in light of~\eqref{eq:weak-directional-coercivity} might thus be labeled as weak directional coercivity relative to~\({ u^\dagger }\) uniformly over~\({ U_C }\).

\subsection{Well-posedness}

We obtain well-posedness of subgradient-based Lavrentiev regularisation~\eqref{eq:subgradient-lavrentiev-reg} under \Cref{assumptions} in the following manner.
Remember that \({ f^\dagger }\) is the exact data for which \({ \mathcal{A}u^\dagger = f^\dagger }\),
that \({ f^\delta }\) is its noisy counterpart,
and that \({ f }\) refers to any element of~\({ \X^* }\).

\begin{theorem}[Existence] \label{thm:existence}
For every~\({f \in \X^* }\) and~\({\alpha > 0}\),
there exists a unique solution~\({u_\alpha \in \domain \mathcal{R} }\) to the regularised problem~\eqref{eq:subgradient-lavrentiev-reg}.
\end{theorem}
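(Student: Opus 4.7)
My plan is to treat the inclusion as a single maximal monotone equation \({ Tu \ni f }\) with \({ T \coloneqq \mathcal{A} + \alpha \partial \mathcal{R} }\) and invoke the Browder--Minty-type surjectivity theorem with explicit a priori bound promised in \cref{sec:background}. Uniqueness is immediate from strict monotonicity: subtracting two hypothetical solutions \({ u_i }\) with subgradients \({ p_i \in \partial\mathcal{R}(u_i) }\) and pairing with \({ u_1 - u_2 }\) gives
\[
\dualitypairing{\mathcal{A}u_1 - \mathcal{A}u_2}{u_1 - u_2} = -\alpha \dualitypairing{p_1 - p_2}{u_1 - u_2} \leq 0
\]
by monotonicity of \({ \partial\mathcal{R} }\), so \({ u_1 = u_2 }\).

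For existence I first observe that \({ T }\) is maximal monotone: \({ \mathcal{A} }\) is maximal monotone as a monotone, hemicontinuous operator defined on the whole reflexive space \({ \X }\), and the maximal monotone \({ \alpha\partial\mathcal{R} }\) is added via Rockafellar's sum theorem, which applies without further constraint qualification since \({ \domain \mathcal{A} = \X }\). The residual task is to verify the coercivity hypothesis of the Browder--Minty theorem, namely to produce a radius \({ R_\star }\) and an anchor \({ u_0 \in \domain\mathcal{R} }\) such that \({ \dualitypairing{\xi - f}{u - u_0} > 0 }\) whenever \({ \|u\| \geq R_\star }\) and \({ \xi \in Tu }\). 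Writing \({ u = rw }\) with \({ r = \|u\| }\) and \({ \|w\| = 1 }\), applying the subgradient inequality to \({ \alpha\dualitypairing{p}{u - u_0} }\), and using the decomposition
\[
\dualitypairing{\mathcal{A}u}{u - u_0} = \dualitypairing{\mathcal{A}u - \mathcal{A}u_\diamond}{u - u_\diamond} + \dualitypairing{\mathcal{A}u_\diamond}{u - u_\diamond} + \dualitypairing{\mathcal{A}u}{u_\diamond - u_0},
\]
the directional growth~\eqref{eq:directional-growth} turns the principal term into \({ r\gamma(r)\dualitypairing{\mathcal{A}w - \mathcal{A}u_\diamond}{w - u_\diamond} }\), while the spherical growth~\eqref{eq:spherical-growth} keeps the remainder at \({ o(r\gamma(r)) }\).

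The main obstacle is that strict monotonicity of \({ \mathcal{A} }\) only makes \({ \dualitypairing{\mathcal{A}w - \mathcal{A}u_\diamond}{w - u_\diamond} }\) strictly positive for each individual \({ w \neq u_\diamond }\) on the unit sphere, never uniformly, so pointwise coercivity does not automatically upgrade to uniform coercivity. To preclude a \enquote{bad} sequence of directions along which this defect decays faster than \({ 1/\gamma(r) }\), I would argue by contradiction: the same energy estimate bounds \({ \alpha\mathcal{R}(u) }\) in terms of \({ r }\) and \({ r\gamma(r) }\), and combined with convexity of \({ \mathcal{R} }\) anchored at a point of \({ \domain\mathcal{R} }\) this confines the normalised directions \({ w_n = u_n/\|u_n\| }\) to a sublevel set where the compactness hypothesis \cref{assumptions}\eqref{assumptions:coercivity} extracts a strongly convergent subsequence \({ w_n \to w_\star }\) with \({ \|w_\star\| = 1 }\). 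A standard Minty argument --- applied to the bounded sequence \({ \mathcal{A}w_n }\) using that \({ \mathcal{A} }\) is monotone, hemicontinuous and maps bounded sets to bounded sets --- then identifies any weak subsequential limit of \({ \mathcal{A}w_n }\) as \({ \mathcal{A}w_\star }\) and forces \({ \dualitypairing{\mathcal{A}w_\star - \mathcal{A}u_\diamond}{w_\star - u_\diamond} = 0 }\), yielding \({ w_\star = u_\diamond }\) by strict monotonicity, in generic contradiction with \({ \|w_\star\| = 1 }\).
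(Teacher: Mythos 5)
Your proposal is correct and follows essentially the same route as the paper: reduction to Browder's surjectivity theorem via the quantitative coercivity condition, the same three-term decomposition around \({u_\diamond}\) combined with the directional and spherical growth conditions, and the same contradiction argument that uses the \({\mathcal{R}}\)-bound, compactness, demicontinuity, and strict monotonicity to rule out a bad sequence of unit directions (including the rescaling so that \({\norm{u_\diamond} \neq 1}\)). The only cosmetic differences are that you phrase the endgame as forcing \({\dualitypairing{\mathcal{A}w_\star - \mathcal{A}u_\diamond}{w_\star - u_\diamond} = 0}\) and hence \({w_\star = u_\diamond}\), whereas the paper shows this pairing has a strictly positive limit so that the principal term grows superlinearly (logically the same contradiction), and that you invoke a Minty argument with boundedness on bounded sets where demicontinuity of the hemicontinuous monotone \({\mathcal{A}}\) already yields \({\mathcal{A}w_n \rightharpoonup \mathcal{A}w_\star}\) along the strongly convergent subsequence (boundedness on bounded sets is, as the paper notes, generally unavailable for nonlinear monotone operators, though local boundedness would suffice here).
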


\begin{theorem}[Stability] \label{thm:stability}
For every \({ f \in \X^* }\) and fixed~\({\alpha > 0}\),
if \({f_k \to f}\) in~\({ \X^* }\), then
\begin{equation*}
u_{\alpha, k} \to u_\alpha \quad \text{in~\({ \X }\)}, \qquad \mathcal{A}u_{\alpha, k} \rightharpoonup \mathcal{A}u_\alpha \quad \text{weakly in~\({ \X^* }\),} \qquad \text{and} \qquad \mathcal{R}(u_{\alpha, k}) \to \mathcal{R}(u_\alpha),
\end{equation*}
where~\({u_{\alpha, k}}\) and \({ u_\alpha }\) solve~\eqref{eq:subgradient-lavrentiev-reg} with right-hand sides~\({ f_k }\) and~\({ f }\), respectively.
\end{theorem}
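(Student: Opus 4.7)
The plan is to exploit the monotonicity of $\mathcal{A} + \alpha \partial \mathcal{R}$ for a fundamental comparison estimate, use the compactness assumption to extract a strongly convergent subsequence, identify its limit via a Minty-type argument, and finally obtain the two remaining convergences. Writing the regularised inclusions for $u_{\alpha,k}$ and $u_\alpha$, subtracting, pairing with $u_{\alpha,k} - u_\alpha$, and invoking the monotonicity of $\partial \mathcal{R}$ yields the fundamental estimate
\[
\langle \mathcal{A} u_{\alpha,k} - \mathcal{A} u_\alpha,\, u_{\alpha,k} - u_\alpha \rangle \leq \langle f_k - f,\, u_{\alpha,k} - u_\alpha \rangle.
\]
Since $\{f_k\}$ is bounded in $\X^*$, the explicit a~priori bound underlying \cref{thm:existence} (the quantitative Browder--Minty result) yields boundedness of $\{u_{\alpha,k}\}$ in $\X$; inserting any fixed $v \in \domain \mathcal{R}$ into the subgradient inequality and using the affine minorant of $\mathcal{R}$ then bounds $\{\mathcal{R}(u_{\alpha,k})\}$ as well. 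The compactness assumption consequently produces a subsequence $u_{\alpha,k_j} \to u^*$ strongly in~$\X$.

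To identify $u^*$, I start from the variational form $\alpha[\mathcal{R}(v) - \mathcal{R}(u_{\alpha,k})] \geq \langle f_k - \mathcal{A} u_{\alpha,k},\, v - u_{\alpha,k}\rangle$ and, because $\mathcal{A} u_{\alpha,k}$ is not a~priori controllable, swap $\mathcal{A} u_{\alpha,k}$ for $\mathcal{A} v$ using the monotonicity inequality $\langle \mathcal{A} u_{\alpha,k} - \mathcal{A} v, v - u_{\alpha,k} \rangle \leq 0$, yielding
\[
\alpha[\mathcal{R}(v) - \mathcal{R}(u_{\alpha,k})] \geq \langle f_k - \mathcal{A} v,\, v - u_{\alpha,k} \rangle, \qquad v \in \X.
\]
Passing $k_j \to \infty$ using lower semicontinuity of $\mathcal{R}$, strong convergence of $u_{\alpha,k_j}$, and norm convergence of $f_{k_j}$ yields the Minty inequality $\alpha[\mathcal{R}(v) - \mathcal{R}(u^*)] \geq \langle f - \mathcal{A} v,\, v - u^* \rangle$ for all $v \in \X$. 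The standard Minty trick---substituting $v \mapsto (1-t) u^* + tv$ for $t \in (0, 1]$, dividing by $t$, letting $t \to 0^+$, and invoking hemicontinuity so that $\mathcal{A}((1-t)u^* + tv) \rightharpoonup \mathcal{A} u^*$---recovers the subdifferential inclusion $f - \mathcal{A} u^* \in \alpha \partial \mathcal{R}(u^*)$, so uniqueness in \cref{thm:existence} forces $u^* = u_\alpha$. A subsequence argument then upgrades this to strong convergence $u_{\alpha,k} \to u_\alpha$ of the whole sequence.

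For the remaining convergences: monotone hemicontinuous operators defined everywhere on a reflexive Banach space are demicontinuous, so the strong convergence $u_{\alpha,k} \to u_\alpha$ forces $\mathcal{A} u_{\alpha,k} \rightharpoonup \mathcal{A} u_\alpha$ weakly in $\X^*$. Lower semicontinuity of $\mathcal{R}$ supplies $\mathcal{R}(u_\alpha) \leq \liminf_k \mathcal{R}(u_{\alpha,k})$, while taking $v = u_\alpha$ in the subgradient inequality and rearranging gives
\[
\alpha \mathcal{R}(u_{\alpha,k}) \leq \alpha \mathcal{R}(u_\alpha) + \langle f_k - \mathcal{A} u_{\alpha,k},\, u_{\alpha,k} - u_\alpha \rangle,
\]
whose right-hand side tends to $\alpha \mathcal{R}(u_\alpha)$ because $\{f_k - \mathcal{A} u_{\alpha,k}\}$ is bounded in $\X^*$ (by the inclusion and the established boundedness) while $\|u_{\alpha,k} - u_\alpha\| \to 0$. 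Hence $\mathcal{R}(u_{\alpha,k}) \to \mathcal{R}(u_\alpha)$. The principal technical obstacle is precisely the limit passage in the inclusion when only hemicontinuity of $\mathcal{A}$ is available, and the monotonicity substitution above is the standard device designed to circumvent it.
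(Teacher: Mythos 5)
Your argument is correct, and it reaches the conclusion by a genuinely different route than the paper at the key identification step. The paper works with the subgradients: subtracting the two equations and testing with \(u_{\alpha,k}-u_\alpha\), monotonicity of \(\mathcal{A}\) isolates \(\langle \xi_{\alpha,k}-\xi_\alpha,\,u_{\alpha,k}-u_\alpha\rangle \to 0\); after extracting a strong limit \(u\) via the compactness assumption, demicontinuity and this pairing limit give \(\langle \mathcal{A}u_\alpha-\mathcal{A}u,\,u_\alpha-u\rangle=0\), so strict monotonicity forces \(u=u_\alpha\), and the \emph{same} pairing limit also yields \(\limsup_k\mathcal{R}(u_{\alpha,k})\leq\mathcal{R}(u_\alpha)\) without ever needing \(\{\xi_{\alpha,k}\}\) or \(\{\mathcal{A}u_{\alpha,k}\}\) to be bounded. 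You instead keep the complementary half of the identity (dropping the subgradient term by monotonicity of \(\partial\mathcal{R}\)), pass to the Minty-type inequality \(\alpha[\mathcal{R}(v)-\mathcal{R}(u^*)]\geq\langle f-\mathcal{A}v,\,v-u^*\rangle\), and recover the inclusion for \(u^*\) by the line-segment trick together with hemicontinuity and uniqueness from \cref{thm:existence}. Both are valid; your route is the classical graph-closedness argument and needs only monotonicity plus uniqueness, while the paper's is more economical because one pairing limit does double duty for the identification and for the convergence of \(\mathcal{R}\).

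One justification needs repair. In the final step you assert that \(\{f_k-\mathcal{A}u_{\alpha,k}\}=\{\alpha\xi_{\alpha,k}\}\) is bounded \enquote{by the inclusion and the established boundedness} of \(\{u_{\alpha,k}\}\). Boundedness of \(\{u_{\alpha,k}\}\) in \(\X\) does \emph{not} by itself bound \(\partial\mathcal{R}(u_{\alpha,k})\) or \(\mathcal{A}u_{\alpha,k}\); the paper's remark following \cref{thm:convergence} is explicit that controlling the subgradients requires the sequence to sit inside \(\interior(\domain\mathcal{R})\). The claim is nonetheless true at that point in your argument for a different reason: you have already deduced \(\mathcal{A}u_{\alpha,k}\rightharpoonup\mathcal{A}u_\alpha\) from demicontinuity, and weakly convergent sequences are norm-bounded, so \(\{f_k-\mathcal{A}u_{\alpha,k}\}\) is bounded and your estimate \(\alpha\mathcal{R}(u_{\alpha,k})\leq\alpha\mathcal{R}(u_\alpha)+\langle f_k-\mathcal{A}u_{\alpha,k},\,u_{\alpha,k}-u_\alpha\rangle\) does tend to the right limit. (Alternatively, split the pairing as the paper does and use \(\langle\xi_{\alpha,k}-\xi_\alpha,\,u_{\alpha,k}-u_\alpha\rangle\to 0\) together with the weak convergence of \(u_{\alpha,k}\).) With that justification corrected, the proof is complete.
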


\begin{theorem}[Convergence] \label{thm:convergence}
Let \({\alpha = \alpha(\delta)}\) be any a~priori parameter choice for which
\begin{equation} \label{eq:convergence-apriori}
\alpha \to 0 \qquad \text{and} \qquad \frac{ \delta }{ \alpha } \quad \text{is bounded} \qquad \text{as } \delta \to 0,
\end{equation}
and denote by \({u_\alpha^\delta \coloneqq u_{ \alpha(\delta)}^\delta }\) the regularised solutions of~\eqref{eq:subgradient-lavrentiev-reg} with \({ f^\delta \in \X^*}\) satisfying \({\norm{f^{ \delta} - f^\dagger}_{\X^*} \leq \delta}\).
Then
\begin{equation*}
u_\alpha^\delta \to u^\dagger \quad \text{in~\({ \X }\)}, \qquad \mathcal{A}u_\alpha^\delta \rightharpoonup f^\dagger \quad \text{weakly in~\({ \X^* }\),}  \qquad \text{and} \qquad \mathcal{R}(u_\alpha^\delta) \to \mathcal{R}(u^\dagger)
\end{equation*}
as \({ \delta \to 0 }\).
\end{theorem}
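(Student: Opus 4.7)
The plan is to follow the standard three-step strategy for regularisation methods: a~priori bounds, compactness-based extraction of a strongly convergent subsequence, and identification of the limit. Fix a sequence~\({ \delta_n \to 0 }\), let \({ \alpha_n \coloneqq \alpha(\delta_n) }\) and \({ u_n \coloneqq u_{\alpha_n}^{\delta_n} }\), and pick subgradients \({ \xi_n \in \partial \mathcal{R}(u_n) }\) so that \({ \mathcal{A} u_n + \alpha_n \xi_n = f^{\delta_n} }\).

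For the a~priori bounds, pairing the regularised equation with~\({ u_n - u^\dagger }\), using monotonicity of~\({ \mathcal{A} }\) and the subgradient inequality \({ \dualitypairing{ \xi_n }{ u_n - u^\dagger } \geq \mathcal{R}(u_n) - \mathcal{R}(u^\dagger) }\), yields
\begin{equation*}
\mathcal{R}(u_n) \leq \mathcal{R}(u^\dagger) + \frac{ \delta_n }{ \alpha_n } \norm{ u_n - u^\dagger },
\end{equation*}
so it suffices to bound \({ \norm{ u_n } }\). Assume \({ r_n \coloneqq \norm{ u_n } \to +\infty }\) along a subsequence and write \({ u_n = r_n v_n }\) with \({ \norm{ v_n } = 1 }\). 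Pairing the equation with~\({ u_n - u_\diamond }\), rewriting~\({ \alpha_n \xi_n }\) through the equation, and applying the subgradient inequality anchored at~\({ u^\dagger }\) together with a linear minorant of~\({ \mathcal{R} }\) to control \({ -\alpha_n \mathcal{R}(u_n) }\), gives an upper estimate of the form
\begin{equation*}
\dualitypairing{ \mathcal{A} u_n - \mathcal{A} u_\diamond }{ u_n - u_\diamond } \leq C_1 r_n + C_2 + \norm{ \mathcal{A} u_n }_{ \X^* } \norm{ u^\dagger - u_\diamond }.
\end{equation*}
The spherical growth condition~\eqref{eq:spherical-growth} bounds \({ \norm{ \mathcal{A} u_n }_{ \X^* } }\) by \({ \smalloh(1) \cdot r_n \gamma(r_n) }\), while the directional growth condition~\eqref{eq:directional-growth} bounds the left-hand side from below by \({ r_n \gamma(r_n) \dualitypairing{ \mathcal{A} v_n - \mathcal{A} u_\diamond }{ v_n - u_\diamond } }\). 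Dividing through by~\({ r_n \gamma(r_n) }\) and using \({ \gamma(r_n) \to +\infty }\) forces \({ \dualitypairing{ \mathcal{A} v_n - \mathcal{A} u_\diamond }{ v_n - u_\diamond } \to 0 }\). The remaining contradiction with \({ \norm{ v_n } = 1 }\) is extracted by invoking the compactness in \cref{assumptions:coercivity} together with strict monotonicity, so as to exclude the degenerate scenario that~\({ v_n }\) accumulates at~\({ u_\diamond }\).

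Once \({ (u_n, \mathcal{R}(u_n)) }\) is bounded, the compactness of sublevel sets from \cref{assumptions:coercivity} yields a subsequence \({ u_{n_k} \to \hat u }\) strongly in~\({ \X }\). Since any monotone hemicontinuous operator on a reflexive Banach space is demicontinuous, \({ \mathcal{A} u_{n_k} \rightharpoonup \mathcal{A} \hat u }\) weakly in~\({ \X^* }\), so that \({ \alpha_{n_k} \xi_{n_k} = f^{\delta_{n_k}} - \mathcal{A} u_{n_k} \rightharpoonup f - \mathcal{A} \hat u }\). Testing this weak convergence against the strongly convergent sequence \({ u^\dagger - u_{n_k} \to u^\dagger - \hat u }\) in the subgradient inequality \({ \alpha_n \dualitypairing{ \xi_n }{ u^\dagger - u_n } \leq \alpha_n ( \mathcal{R}(u^\dagger) - \mathcal{R}(u_n) ) }\), and exploiting \({ \alpha_{n_k} \mathcal{R}(u_{n_k}) \to 0 }\), gives
\begin{equation*}
\dualitypairing{ \mathcal{A} \hat u - \mathcal{A} u^\dagger }{ \hat u - u^\dagger } \leq 0,
\end{equation*}
which in combination with monotonicity and strict monotonicity of~\({ \mathcal{A} }\) forces \({ \hat u = u^\dagger }\). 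Uniqueness of the limit yields strong convergence of the whole sequence~\({ (u_n) }\) to~\({ u^\dagger }\).

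The convergence \({ \mathcal{R}(u_n) \to \mathcal{R}(u^\dagger) }\) finally follows from the a~priori estimate above (which gives the \({ \limsup }\)-bound) and lower semi-continuity of~\({ \mathcal{R} }\) (which gives the matching \({ \liminf }\)-bound), while the weak convergence \({ \mathcal{A} u_n \rightharpoonup \mathcal{A} u^\dagger }\) was already obtained as a by-product of the identification step. I expect the main obstacle to be the a~priori bound on~\({ \norm{ u_n } }\): since neither~\({ \mathcal{A} }\) nor~\({ \mathcal{A} + \alpha \partial \mathcal{R} }\) is assumed coercive, the spherical growth, directional growth, strict monotonicity, and compactness conditions must be orchestrated jointly to preclude the rescaled sequence \({ v_n = u_n / \norm{ u_n } }\) from drifting toward~\({ u_\diamond }\) in a manner consistent with \({ \dualitypairing{ \mathcal{A} v_n - \mathcal{A} u_\diamond }{ v_n - u_\diamond } \to 0 }\).
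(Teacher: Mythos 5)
Your proposal is correct and reproduces the paper's overall architecture --- the basic monotonicity estimate~\eqref{eq:basic-convergence-estimate}, a uniform bound on the regularised solutions obtained from the growth conditions together with compactness and strict monotonicity, and a Minty-type identification of the limit --- but the boundedness step is organised differently. The paper shows that the quantitative coercivity criterion~\eqref{eq:quantitative-coercivity-subgradient} of \cref{thm:browder} holds at a radius \( r \) that can be chosen independently of \( \delta \) (arguing by contradiction with a sequence of unit vectors \( u_r \) at which the inequality fails), and then reads off \( \norm{u_\alpha^\delta} < r \) from the a~priori estimate built into Browder's theorem. You instead argue directly on the solutions: assuming \( r_n = \norm{u_n} \to +\infty \) you normalise, pair the regularised equation with \( u_n - u_\diamond \), and play~\eqref{eq:spherical-growth} against~\eqref{eq:directional-growth} to force \( \dualitypairing{\mathcal{A}v_n - \mathcal{A}u_\diamond}{v_n - u_\diamond} \to 0 \). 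Both routes end in the same contradiction, and yours avoids re-running the Browder machinery. One detail you should make explicit: the compactness assumption cannot be applied to \( v_n \) itself, since \( \mathcal{R}(v_n) \) need not be bounded; as in the paper's existence proof, it must be applied to the convex combination \( v_n + \tfrac{r_n - 1}{r_n}u^\dagger = \tfrac{1}{r_n}u_n + \bigl(1 - \tfrac{1}{r_n}\bigr)u^\dagger \), whose \( \mathcal{R} \)-value is controlled by convexity together with your a~priori bound \( \mathcal{R}(u_n) \lesssim r_n \), and whose strong limit \( \overline{v} + u^\dagger \) then yields \( v_n \to \overline{v} \) with \( \norm{\overline{v}} = 1 \). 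Combined with the normalisation \( \norm{u_\diamond} \neq 1 \) (achievable by rescaling, as the paper notes) this gives \( \overline{v} \neq u_\diamond \), so that \( \dualitypairing{\mathcal{A}\overline{v} - \mathcal{A}u_\diamond}{\overline{v} - u_\diamond} > 0 \) by strict monotonicity, contradicting the limit above. The identification step and the final \( \limsup \)/\( \liminf \) argument for \( \mathcal{R}(u_\alpha^\delta) \to \mathcal{R}(u^\dagger) \) coincide with the paper's.
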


We wish to draw attention to the unusually weak relationship between \({ \delta }\) and \({ \alpha }\) in \cref{thm:convergence} in that merely boundedness of \({ \delta / \alpha }\) suffices for convergence.
This contrasts results both for Tikhonov-type regularisations
\begin{equation*}
u_\alpha^\delta = \argmin_{ u \in \X } \left( \norm{ \mathcal{A}u - f^\delta }_{  \X^* }^{ p } + \alpha \mathcal{R}(u) \right)
\end{equation*}
for \({ p > 1 }\),
where one has to require that \({ \delta^p / \alpha \to 0 }\) as \({ \alpha(\delta) \to 0 }\);
see for instance~\autocite[Proposition~3.1]{HofYam2010a},
and for standard Lavrentiev regularisation~\eqref{eq:lavrentiev-reg},
in which the condition \({ \delta/ \alpha \to 0 }\) is needed to guarantee convergence~\autocite[Theorem~2.2.4]{AlbRya2006a},~\autocite[Proposition~1]{HofKalRes2016a}.
It should still be noted that one has \emph{weak} convergence in~\({ \X }\) under the parameter choice~\eqref{eq:convergence-apriori} for Lavrentiev regularisation~\autocite[Theorems~2.1.6 and~2.2.6]{AlbRya2006a},
which rationalises our need for a compactness condition in \Cref{assumptions}~\ref{assumptions:coercivity} in order to obtain strong convergence.

\begin{remark}
If~\({ \mathcal{A} }\) is linear,
then weak convergence in the codomain~\({ \X^* }\) in \cref{thm:stability,thm:convergence} always becomes strong convergence,
because linear monotone maps are continuous.

We can also upgrade weak convergence \({ \mathcal{A}u_\alpha^\delta \rightharpoonup \mathcal{A}u^\dagger }\) in \cref{thm:convergence} to strong convergence if \({ u^\dagger }\) is a continuity point of~\({ \mathcal{R} }\)
(which is the case if \({ \mathcal{R} = \tfrac{ 1 }{ 2 } \norm{  }^2 }\) in classical Lavrentiev regularisation in Hilbert spaces).
In this setting, \({ \Set[\big]{ u_\alpha^\delta }{}_\delta^{} }\) lies in the interior of~\({ \domain \mathcal{R} }\),
ignoring a finite number of \({ u_\alpha^\delta }\)'s if necessary.
By the Rockafellar--Veselý theorem~\autocite[Theorem~1]{Roc1969a},
\({ \partial \mathcal{R} }\) is locally bounded in its interior domain---meaning that for every \({ u \in \interior (\domain \mathcal{R}) }\) there exists an \({ \epsilon > 0 }\) such that the image of an \({ \epsilon }\)-ball around~\({ u }\) is bounded.
This implies that \({ \partial \mathcal{R}\bigl( \Set[\big]{ u_\alpha^\delta }{}_\delta^{}\bigr) }\) is bounded by compactness of~\({ \Set[\big]{ u_\alpha^\delta }{}_\delta^{} }\).
In particular, the elements \({ \xi_\alpha^\delta \in \partial \mathcal{R}(u_\alpha^\delta) }\) satisfying \({ \mathcal{A}u_\alpha^\delta + \alpha \xi_\alpha^\delta = f^\delta }\) are uniformly bounded in~\({ \X^* }\) as \({ \delta }\) and \({ \alpha }\) go to~\({ 0 }\),
from which it follows that~\({ \mathcal{A}u_\alpha^\delta \to f^\dagger }\).
\end{remark}

Finally, we mention that it may sometimes be beneficial to consider the modified variant
\begin{equation*}
\mathcal{A}u + \alpha \partial \mathcal{R}(u - u_{ \textnormal{init}} ) \ni f^\delta
\end{equation*}
of the regularisation method~\eqref{eq:subgradient-lavrentiev-reg} in which \({ u_{ \textnormal{init}} \in \X }\) denotes an initial guess.
If we assume that \({ (u^\dagger - u_{ \textnormal{init}} ) \in \domain \mathcal{R} }\),
all the results and proofs carry over to this case,
with the only difference being that
\begin{align*} \SwapAboveDisplaySkip
\mathcal{R}(u_{\alpha, k} - u_{ \textnormal{init}} ) \to \mathcal{R}(u_\alpha - u_{ \textnormal{init}} ) &\qquad \text{replaces} \qquad \mathcal{R}(u_{\alpha, k}) \to \mathcal{R}(u_\alpha)
\intertext{in \cref{thm:stability}, and in \cref{thm:convergence} that}
\mathcal{R}(u_\alpha^\delta - u_{ \textnormal{init}} ) \to \mathcal{R}(u^\dagger - u_{ \textnormal{init}} ) &\qquad \text{replaces} \qquad \mathcal{R}(u_\alpha^\delta) \to \mathcal{R}(u^\dagger).
\end{align*}

\subsection{Convergence rates}

As regards quantitative error estimates for~\eqref{eq:subgradient-lavrentiev-reg},
we use the method of \emph{variational source conditions} established in \autocite{HofKalPosSch2007a,Gra2010a} and generalised in \autocite{HofKalRes2016a} to classical Lavrentiev regularisation.
That is, we consider general variational source conditions of the form
\begin{equation}\label{eq:varineq}
D(u, u^\dagger) \leq \mathcal{R}(u) - \mathcal{R}(u^\dagger) + \varphi\bigl(\dualitypairing{ \mathcal{A}u - \mathcal{A}u^\dagger }{ u - u^\dagger }  \bigr) \qquad \text{for all } u \in \mathcal{M},
\end{equation}
where \({ D \colon \X \times \X \to [0, \infty] }\) is any distance-like function and \({ \varphi \colon [0, \infty) \to [0, \infty) }\) denotes an index map; see \cref{sec:convergence-rates}.
The set \({ \mathcal{M} \subseteq \X }\), typically a ball, must contain all regularised solutions~\({ u_\alpha^\delta }\) of interest for sufficiently small \({ \delta > 0 }\) and \({ \alpha > 0 }\) chosen appropriately.
A~priori we do not assume any further properties other than nonnegativity of~\({ D }\).
This gives the following main result,
where \({ \psi }\) denotes the convex (Fenchel) conjugate of~\({ \varphi^{-1} }\) (defined in \cref{sec:convergence-rates}).
Notationally, we write \({ A \lesssim B }\) when \({ A \leq c B }\) for an independent constant~\({ c > 0 }\),
and \({ A \sim B }\) means that~\({ A \lesssim B \lesssim A }\).

\begin{theorem}[General convergence rates] \label{thm:rates-main}
Assume that the variational source condition~\eqref{eq:varineq} holds and that the ratio~\({ \delta / \alpha }\) is uniformly bounded.
Then there exists a constant~\({ C > 0 }\) such that
\begin{equation*}
D(u_\alpha^\delta, u^\dagger) \leq C \frac{\delta}{\alpha} + \frac{\psi(\alpha)}{\alpha}
\end{equation*}
whenever \({ \delta > 0 }\) and~\({ \alpha > 0 }\) are sufficiently small.
In particular, a parameter choice \({ \alpha \sim \psi^{-1}(\delta) }\) gives the convergence rate
\begin{equation*}
D(u_\alpha^\delta, u^\dagger) \lesssim \frac{ \delta }{ \psi^{-1}(\delta) }
\end{equation*}
as~\({ \delta \to 0 }\).
\end{theorem}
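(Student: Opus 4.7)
The plan is to combine the first-order optimality condition for \({ u_\alpha^\delta }\) with the variational source condition~\eqref{eq:varineq} and close the estimate by Fenchel--Young. Before doing anything else I would invoke \cref{thm:convergence} (which applies since \({ \delta/\alpha }\) is bounded and \({ \alpha \to 0 }\)) to conclude that \({ u_\alpha^\delta \to u^\dagger }\) strongly, so that for all sufficiently small \({ \delta }\) and \({ \alpha }\) the regularised solutions lie in~\({ \mathcal{M} }\) and the quantities \({ \norm{u_\alpha^\delta - u^\dagger} }\) are uniformly bounded by some constant~\({ K }\). The inclusion~\eqref{eq:subgradient-lavrentiev-reg} supplies a subgradient \({ \xi_\alpha^\delta \in \partial \mathcal{R}(u_\alpha^\delta) }\) satisfying \({ \alpha \xi_\alpha^\delta = f^\delta - \mathcal{A}u_\alpha^\delta }\).

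I would then evaluate~\eqref{eq:varineq} at \({ u = u_\alpha^\delta }\) and combine it with the convex subgradient inequality \({ \mathcal{R}(u_\alpha^\delta) - \mathcal{R}(u^\dagger) \leq \dualitypairing{ \xi_\alpha^\delta }{ u_\alpha^\delta - u^\dagger } }\). Substituting the expression for \({ \alpha \xi_\alpha^\delta }\) and isolating the monotonicity quantity \({ t \coloneqq \dualitypairing{ \mathcal{A}u_\alpha^\delta - \mathcal{A}u^\dagger }{ u_\alpha^\delta - u^\dagger } \geq 0 }\) gives
\[
D(u_\alpha^\delta, u^\dagger) \leq \frac{1}{\alpha}\dualitypairing{ f^\delta - \mathcal{A}u^\dagger }{ u_\alpha^\delta - u^\dagger } - \frac{t}{\alpha} + \varphi(t).
\]
The first term is at most \({ K\delta/\alpha }\) by the data-error bound \({ \norm{f^\delta - \mathcal{A}u^\dagger}_{\X^*} \leq \delta }\). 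For the remaining nonlinear piece \({ \varphi(t) - t/\alpha }\) I would apply Fenchel--Young: since \({ \psi = (\varphi^{-1})^* }\), the substitution \({ s = \varphi(t) }\) yields \({ \alpha\varphi(t) - t = \alpha s - \varphi^{-1}(s) \leq \psi(\alpha) }\), hence \({ \varphi(t) - t/\alpha \leq \psi(\alpha)/\alpha }\). Summing the two bounds gives the asserted estimate \({ D(u_\alpha^\delta, u^\dagger) \leq C\delta/\alpha + \psi(\alpha)/\alpha }\) with \({ C = K }\). The stated rate then follows at once: inserting \({ \alpha \sim \psi^{-1}(\delta) }\) makes \({ \psi(\alpha) \sim \delta }\), so both \({ \delta/\alpha }\) and \({ \psi(\alpha)/\alpha }\) are of order \({ \delta/\psi^{-1}(\delta) }\).

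The main obstacle is really bookkeeping around the Fenchel-conjugate step: one needs \({ \varphi^{-1} }\) to be properly defined as a convex function (for instance by extending it as \({ +\infty }\) outside the range of~\({ \varphi }\)) so that \({ \psi }\) and the above Young-type inequality are meaningful in the form used, and one must check that the chosen \({ t \geq 0 }\) is an admissible argument. These conventions are presumably fixed in \cref{sec:convergence-rates}. The only other concern---that \({ u_\alpha^\delta \in \mathcal{M} }\) for small parameters, as required to apply~\eqref{eq:varineq}---is handled directly by the strong convergence from \cref{thm:convergence}.
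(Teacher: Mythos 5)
Your proposal is correct and follows essentially the same route as the paper: the paper also combines the subgradient inequality for \(\xi_\alpha^\delta\) (packaged as the basic stability estimate~\eqref{eq:basic-convergence-estimate}) with the variational source condition, absorbs \(\varphi(t)-t/\alpha\) into \(\psi(\alpha)/\alpha\) via the Fenchel--Young/supremum identity, and bounds \(\norm{u_\alpha^\delta-u^\dagger}\) and secures \(u_\alpha^\delta\in\mathcal{M}\) by appealing to \cref{thm:convergence}. The only cosmetic difference is that the paper notes weak convergence plus reflexivity already suffices for the uniform norm bound, whereas you invoke strong convergence; both work.
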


In practice, one considers distance measures~\({ D }\) like the Bregman distance with respect to the functional~\({ \mathcal{R} }\) or powers of the norm on~\({ \X }\).
Specialising to the latter case and with \({ \varphi }\) being of Hölder type in~\eqref{eq:varineq},
the estimates and convergence rates can be further improved, leading to the following result.

\enlargethispage{\baselineskip} 

\begin{theorem}\label{thm:rates-norm}
Assume that the Hölder-type variational source condition
\begin{equation}\label{eq:varineq-norm}
c_1\norm{u-u^\dagger}^r \leq \mathcal{R}(u) - \mathcal{R}(u^\dagger) + c_2\left(\dualitypairing{ \mathcal{A}u - \mathcal{A}u^\dagger }{ u - u^\dagger }  \right)^{1/p}
\end{equation}
holds for some \({ r > 1 }\), \({ p > 1 }\), and \({ c_1, c_2 > 0 }\).
Then the parameter choice
\begin{equation*}
  \alpha \sim \delta \tothepowerof \left(\frac{r(p - 1)}{rp - 1}\right)
\end{equation*}
gives the convergence rate
\begin{equation*}
\norm{u_\alpha^\delta-u^\dagger} \lesssim \delta \tothepowerof \left( \frac{ 1 }{ rp - 1 } \right)
\end{equation*}
as~\({ \delta \to 0 }\).
\end{theorem}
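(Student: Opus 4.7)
My plan is to refine the computation underlying Theorem~\ref{thm:rates-main} rather than invoke it as a black box, since the explicit power-of-norm structure on the left of~\eqref{eq:varineq-norm} admits a sharper Young-type balancing than the general version. (Indeed, substituting $D = c_1 \norm{\cdot - u^\dagger}^r$ and $\varphi(t) = c_2 t^{1/p}$ directly into Theorem~\ref{thm:rates-main} would only give the weaker rate $\delta^{1/(rp)}$, so some genuine sharpening is required.)

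First I would derive a basic inequality from the inclusion~\eqref{eq:subgradient-lavrentiev-reg}. Writing $\mathcal{A}u_\alpha^\delta + \alpha \xi_\alpha^\delta = f^\delta$ for some $\xi_\alpha^\delta \in \partial\mathcal{R}(u_\alpha^\delta)$, pairing with $u_\alpha^\delta - u^\dagger$, and using the subgradient inequality $\dualitypairing{\xi_\alpha^\delta}{u_\alpha^\delta - u^\dagger} \geq \mathcal{R}(u_\alpha^\delta) - \mathcal{R}(u^\dagger)$ together with $\norm{f^\delta - \mathcal{A}u^\dagger}_{\X^*} \leq \delta$ yields, with the shorthand $s \coloneqq \norm{u_\alpha^\delta - u^\dagger}$ and $t \coloneqq \dualitypairing{\mathcal{A}u_\alpha^\delta - \mathcal{A}u^\dagger}{u_\alpha^\delta - u^\dagger} \geq 0$,
\begin{equation*}
t + \alpha \bigl[\mathcal{R}(u_\alpha^\delta) - \mathcal{R}(u^\dagger)\bigr] \leq \delta s.
\end{equation*}
Inserting the source condition~\eqref{eq:varineq-norm} to bound $\mathcal{R}(u^\dagger) - \mathcal{R}(u_\alpha^\delta)$ from above gives
\begin{equation*}
t + \alpha c_1 s^r \leq \delta s + \alpha c_2 t^{1/p}.
\end{equation*}

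Next, I would apply Young's inequality twice. With exponent pair $(p, p/(p-1))$ one absorbs the Hölder term as $\alpha c_2 t^{1/p} \leq \tfrac{1}{2}t + C_1 \alpha^{p/(p-1)}$; discarding the nonnegative $\tfrac{1}{2}t$ on the left leaves $\alpha c_1 s^r \leq \delta s + C_1 \alpha^{p/(p-1)}$. A second Young application with exponents $(r, r/(r-1))$, namely $\delta s \leq \tfrac{c_1}{2}\alpha s^r + C_2\,\delta^{r/(r-1)}\alpha^{-1/(r-1)}$, absorbs the noise term into $\alpha c_1 s^r$ and produces
\begin{equation*}
s^r \lesssim (\delta/\alpha)^{r/(r-1)} + \alpha^{1/(p-1)}.
\end{equation*}
Balancing the two summands by $(\delta/\alpha)^{r/(r-1)} \sim \alpha^{1/(p-1)}$ forces $\alpha \sim \delta^{r(p-1)/(rp-1)}$, and resubstituting yields $s \lesssim \delta^{1/(rp-1)}$, as claimed.

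The main technical point is to recognise the \emph{right} exponent pair in the second Young step: matching $\delta s$ directly against the full $\alpha s^r$ (rather than against a single power of the distance $D$, which is all Theorem~\ref{thm:rates-main} sees) replaces the linear factor $\delta/\alpha$ by $(\delta/\alpha)^{r/(r-1)}$ in the final estimate, and this is exactly what sharpens the noise exponent from $1/(rp)$ to $1/(rp-1)$. A minor prerequisite is that the source condition is applicable at $u_\alpha^\delta$ for small $\delta$ along the chosen parameter regime, which follows from Theorem~\ref{thm:convergence} since $\delta/\alpha \sim \delta^{(r-1)/(rp-1)} \to 0$ under $\alpha \sim \delta^{r(p-1)/(rp-1)}$.
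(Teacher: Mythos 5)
Your proposal is correct and follows essentially the same route as the paper: your first Young step (absorbing $\alpha c_2 t^{1/p}$ into $t$) just re-derives the paper's intermediate estimate $c_1\norm{u_\alpha^\delta-u^\dagger}^r \leq \tfrac{\delta}{\alpha}\norm{u_\alpha^\delta-u^\dagger} + c_3\alpha^{q-1}$, and your second Young step with exponents $\bigl(r, r/(r-1)\bigr)$ applied to the noise term is exactly the paper's sharpening, leading to the same balance $\alpha \sim \delta^{r(p-1)/(rp-1)}$. Your opening observation — that invoking \cref{thm:rates-main} as a black box would only yield $\delta^{1/(rp)}$ — is also accurate and is precisely why the paper treats the norm case separately.
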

In particular, when \({ r = 2 = p }\) in~\eqref{eq:varineq-norm}, we deduce from the variational source condition
\begin{equation*}
\norm{u-u^\dagger}^2 \lesssim \mathcal{R}(u) - \mathcal{R}(u^\dagger) + \bigl( \dualitypairing{ \mathcal{A}u - \mathcal{A}u^\dagger }{ u - u^\dagger }  \bigr)^{1/2}
\end{equation*}
a convergence rate
\begin{equation*}
\norm{u_\alpha^\delta - u^\dagger} \lesssim \delta^{1/3} \qquad \text{if } \alpha \sim \delta^{2/3},
\end{equation*}
which is in line with expected results for Lavrentiev regularisation~\autocite[Theorem~5]{HofKalRes2016a}.
In fact, \cref{thm:rates-norm} generalises the Hölder rates in~\autocite[Section~3.3]{HofKalRes2016a} from \({ r = 2 }\) to~\({ r > 1 }\),
and it should be noted that all the convergence-rates results in our work applies to the classical case \({ \mathcal{R} = \tfrac{ 1 }{ 2 } \norm{  }^{ 2 }  }\) as well.
We also point out that if \({ \mathcal{A} }\) is positively homogeneous---for example,
if \({ \mathcal{A} }\) is linear---then \cref{thm:rates-norm} is realistic only for~\({ p \geq 2 }\);
see the discussion in \cref{rem:rates}.

\section{Functional-analytic background}
\label{sec:background}

We refer to~\autocite{Sho1997a,Zei1990a,AlbRya2006a,BauCom2017a,Bar2010a} for in-depth analysis and extensions of the theory presented freely in this section,
noting that \autocite{AlbRya2006a}~thoroughly covers regularisation of ill-posed problems and that~\autocite{BauCom2017a} focuses on the Hilbert-space setting.

Our main tool in the study of~\eqref{eq:subgradient-lavrentiev-reg} is the fundamental theorem in monotone operator theory.

\begin{theorem}[Browder--Minty~\protect{\autocite{Bro1968a}}] \label{thm:browder-minty}
Maximally monotone, coercive operators \({ \X \rightrightarrows \X^* }\) are surjective.
\end{theorem}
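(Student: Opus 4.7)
The plan is to establish surjectivity by a perturbation-and-limit argument. Fix $f \in \X^*$; by replacing $\mathcal{A}$ with the shifted operator $u \mapsto \mathcal{A}u - f$ (which remains maximally monotone, and whose coercivity is preserved after absorbing $f$ into the estimate), it suffices to produce $u$ with $0 \in \mathcal{A}u$.

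For each $\varepsilon > 0$, I would consider the regularised inclusion
\begin{equation*}
0 \in \mathcal{A}u + \varepsilon J u,
\end{equation*}
where $J \colon \X \rightrightarrows \X^*$ is the normalised duality mapping. The operator $\mathcal{A} + \varepsilon J$ is maximally monotone because $J$ is maximally monotone with $\domain J = \X$, and it satisfies the strong coercivity estimate $\dualitypairing{\varepsilon J u}{u} = \varepsilon \norm{u}^{2}$. In a reflexive Banach space the range of any maximally monotone operator plus a positive multiple of the duality mapping equals all of $\X^*$, which is the classical characterisation of maximality due to Rockafellar; this in turn is typically proven by a Galerkin construction on an exhausting sequence of finite-dimensional subspaces, where Brouwer's fixed-point theorem supplies finite-dimensional approximants and Minty's monotonicity trick identifies their weak limit. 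Applying this to $\mathcal{A} + \varepsilon J$ produces, for every $\varepsilon > 0$, elements $u_\varepsilon \in \X$ and $v_\varepsilon \in \mathcal{A}u_\varepsilon$ with $v_\varepsilon + \varepsilon J u_\varepsilon = 0$.

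It remains to pass to the limit $\varepsilon \to 0$. Pairing with $u_\varepsilon$ yields $\dualitypairing{v_\varepsilon}{u_\varepsilon} = -\varepsilon \norm{u_\varepsilon}^{2} \leq 0$, and coercivity of $\mathcal{A}$ therefore forces $\{u_\varepsilon\}$ to remain bounded. Reflexivity then provides a subsequence $u_{\varepsilon_n} \rightharpoonup u$ in $\X$, while $\norm{v_{\varepsilon_n}}_{\X^*} = \varepsilon_n \norm{u_{\varepsilon_n}} \to 0$ strongly. For any pair $(w, w^*) \in \operatorname{graph} \mathcal{A}$, monotonicity gives $\dualitypairing{v_{\varepsilon_n} - w^*}{u_{\varepsilon_n} - w} \geq 0$; combining strong convergence of $v_{\varepsilon_n}$ with weak convergence of $u_{\varepsilon_n}$ and letting $n \to \infty$ yields $\dualitypairing{-w^*}{u - w} \geq 0$. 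Maximality of $\mathcal{A}$ forces $(u, 0) \in \operatorname{graph} \mathcal{A}$, that is, $0 \in \mathcal{A}u$.

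The main obstacle is establishing Rockafellar's surjectivity lemma in the set-valued setting, since Brouwer's theorem does not directly apply to multivalued maps. The standard workaround passes through the Yosida approximation $\mathcal{A}_\lambda$---single-valued, monotone, and Lipschitz---to solve $\mathcal{A}_\lambda u + \varepsilon J u = 0$ at the finite-dimensional level, and then sends $\lambda \to 0$ with $\varepsilon$ fixed before sending $\varepsilon \to 0$. The remaining steps---tracking the coercivity constants after the initial translation and justifying the strong--weak passage in the duality pairing---are routine once reflexivity and maximality are in hand.
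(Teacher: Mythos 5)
Your proposal is correct and follows essentially the same route as the paper: regularise with \( \varepsilon J \), invoke the Minty--Rockafellar surjectivity of \( \mathcal{A} + \varepsilon J \), extract a uniform bound from coercivity and the sign of \( \dualitypairing{ v_\varepsilon }{ u_\varepsilon } \), pass to a weak limit by reflexivity, and conclude via monotonicity plus maximality. The only cosmetic difference is that you normalise to \( f = 0 \) by shifting the operator, whereas the paper keeps \( f \) explicit (and works with coercivity relative to a point \( u_{\textnormal{c}} \)); both treat the Minty--Rockafellar lemma as a citable black box.
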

This result generalises the fact that continuous functions on~\({ \R }\) which are monotone and coercive, also are surjective.
A~multivalued map \({ \mathcal{A} \colon \X \rightrightarrows \X^* }\) is \emph{monotone} on its domain \({ \domain \mathcal{A} \coloneqq \Set{ u \in \X \given \mathcal{A}u \neq \emptyset} }\) if
\begin{equation} \label{eq:monotone-operator}
\dualitypairing{ f - g }{ u - v } \geq 0
\end{equation}
for all \({ u, v \in \domain \mathcal{A} }\) and~\({ f \in \mathcal{A}u}\) and~\({ g \in \mathcal{A}v }\),
where we usually write \({ \mathcal{A}u }\) instead of \({ \mathcal{A}(u) }\) even though \({ \mathcal{A} }\) may be nonlinear.
In particular, a~single-valued operator is monotone if \({ \dualitypairing{ \mathcal{A}u - \mathcal{A}v }{ u - v } \geq 0
 }\) for all \({ u, v \in \domain \mathcal{A} \coloneqq \Set{ u \in \X \given \mathcal{A}u \in \X^* } }\).
Moreover, \({ \mathcal{A} }\) is \emph{strictly monotone} if equality in~\eqref{eq:monotone-operator} only holds when \({ u = v }\),
in which case it follows that \({ \mathcal{A} }\) is injective on this set.
\({ \mathcal{A} }\)~is further said to be \emph{maximally monotone} if it has no proper monotone extension---that is,
if \({ \dualitypairing{ f - g }{ u - v } \geq 0 }\) for all~\({ (v, g) \in \mathcal{A} }\),
then~\({ (u, f) \in \mathcal{A} }\), where we have identified \({ \mathcal{A} }\) with its graph.

Of particular interest is the \emph{subdifferential} \({ \partial \mathcal{R} \colon \X \rightrightarrows \X^* }\) of a convex function \({ \mathcal{R} \colon \X \to (-\infty, \infty] }\) defined by
\begin{equation*}
\xi \in \partial \mathcal{R}(u) \quad \Longleftrightarrow \quad \mathcal{R}(v) - \mathcal{R}(u) \geq \dualitypairing{ \xi }{ v - u } \quad \text{for all } v \in \X,
\end{equation*}
which is readily seen to be monotone.
If~\({ \mathcal{R} }\) additionally is lower semicontinuous and proper,
meaning that \({ \domain \mathcal{R} \coloneqq \Set{ u \in \X \given \mathcal{R}(u) < \infty } }\) is nonempty,
then the Moreau--Rockafellar theorem~\autocite[Theorem~1]{Ras2021a} shows that \({ \partial \mathcal{R} }\) is maximally monotone and proper.
A~special example is the \emph{normalised duality mapping}
\begin{equation} \label{eq:norm-duality-mapping}
J(u) \coloneqq \partial \bigl( \tfrac{ 1 }{ 2 }\norm{  }^2 \bigr) (u) = \Set*{ f \in \X^* \given \norm{ f }_{ \X^* }^{ 2 } = \dualitypairing{ f }{ u } = \norm{ u }^{ 2 }  },
\end{equation}
which is single-valued if \({ \X^* }\) is strictly convex~\autocite[Lemma~1.5.5]{AlbRya2006a}---in particular, it collapses to the identity in Hilbert spaces---and represents the regulariser in classical Lavrentiev regularisation.

Maximal monotonicity also holds for hemicontinuous monotone \({ \mathcal{A} \colon \X \to \X^* }\), where \emph{hemicontinuity} is a type of weak directional continuity in the sense that
\begin{equation*}
\dualitypairing{ \mathcal{A}(u + \epsilon v) }{ w } \xrightarrow[ \epsilon \to 0^+ ]{  } \dualitypairing{ \mathcal{A}u }{ w } 
\end{equation*}
for all~\({ u, v, w \in \X }\).
Every strong-to-weak continuous (\emph{demicontinuous}) operator---meaning that \({ u_k \to u }\) in~\({ \X }\) implies \({ \mathcal{A}u_k \rightharpoonup \mathcal{A}u }\) weakly in~\({ \X^* }\)---is hemicontinuous,
and in fact, for monotone \({ \X \to \X^* }\) both hemicontinuity, demicontinuity, and maximal monotonicity are equivalent.	
In the special case of linear~\({ \mathcal{A} }\), hemicontinuity is automatic,
and paired with monotonicity one even obtains continuity.

Monotonicity is further preserved for sums \({ \mathcal{A} + \mathcal{B} }\),
and strictly so provided at least one map is strictly monotone.
For maximally monotone operators the sum rule is more delicate,
but is at least true when one of the terms is a subgradient.
In particular, this leads to maximal monotonicity of \({ \widetilde{ \mathcal{A} } \coloneqq \mathcal{A} + \alpha \partial \mathcal{R} }\) for all~\({ \alpha > 0 }\) in~\eqref{eq:subgradient-lavrentiev-reg} under \Cref{assumptions}.

As for coercivity, we shall need a localised, quantitative form of \cref{thm:browder-minty}.
On the one hand, this allows for less restrictive assumptions on the forward operator~\({ \mathcal{A} }\),
while on the other hand, we gain an a~priori estimate of the size of the regularised solutions
which is crucial in the proofs of stability and convergence of~\eqref{eq:subgradient-lavrentiev-reg}.
Specifically, whereas the standard criterion
\begin{equation*}
\frac{ \dualitypairing{ y }{ u }  }{ \norm{ u } } \to \infty \qquad \text{for all} \quad (u, y) \in \widetilde{ \mathcal{A} } \quad \text{as} \quad  \norm{ u } \to \infty
\end{equation*}
is \emph{uniform} over all data \({ f \in \X^* }\) and centred at the origin,
we shall instead assume the existence of a point~\({ u_{ \textnormal{coerc.}} }\) and an \({ r > 0 }\) (depending on~\({ f }\)) such that
\begin{equation} \label{eq:quantitative-coercivity}
\dualitypairing{ y - f}{ u - u_{ \textnormal{coerc.}} } \geq  0 \qquad \text{for all} \quad (u, y) \in \widetilde{ \mathcal{A} } \quad \text{with} \quad \norm{ u - u_{ \textnormal{coerc.}} } \geq r.
\end{equation}
A~priori we may then conclude that every solution~\({ u_\star }\) of \({ \widetilde{ \mathcal{A} }u \ni f }\) satisfies \({ \norm{ u_\star - u_{ \textnormal{coerc.}} } \leq r }\) (see~\autocite[Theorem~1.7.9]{AlbRya2006a}),
which quantitatively relates the solutions and the data.

In fact, \cref{thm:browder-minty} adapted to~\eqref{eq:quantitative-coercivity} can now be quickly proved as follows~\autocite[along the lines of the proof of Theorem~1.7.5]{AlbRya2006a},
where we set \({ u_{ \textnormal{coerc.}} = 0 }\) for clarity.
By Minty--Rockafellar's theorem (see~\autocite[Theorem~6]{Ras2021a} for a very short argument),
stating that \({ \widetilde{ \mathcal{A} } + \lambda J }\) is surjective for every~\({ \lambda > 0 }\),
there exist \({ (u_\lambda, y_\lambda) \in \widetilde{ \mathcal{A} } }\) and \({ j_\lambda \in Ju_\lambda }\) satisfying \({ y_\lambda + \lambda j_\lambda = f }\).
Then
\begin{equation*}
\dualitypairing{ y_\lambda - f }{ u_\lambda } = - \lambda \dualitypairing{ j_\lambda }{ u_\lambda } = - \lambda \norm{ u_\lambda }^{ 2 } \leq 0
\end{equation*}
for every \({ \lambda > 0 }\).
If \({ u_\lambda = 0 }\) for some~\({ \lambda }\),
then \({ j_\lambda = 0 }\),
and so \({ y_\lambda = f }\) and \({ u_\star = 0 }\) is a solution.
Otherwise, we must have
\begin{equation*}
\dualitypairing{ y_\lambda - f }{ u_\lambda } < 0
\end{equation*}
for all \({ \lambda }\), 
from which~\eqref{eq:quantitative-coercivity} tells us that \({ \norm{ u_\lambda } < r }\) for all~\({ \lambda }\).
By reflexivity and up to a subsequence,
\({ \Set{ u_\lambda }_\lambda }\) therefore converges weakly in~\({ \X }\) to some~\({ u_\star }\) as~\({ \lambda \to 0 }\),
and since
\begin{equation*}
\dualitypairing{ f - \lambda j_\lambda - y }{ u_\lambda - u } = \dualitypairing{ y_\lambda - y }{ u_\lambda - u } \geq 0
\end{equation*}
for all \({ (u, y) \in \widetilde{ \mathcal{A} } }\) by monotonicity,
we may take limits as \({ \lambda \to 0 }\) and deduce that \({ \dualitypairing{ f - y }{ u_\star - u } \geq 0 }\),
using that~\({ J }\) is bounded.
Now maximal monotonicity gives \({ (u_\star, f ) \in \widetilde{ \mathcal{A} } }\),
and \({ \norm{ u_\star } \leq \liminf \norm{ u_\lambda } \leq r }\).

This yields an alternative route to the following result,
where we note that~\eqref{eq:quantitative-coercivity-subgradient} is a slightly more restrictive but apparently easier variant of~\eqref{eq:quantitative-coercivity} using the definition of~\({ \partial \mathcal{R}(u) }\).

\begin{theorem}[Browder~\protect{\autocite{Bro1966a}}] \label{thm:browder}
Let~\({ \mathcal{A} \colon \X \to \X^* }\) be maximally monotone and \({ \mathcal{R} \colon\X \to (- \infty, \infty] }\) be proper, convex, and lower semicontinuous.
Suppose that for a given \({ f \in \X^* }\),
there exist \({ u_{ \textnormal{coerc.}} \in \domain \mathcal{R} }\) and \({ r > 0 }\) such that
\begin{equation} \label{eq:quantitative-coercivity-subgradient}
\dualitypairing{\mathcal{A}u - f}{u - u_{ \textnormal{coerc.}}} + \mathcal{R}(u) - \mathcal{R}(u_{ \textnormal{coerc.}}) \geq 0
\end{equation}
for all \({ u \in \X }\) with~\({\norm{u-u_{\textnormal{coerc.}}} \geq r }\).
Then there exists \({ u_\star \in \domain \mathcal{R} }\) satisfying \({ \norm{ u_\star - u_{\textnormal{coerc.}} } \leq r }\) and
\begin{equation*}
\mathcal{A}u_\star + \partial \mathcal{R}(u_\star) \ni f.
\end{equation*}
\end{theorem}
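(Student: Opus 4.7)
The plan is to apply the quantitative Browder--Minty surjectivity argument sketched just before the theorem to the sum operator \({ \widetilde{\mathcal{A}} \coloneqq \mathcal{A} + \partial \mathcal{R} }\), with \({ u_{\textnormal{c}} }\) playing the role of the origin.

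First I would record that \({ \widetilde{\mathcal{A}} }\) is maximally monotone. By Moreau--Rockafellar, \({ \partial \mathcal{R} }\) is maximally monotone, and since the hypothesised maximal monotonicity of \({ \mathcal{A} }\) comes with \({ \interior(\domain \mathcal{A}) = \X }\), which trivially meets \({ \domain(\partial \mathcal{R}) }\), the sum rule recalled in \cref{sec:background} applies. Next I would translate the pointwise inequality \eqref{eq:quantitative-coercivity-subgradient} into the operator-level form \eqref{eq:quantitative-coercivity}: writing any \({ (u, y) \in \widetilde{\mathcal{A}} }\) as \({ y = \mathcal{A}u + \xi }\) with \({ \xi \in \partial \mathcal{R}(u) }\) and invoking the subgradient inequality \({ \dualitypairing{\xi}{u - u_{\textnormal{c}}} \geq \mathcal{R}(u) - \mathcal{R}(u_{\textnormal{c}}) }\) yields
\begin{equation*}
\dualitypairing{ y - f }{ u - u_{\textnormal{c}} } \geq \dualitypairing{ \mathcal{A}u - f }{ u - u_{\textnormal{c}} } + \mathcal{R}(u) - \mathcal{R}(u_{\textnormal{c}}) > 0
\end{equation*}
whenever \({ \norm{u} = r }\).

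Then I would run the argument from the excerpt, with the only change being that the penalty is the subdifferential of \({ \tfrac{1}{2} \norm{ \cdot - u_{\textnormal{c}} }^{ 2 } }\), namely \({ J(\cdot - u_{\textnormal{c}}) }\), in place of \({ J }\). Minty--Rockafellar's surjectivity theorem produces, for every \({ \alpha > 0 }\), a pair \({ (u_\alpha, y_\alpha) \in \widetilde{\mathcal{A}} }\) and \({ j_\alpha \in J(u_\alpha - u_{\textnormal{c}}) }\) with \({ y_\alpha + \alpha j_\alpha = f }\). Pairing with \({ u_\alpha - u_{\textnormal{c}} }\) gives \({ \dualitypairing{y_\alpha - f}{u_\alpha - u_{\textnormal{c}}} = -\alpha \norm{u_\alpha - u_{\textnormal{c}}}^{2} \leq 0 }\), which together with the previous step forces \({ \norm{u_\alpha} < r }\) for every \({ \alpha > 0 }\). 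Reflexivity extracts a weak cluster point \({ u_\star }\), and because \({ J }\) is bounded on bounded sets the term \({ \alpha j_\alpha }\) vanishes in norm as \({ \alpha \to 0 }\); passing to the limit in the monotonicity inequality \({ \dualitypairing{f - \alpha j_\alpha - y}{u_\alpha - u} \geq 0 }\) yields \({ \dualitypairing{f - y}{u_\star - u} \geq 0 }\) for all \({ (u, y) \in \widetilde{\mathcal{A}} }\), whence maximal monotonicity of \({ \widetilde{\mathcal{A}} }\) delivers \({ (u_\star, f) \in \widetilde{\mathcal{A}} }\) and in particular \({ u_\star \in \domain \mathcal{R} }\).

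It remains to upgrade \({ \norm{u_\star} \leq r }\) (from weak lower semicontinuity of the norm) to the strict bound \({ \norm{u_\star} < r }\). If equality held, then \({ f \in \mathcal{A}u_\star + \partial \mathcal{R}(u_\star) }\) combined with the subgradient inequality would make the left-hand side of \eqref{eq:quantitative-coercivity-subgradient} at \({ u_\star }\) nonpositive, contradicting the hypothesis. I expect the only real care to be needed in verifying the sum rule for maximal monotonicity and in the bookkeeping around the shifted duality mapping \({ J(\cdot - u_{\textnormal{c}}) }\); the substantive analytic content is carried entirely by the maximal-monotonicity, subdifferential, and surjectivity results quoted from \cref{sec:background}.
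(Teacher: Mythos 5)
Your route is precisely the one the paper sketches: it does not prove \cref{thm:browder} in detail but cites Browder and reduces it to the quantitative Browder--Minty argument for \(\widetilde{\mathcal{A}} = \mathcal{A} + \partial\mathcal{R}\), and your verification of the sum rule, the passage from \eqref{eq:quantitative-coercivity-subgradient} to \eqref{eq:quantitative-coercivity} via the subgradient inequality, the Minty--Rockafellar regularisation with the shifted duality mapping, and the final limit argument all fill in the intended details correctly.

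There is, however, one step that does not go through as written. From \(\dualitypairing{y_\alpha - f}{u_\alpha - u_{\textnormal{c}}} \leq 0\) and positivity of \(\dualitypairing{y - f}{u - u_{\textnormal{c}}}\) established only on the sphere \(\norm{u} = r\), you may conclude \(\norm{u_\alpha} \neq r\) but not \(\norm{u_\alpha} < r\); without excluding \(\norm{u_\alpha} > r\) there is no bounded family from which to extract a weak cluster point. The paper asserts (without proof) that the sphere condition is equivalent to the same condition on \(\norm{u} \geq r\), and for \(\widetilde{\mathcal{A}} = \mathcal{A} + \partial\mathcal{R}\) this is exactly where the hypothesis \eqref{eq:quantitative-coercivity-subgradient} earns its keep: given \((u, y) \in \widetilde{\mathcal{A}}\) with \(\norm{u} > r\), write \(y = \mathcal{A}u + \xi\) and choose \(t \in (0,1)\) so that \(v \coloneqq u_{\textnormal{c}} + t(u - u_{\textnormal{c}})\) lies on the sphere. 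Then \(v \in \domain\mathcal{R}\) by convexity; monotonicity of \(\mathcal{A}\) applied to the pair \(u, v\) gives \(\dualitypairing{\mathcal{A}u - f}{u - u_{\textnormal{c}}} \geq t^{-1}\dualitypairing{\mathcal{A}v - f}{v - u_{\textnormal{c}}}\); convexity of \(\mathcal{R}\) gives \(\mathcal{R}(u_{\textnormal{c}}) - \mathcal{R}(v) \geq t\bigl(\mathcal{R}(u_{\textnormal{c}}) - \mathcal{R}(u)\bigr)\); and combining these with \eqref{eq:quantitative-coercivity-subgradient} at \(v\) and with \(\dualitypairing{\xi}{u - u_{\textnormal{c}}} \geq \mathcal{R}(u) - \mathcal{R}(u_{\textnormal{c}})\) yields \(\dualitypairing{y - f}{u - u_{\textnormal{c}}} > 0\). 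Note that this argument (and the theorem itself) requires \(\norm{u_{\textnormal{c}}} < r\) so that the segment from \(u_{\textnormal{c}}\) to \(u\) actually meets the sphere; that assumption is implicit in the statement and should be recorded where you invoke it.
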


\section{Proof of well-posedness}
\label{sec:wellposedness}

We now establish existence, stability, and convergence for subgradient-based Lavrentiev regularisation~\eqref{eq:subgradient-lavrentiev-reg} under \Cref{assumptions} with help of~\cref{thm:browder}. Without loss of generality we assume in this section that~\({ \mathcal{R}(u^\dagger) = 0 }\).
Recall also that \({ f^\dagger }\) is the exact data for which \({ \mathcal{A}u^\dagger = f^\dagger }\),
that \({ f^\delta }\) is its noisy counterpart,
and that \({ f }\) refers to any element of~\({ \X^* }\).

\begin{remark}
In order to obtain convergence,
it seems necessary to prove coercivity of \({ \mathcal{A} + \alpha \partial \mathcal{R} }\) relative to the exact solution~\({ u^\dagger }\) for the monotonicity arguments to work.
This is not an issue for the existence of regularised solutions for \emph{fixed}~\({ \alpha }\),
but we choose \({ u_{ \textnormal{coerc.}} = u^\dagger }\) also in this case.
Observe further that there is no direct relationship between \({ u_{ \textnormal{coerc.}} }\) in \cref{thm:browder} and \({ u_{ \textnormal{ref}} }\) from \cref{thm:alternative-growth-conditions}; \({ u_{ \textnormal{coerc.}} }\) is a point where the combination of \({ \mathcal{A} }\) and \({ \partial \mathcal{R} }\) is coercive,
whereas \({ u_{ \textnormal{ref}} }\) relates to \({ \mathcal{A} }\) alone.
\end{remark}

\subsection{Existence---proof of \cref{thm:existence}}

We apply \cref{thm:browder}, where it remains to establish~\eqref{eq:quantitative-coercivity-subgradient}.
We choose in the following \({ u_{ \textnormal{coerc.}} = u^\dagger }\).
Moreover, given \({ u \in \X }\) we denote
\[
  u_r \coloneqq ru + (1-r)u^\dagger = u^\dagger + r(u-u^\dagger).
\]
Then~\eqref{eq:quantitative-coercivity-subgradient} is equivalent to the existence of an \({ r_0 > 0 }\) such that
\begin{equation}\label{eq:existence_toshow}
  \dualitypairing*{ \mathcal{A}u_r - f }{ r(u - u^\dagger) } + \alpha \mathcal{R}(u_r) \geq 0
\end{equation}
for all \({u \in \X}\) with \({\norm{u-u^\dagger} = t_0}\) and \({r > r_0}\).
For simplicity, we assume in the following that \({t_0 = 1}\).

Define now
\begin{equation}\label{eq:regulariser-contradiction-bnd}
  C \coloneqq \frac{\norm{f^\dagger - f}_{ \X^* }}{\alpha}
\end{equation}
and let \({V \coloneqq \Set[\big]{u \in \X \given \norm{u} + \mathcal{R}(u) \leq C+\norm{u^\dagger} + 1}}\).
By \Cref{assumptions}~\ref{assumptions:coercivity}, the set \({V}\) is compact,
and thus the lower semicontinuity of \({\mathcal{R}}\) implies that
\({\mathcal{R}}\) is bounded below on~\({V}\), say, \({\mathcal{R}(u) \geq -K}\) for all~\({u \in V}\).
Next we define
\[
  r_0 \coloneqq \sup\Set[\Big]{r > 0 \given \inf_{u \in U_{C}} \dualitypairing*{\mathcal{A}u_r}{u-u^\dagger}
    \leq \alpha K  + \norm{f}_{ \X^* }
  },
\]
where \({U_{C} \coloneqq \Set[\big]{u \in \X \given \norm{u-u^\dagger} = 1 \text{ and } \mathcal{R}(u) \leq C}}\).
Such an \({r_0}\) exists in view of the growth
condition in \Cref{assumptions}~\ref{assumptions:growth}.

Assume now that \({r > r_0}\) and that \({u \in \X}\) satisfies \({\norm{u-u^\dagger} = 1}\).
By the decomposition
\[
  { \mathcal{A}u_r - f = \left(  \mathcal{A}u_r - \mathcal{A}u^\dagger \right) + \left( f^\dagger - f \right)  },
\]
it follows from monotonicity that
\begin{align*}
\dualitypairing*{ \mathcal{A}u_r - f }{ u - u^\dagger } &\geq \dualitypairing*{ \mathcal{A}u_r - \mathcal{A}u^\dagger }{ u - u^\dagger } - \norm{ f^\dagger - f }_{ \X^* } \norm{ u - u^\dagger } \\[1ex]
& \geq 0 - \norm{ f^\dagger - f }_{ \X^* } = - C\alpha,
\end{align*}
and thus
\[
  \dualitypairing*{ \mathcal{A}u_r - f }{ r(u - u^\dagger) } + \alpha \mathcal{R}(u_r)
  \geq -C\alpha r + \alpha\mathcal{R}(u_r).
\]
Therefore, if \({\mathcal{R}(u_r) > Cr}\), then~\eqref{eq:existence_toshow} holds.

Let us now assume that \({\mathcal{R}(u_r) \leq Cr}\).
The convexity of \({\mathcal{R}}\) and the assumption \({\mathcal{R}(u^\dagger) = 0}\) imply that
\begin{equation*}
r \mathcal{R} (u) = r\mathcal{R}\Bigl(\tfrac{1}{r}u_r + \tfrac{r-1}{r}u^\dagger\Bigr)\leq \mathcal{R}(u_r) + (r - 1) \mathcal{R}(u^\dagger) = \mathcal{R}(u_r) \leq Cr,
\end{equation*}
that is, \({\mathcal{R}(u) \leq C}\), which implies that \({u \in U_C}\).
Since \({r > r_0}\), it follows that
\[
  \dualitypairing*{\mathcal{A}u_r}{u-u^\dagger} > \alpha K + \norm{f}_{ \X^* }.
\]
Moreover, we have that
\[
  \norm{u} + \mathcal{R}(u) \leq \norm{u-u^\dagger} + \norm{u^\dagger} + \mathcal{R}(u) \leq 1+\norm{u^\dagger} + C,
\]
and thus \({u \in V}\), which in turn implies that \({\mathcal{R}(u) \geq -K}\).
Therefore
\begin{align*}
  \dualitypairing*{ \mathcal{A}u_r - f }{ r(u - u^\dagger) } + \alpha \mathcal{R}(u_r)
  &> \bigl(\alpha K + \norm{f}_{ \X^* } \bigr) \, r - \dualitypairing*{f}{r(u-u^\dagger)} + r\alpha\mathcal{R}(u)\\[1ex]
  &\geq \alpha Kr + r\norm{f}_{ \X^* } - r\norm{f}_{ \X^* } -r\alpha K = 0.
\end{align*}
This shows that~\eqref{eq:existence_toshow} holds for all \({u\in\X}\)
with \({\norm{u-u^\dagger}= 1 \; (= t_0) }\) and \({\mathcal{R}(u)\leq C}\), and all \({r > r_0}\), which concludes the proof.

\enlargethispage{\baselineskip} 

As a consequence of the above arguments, we note the following preliminary stability result.
\begin{corollary} \label{thm:boundedness}
Let~\({ \alpha > 0 }\) be fixed.
If~\({ \Set{ f_k }_k }\) is a bounded sequence of data in~\({ \X^* }\),
then the corresponding sequence of regularised solutions~\({\Set{u_{\alpha, k}}_k}\) solving~\({\mathcal{A}u + \alpha \partial \mathcal{R}(u) \ni f_k}\) is also bounded.
\end{corollary}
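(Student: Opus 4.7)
The plan is to mirror the existence proof almost verbatim, the crucial observation being that boundedness of $\{f_k\}_k$ in $\X^*$ makes every estimate appearing in the existence argument \emph{uniform} in $k$. Consequently, the radius~$r$ coming from \cref{thm:browder} can be chosen independently of $k$, which is exactly the asserted boundedness.

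First, I would argue by contradiction: suppose $\{u_{\alpha, k}\}_k$ is unbounded. Passing to a subsequence (not relabelled), write $u_{\alpha, k} = r_k v_k$ with $r_k \coloneqq \norm{ u_{\alpha, k} } \to +\infty$ and $\norm{ v_k } = 1$. Since $u_{\alpha, k}$ satisfies $\mathcal{A}u_{\alpha, k} + \alpha \xi_k = f_k$ for some $\xi_k \in \partial \mathcal{R}(u_{\alpha, k})$, the subgradient inequality applied at the reference point $u^\dagger$ (recall $\mathcal{R}(u^\dagger) = 0$) gives $\alpha \dualitypairing{\xi_k}{u_{\alpha,k} - u^\dagger} \geq \alpha \mathcal{R}(u_{\alpha,k})$, whence
\begin{equation*}
\dualitypairing*{ \mathcal{A}(r_k v_k) - f_k }{ r_k v_k - u^\dagger } + \alpha r_k \mathcal{R} \bigl( v_k + \tfrac{ r_k - 1 }{ r_k } u^\dagger \bigr) \leq 0,
\end{equation*}
exactly as in~\eqref{eq:coercivity-contradiction}, using convexity of~$\mathcal{R}$ to absorb the factor $r_k$ just as in~\eqref{eq:convexity-scaling}.

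Now I would run the remainder of the existence proof with $r \rightsquigarrow r_k$, $u_r \rightsquigarrow v_k$, and $f \rightsquigarrow f_k$, checking uniformity in~$k$ at each step. Let $M \coloneqq \sup_k \norm{ \mathcal{A}u^\dagger - f_k }_{ \X^* } < +\infty$. The monotone decomposition in the existence proof then yields the uniform bound $\mathcal{R}(v_k + \tfrac{r_k - 1}{r_k} u^\dagger) \leq M(1 + \norm{u^\dagger})/\alpha$, cf.~\eqref{eq:regulariser-contradiction-bnd}. The compactness assumption in \cref{assumptions}~\ref{assumptions:coercivity} therefore provides a subsequence along which $v_k + \tfrac{r_k - 1}{r_k} u^\dagger \to \overline{u} + u^\dagger$, i.e.\ $v_k \to \overline{u}$, in $\X$. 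Demicontinuity gives $\mathcal{A}v_k \rightharpoonup \mathcal{A}\overline{u}$ weakly in~$\X^*$, and after rescaling so that $\norm{u_\diamond} \neq 1$ (hence $\overline{u} \neq u_\diamond$), strict monotonicity makes $\dualitypairing{\mathcal{A}v_k - \mathcal{A}u_\diamond}{v_k - u_\diamond}$ uniformly bounded away from zero.

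Finally, decomposing $\dualitypairing{\mathcal{A}(r_k v_k)}{r_k v_k - u^\dagger}$ as in the existence proof, the directional and spherical growth conditions force this quantity to grow superlinearly in~$r_k$, whereas the left-hand side of the contradiction inequality, after bounding $\dualitypairing{f_k}{r_k v_k - u^\dagger}$ by $\norm{f_k}_{\X^*}(r_k + \norm{u^\dagger})$ and using the (uniform) lower bound on $\mathcal{R}(v_k + \tfrac{r_k - 1}{r_k} u^\dagger)$ from lower semicontinuity, decays at most linearly in~$r_k$. This contradicts the displayed inequality above once $k$ is large enough.

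The main point---and essentially the only thing to check---is therefore bookkeeping: every constant that appeared as a function of the single datum~$f$ in the existence proof becomes a function of $\sup_k \norm{f_k}_{\X^*}$ here, and the argument goes through without modification. I do not anticipate a genuine obstacle beyond carefully tracking the dependence on $k$.
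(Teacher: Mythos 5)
Your proposal is correct and follows essentially the same route as the paper: the whole point is that the bound~\eqref{eq:regulariser-contradiction-bnd} on \({ \mathcal{R}\bigl(u_r + \tfrac{r-1}{r}u^\dagger\bigr) }\) becomes uniform in~\(k\) once \({ \norm{\mathcal{A}u^\dagger - f_k}_{\X^*} }\) is replaced by \({ \norm{\mathcal{A}u^\dagger}_{\X^*} + \sup_k \norm{f_k}_{\X^*} }\), after which the superlinear-growth argument of the existence proof yields a single radius \(r\) valid for all~\(k\). The only cosmetic difference is that you derive the contradiction inequality directly from the regularised equations via the subgradient inequality, whereas the paper verifies the coercivity condition~\eqref{eq:quantitative-coercivity-subgradient} uniformly in~\(k\) and then invokes the a~priori bound \({ \norm{u_\star} < r }\) from \cref{thm:browder}; both are valid.
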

\begin{proof}
  Since \({ \Set{ f_k }_k }\) is bounded, it follows that
  \[
    \widetilde{C} \coloneqq \sup_k \frac{\norm{f^\dagger-f_k}_{ \X^* }}{\alpha} < \infty.
  \]
  Replacing the constant \({C}\) in~\eqref{eq:regulariser-contradiction-bnd} by \({\widetilde{C}}\)
  and correspondingly modifying the values of \({K}\) and~\({r_0}\),
  we obtain by following the proof of \cref{thm:existence}
  that
  \[
    \dualitypairing*{ \mathcal{A}u_r - f_k }{ r(u - u^\dagger) } + \alpha \mathcal{R}(u_r) > 0
  \]
  for all \({u \in \X}\) with \({\norm{u-u^\dagger} = 1 \; (= t_0)}\), all \({r > r_0}\), and all \({k \in \N}\).  
  As a consequence of Theorem~\ref{thm:browder},
  we therefore obtain the estimate \({\norm{u_{\alpha,k}-u^\dagger} \leq r_0}\) for all~\({k}\),
  which proves the assertion.
\end{proof}

\subsection{Stability---proof of \cref{thm:stability}}

Let \({ \alpha > 0 }\) be fixed and \({ f \in \X^* }\) be given,
and assume that \({ \Set{ f_k }_k \subseteq \X^* }\) is a sequence of data converging to~\({ f }\).
Furthermore, let \({ u_\alpha }\) and \({ u_{ \alpha, k} }\) be the respective regularised solutions of
\begin{alignat}{2} 
&\mathcal{A}u_\alpha + \alpha \partial\mathcal{R}(u_\alpha) \ni f \qquad &&\text{and} \qquad \mathcal{A}u_{ \alpha, k} + \alpha \partial\mathcal{R}(u_{ \alpha, k}) \ni f_k, \notag \\
\shortintertext{so that}
&\mathcal{A}u_\alpha + \alpha \xi_\alpha = f \qquad &&\text{and} \qquad \mathcal{A}u_{ \alpha, k} + \alpha \xi_{ \alpha, k} = f_k  \label{eq:regularized-equations}
\end{alignat}
for some \({ \xi_\alpha \in \partial \mathcal{R}(u_\alpha) }\) and \({ \xi_{ \alpha, k} \in \partial \mathcal{R}(u_{ \alpha, k}) }\).
If we now test with \({u_{ \alpha, k} - u_\alpha}\) in both of the equations in~\eqref{eq:regularized-equations} and subtract the resulting expressions from each other,
we find by monotonicity of \({\partial \mathcal{R}}\) and \({\mathcal{A}}\) the basic stability estimate
\begin{align} \label{eq:basic-stability-estimate}
\begin{aligned}
0 &\leq \dualitypairing[\big]{ \xi_{ \alpha, k} -  \xi_\alpha}{u_{\alpha, k} - u_\alpha} \\[1ex]
&\leq \dualitypairing[\big]{\xi_{ \alpha, k} - \xi_\alpha + \tfrac{ 1 }{ \alpha } \bigl(  \mathcal{A}u_{ \alpha, k} - \mathcal{A}u_\alpha \bigr)}{u_{ \alpha, k} - u_\alpha} \\[1ex]
&= \frac{ 1 }{ \alpha } \dualitypairing[\big]{ f_k - f }{ u_{ \alpha, k} - u_\alpha } \\[1ex]
&\leq \frac{1}{\alpha} \norm{ f_k - f }_{ \X^* } \norm{u_{ \alpha, k} - u_\alpha}.
\end{aligned}
\end{align}
Since \({ \norm{ u_{ \alpha, k} - u_\alpha }  }\), as a \({ k }\)-indexed sequence, is bounded by \cref{thm:boundedness},
it then follows that 
\begin{equation}  \label{eq:stability-convergence}
\lim_{ k \to + \infty }\dualitypairing[\big]{ \xi_{ \alpha, k} -  \xi_\alpha}{u_{\alpha, k} - u_\alpha} = 0.
\end{equation}
Moreover, by reflexivity of \({ \X }\) and boundedness, \({\Set{u_{\alpha, k}}_k}\) converges weakly---up to a subsequence---to some~\({u \in \X }\),
and from~\eqref{eq:stability-convergence} we infer that
\begin{align} \label{eq:stability-limsup-R}
\begin{aligned}
\limsup_{k} \mathcal{R}(u_{\alpha, k}) &\leq \mathcal{R}(u_\alpha) + \lim_{ k } \dualitypairing[\big]{ \xi_{ \alpha, k} }{u_{ \alpha, k} - u_{ \alpha}} \\
&= \mathcal{R}(u_\alpha) + \dualitypairing[\big]{\xi_\alpha}{u - u_\alpha} \\
&< \infty.
\end{aligned}
\end{align}
\Cref{assumptions}~\ref{assumptions:coercivity} now implies that \({u_{\alpha, k} \to u}\) strongly, again up to a subsequence,
and so demicontinuity results in \({\mathcal{A}u_{\alpha, k} \rightharpoonup \mathcal{A}u}\) weakly in~\({ \X^* }\).
In combination with the convergence \({f_k \to f}\) and~\eqref{eq:stability-convergence},
which by virtue of the definitions of \({\xi_{\alpha,k}}\) and \({\xi_\alpha}\) can be rewritten as
\begin{equation*}
\lim_{k} \dualitypairing[\big]{\mathcal{A}u_{\alpha} - \mathcal{A}u_{\alpha, k} + f - f_k}{u_\alpha - u_{\alpha, k}} = 0,
\end{equation*}
we therefore deduce that
\begin{equation*}
\dualitypairing*{\mathcal{A}u_\alpha - \mathcal{A}u}{u_\alpha - u} = 0.
\end{equation*}
Strict monotonicity now forces \({u = u_\alpha}\),
and standard subsequence-subsequence reasoning yields that the full sequence~\({\Set{u_{\alpha, k}}_k}\) converges to~\({u_\alpha}\).
As a consequence, \eqref{eq:stability-limsup-R}~becomes
\begin{equation*}
\limsup_{k} \mathcal{R}(u_{\alpha, k}) \leq \mathcal{R}(u_\alpha),
\end{equation*}
so that \({\mathcal{R}(u_{\alpha, k}) \to \mathcal{R}(u_\alpha)}\) by lower semicontinuity.
Finally, demiconinuity gives \({ \mathcal{A}u_{\alpha, k} \rightharpoonup \mathcal{A}u_\alpha }\) weakly,
and this concludes the proof of \cref{thm:stability}.

\subsection{Convergence---proof of \cref{thm:convergence}}

Note first that, similarly to the basic stability estimate~\eqref{eq:basic-stability-estimate}, we have
\begin{align} \label{eq:basic-convergence-estimate}
\begin{aligned}
\mathcal{R}(u_\alpha^\delta) - \mathcal{R}(u^\dagger) &\leq \dualitypairing[\big]{\xi_\alpha^\delta}{u_\alpha-u^\dagger} \\[1ex]
&= \dualitypairing[\big]{\xi_\alpha^\delta + \tfrac{1}{\alpha}(\mathcal{A}u_\alpha^\delta-\mathcal{A}u^\dagger)}{u_\alpha^\delta-u^\dagger} - \frac{1}{\alpha}\dualitypairing[\big]{\mathcal{A}u_\alpha^\delta-\mathcal{A}u^\dagger}{u_\alpha^\delta-u^\dagger} \\[1ex]
&= \frac{1}{\alpha}\dualitypairing[\big]{f^\delta-f^\dagger}{u_\alpha^\delta-u^\dagger} - \frac{1}{\alpha}\dualitypairing[\big]{\mathcal{A}u_\alpha^\delta-\mathcal{A}u^\dagger}{u_\alpha^\delta-u^\dagger}\\[1ex]
&\leq \frac{\delta}{\alpha}\norm{u_\alpha^\delta-u^\dagger} - \frac{1}{\alpha}\dualitypairing[\big]{\mathcal{A}u_\alpha^\delta-\mathcal{A}u^\dagger}{u_\alpha^\delta-u^\dagger}.
\end{aligned}
\end{align}
If \({ \Set[\big]{ u_{\alpha}^\delta }{}_\delta^{} }\) is bounded in~\({ \X }\),
where \({ \alpha = \alpha(\delta) }\) is as in~\eqref{eq:convergence-apriori},
then it converges weakly---up to a subsequence---to some \({u \in \X }\) by reflexivity.
From~\eqref{eq:basic-convergence-estimate}, monotonicity of~\({ \mathcal{A} }\), and the hypothesis we furthermore have
\begin{equation}\label{eq:basic-convergence-estimate:2}
\limsup_{\delta \to 0} \mathcal{R}(u_{\alpha}^\delta) \leq \mathcal{R}(u^\dagger) + \limsup_{\delta \to 0} \frac{\delta}{\alpha(\delta)}\norm{u_{\alpha}^\delta - u^\dagger} \leq C < \infty
\end{equation}
for some constant \({ C }\) depending only on \({ \sup_\delta( \norm{ u_\alpha^\delta } ) }\).
By compactness in \Cref{assumptions}~\ref{assumptions:coercivity},
it then follows that \({u_\alpha^\delta \to u}\) as \({ \delta \to 0 }\),
and consequently \({ \mathcal{A}u_\alpha^\delta \rightharpoonup \mathcal{A}u }\) weakly in~\({ \X^* }\) by demicontinuity.
Now we may write~\eqref{eq:basic-convergence-estimate} as
\begin{equation*}
0 \leq \dualitypairing[\big]{\mathcal{A}u_\alpha^\delta-\mathcal{A}u^\dagger}{u_\alpha^\delta-u^\dagger} \leq \delta \norm{ u_\alpha^\delta - u^\dagger } - \alpha \left( \mathcal{R}(u_\alpha^\delta) - \mathcal{R}(u^\dagger) \right)
\end{equation*}
and take limits to obtain
\begin{equation*}
\dualitypairing{ \mathcal{A}u - \mathcal{A}u^\dagger}{u - u^\dagger} = 0.
\end{equation*}
Strict monotonicity gives \({u = u^\dagger}\),
and uniqueness of \({u^\dagger}\) also implies that the full sequence \({\Set[\big]{ u_{\alpha}^\delta }{}_\delta^{}}\) converges to~\({u^\dagger}\).
Thus~\eqref{eq:basic-convergence-estimate:2} in fact implies that
\[
  \limsup_{\delta \to 0} \mathcal{R}(u_\alpha^\delta) \leq \mathcal{R}(u^\dagger),
\]
which, in view of the lower semicontinuity of \({ \mathcal{R} }\) implies that
\({ \mathcal{R}(u_\alpha^\delta) \to \mathcal{R}(u^\dagger) }\).

Accordingly, it remains to establish boundedness of \({\Set[\big]{ u_{\alpha}^\delta }{}_\delta^{}}\).
By assumption, \({\delta / \alpha(\delta)}\) is bounded as~\({\delta \to 0}\),
and thus there exists \({\widetilde{C} > 0}\) such that \({\delta/\alpha(\delta) \leq \widetilde{C}}\)
for all sufficiently small~\({\delta}\).
As such, we can again replace the constant \({C}\) in~\eqref{eq:regulariser-contradiction-bnd} by \({\widetilde{C}}\)
and correspondingly modify the values of \({K}\) and~\({r_0}\).
Then we obtain by following the proof of \cref{thm:existence}
and applying \cref{thm:browder} the estimate
\({\norm{u_\alpha^\delta-u^\dagger} \leq r_0}\) for all sufficiently small~\({\delta}\),
which establishes the boundedness of \({\Set[\big]{ u_{\alpha}^\delta }{}_\delta^{}}\)
and concludes the proof of \cref{thm:convergence}.

\section{Proofs of convergence rates}
\label{sec:convergence-rates}

In the following we will establish \cref{thm:rates-main,thm:rates-norm},
where we restate the general variational source condition
\begin{equation} \label{eq:varineq-restated}
D(u, u^\dagger) \leq \mathcal{R}(u) - \mathcal{R}(u^\dagger) + \varphi\bigl(\dualitypairing{ \mathcal{A}u - \mathcal{A}u^\dagger }{ u - u^\dagger }  \bigr)
\end{equation}
from~\eqref{eq:varineq} for convenience.
We write \({ \mathcal{A}u^\dagger }\) instead of \({ f^\dagger }\) in order to highlight the monotone structure.
Here, \({ D }\) is any nonnegative function on~\({ \X \times \X }\) acting as a distance measure,
and \({ \varphi \colon [0, \infty) \to [0, \infty) }\) denotes an \emph{index function},
meaning that \({ \varphi }\) is concave, strictly increasing, and continuous with \({ \varphi(0) = 0 }\).
We also assume a sublinear rate of decay of \({ \varphi }\) at the origin---that is, \({ \lim_{ t \to 0^+ } \varphi(t) / t = \infty }\).

In addition, let \({ \psi \colon [0, \infty) \to [0, \infty) }\) be the \emph{convex (Fenchel) conjugate} of~\({ \varphi^{-1} }\) defined by
\begin{equation} \label{eq:convex-conjugate}
\psi(s) = \sup_{ t \geq 0 } \left( st - \varphi^{-1}(t) \right).
\end{equation}
By construction, this mapping is convex and, due to the nonnegativity of \({ \varphi }\) plus the fact that \({ \varphi(0) = 0 }\), also nonnegative with \({ \psi(0) = 0 }\).
This further implies that \({ \psi }\) is continuous and increasing on~\({ [0, \infty) }\).
Moreover, since \({ \varphi }\) decays sublinearly at~\({ 0 }\),
the function~\({ \psi }\) is strictly increasing near~\({ 0 }\)---and therefore also on its whole domain by convexity.

\begin{proof}[of~\cref{thm:rates-main}]
With help of the basic stability estimate
\begin{equation*}
\mathcal{R}(u_\alpha^\delta) - \mathcal{R}(u^\dagger) \leq \frac{\delta}{\alpha}\norm{u_\alpha^\delta-u^\dagger} - \frac{1}{\alpha}\dualitypairing[\big]{\mathcal{A}u_\alpha^\delta-\mathcal{A}u^\dagger}{u_\alpha^\delta-u^\dagger}
\end{equation*}
from~\eqref{eq:basic-convergence-estimate},
we find that the general variational inequality~\eqref{eq:varineq-restated} yields
\begin{align*}
D(u_\alpha^\delta, u^\dagger) &\leq \mathcal{R}(u_\alpha^\delta) - \mathcal{R}(u^\dagger) + \varphi\Bigl(\dualitypairing[\big]{ \mathcal{A}u_\alpha^\delta - \mathcal{A}u^\dagger }{ u_\alpha^\delta - u^\dagger } \Bigr)\\[1ex]
& \leq \frac{\delta}{\alpha}\norm{u_\alpha^\delta - u^\dagger} - \frac{1}{\alpha}\dualitypairing[\big]{\mathcal{A}u_\alpha^\delta - \mathcal{A}u^\dagger}{u_\alpha^\delta - u^\dagger} + \varphi\Bigl(\dualitypairing[\big]{ \mathcal{A}u_\alpha^\delta - \mathcal{A}u^\dagger }{ u_\alpha^\delta - u^\dagger }  \Bigr)\\[1ex]
 &\leq \frac{\delta}{\alpha}\norm{u_\alpha^\delta-u^\dagger} + \sup_{t \geq 0} \left(\varphi(t) - \frac{t}{\alpha} \right).
\end{align*}
Since by definition of \({ \psi }\) we have
\begin{equation*}
\sup_{t \geq 0} \left(\varphi(t) - \frac{t}{\alpha} \right) = \frac{1}{\alpha} \sup_{t\geq 0} \, \bigl(\alpha\varphi(t) - t \bigr) = \frac{1}{\alpha} \sup_{s\geq 0} \,\bigl( \alpha s - \varphi^{-1}(s) \bigr)= \frac{\psi(\alpha)}{\alpha},
\end{equation*}
it follows that
\begin{equation}\label{eq:estD}
D(u_\alpha^\delta, u^\dagger) \leq \frac{ \delta }{ \alpha }  \norm{u_\alpha^\delta- u^\dagger} + \frac{\psi(\alpha)}{\alpha}.
\end{equation}
Now recall that the regularisation method~\eqref{eq:subgradient-lavrentiev-reg} is convergent by \cref{thm:convergence}---in fact, weak convergence together with reflexivity of~\({ \X }\) suffice here.
Therefore, for sufficiently a small noise level~\({ \delta }\) and an appropriate parameter choice for~\({ \alpha }\),
we know that \({ u_\alpha^\delta - u^\dagger }\) is bounded in~\({ \X }\) and \({ u_\alpha^\delta \in \mathcal{M} }\).
Hence, there exists \({ C > 0 }\) such that \({ \norm{u_\alpha^\delta - u^\dagger} \leq C }\),
and we obtain the estimate claimed in the statement of the theorem.
At last, choosing \({\alpha \sim \psi^{-1}(\delta) }\) in~\eqref{eq:estD} gives the convergence rate
\begin{equation*}
D(u_\alpha^\delta, u^\dagger) \lesssim \frac{\delta}{\psi^{-1}(\delta)}.
\end{equation*}
\end{proof}

\enlargethispage{\baselineskip} 

\begin{corollary} \label{thm:rates-holder}
Assume that a variational source condition of Hölder type
\begin{equation*}
D(u, u^\dagger) \leq \mathcal{R}(u) - \mathcal{R}(u^\dagger) + c \left(\dualitypairing{ \mathcal{A}u - \mathcal{A}u^\dagger }{ u - u^\dagger }\right)^{1/p}
\end{equation*}
holds for some \({ p > 1 }\) and~\({ c > 0 }\).
Then as~\({ \delta \to 0 }\) we obtain the convergence rate
\begin{equation*}
D(u_\alpha^\delta, u^\dagger) \lesssim \delta^{1/p} \qquad \text{if } \alpha \sim \delta^{1 / q},
\end{equation*}
where \({ q \coloneqq p / (p - 1) > 1 }\) is the conjugate Hölder exponent of~\({ p }\).
\end{corollary}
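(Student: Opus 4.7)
The plan is to view this as a direct specialisation of \cref{thm:rates-main} with the Hölder-type index function \({ \varphi(t) = c \, t^{1/p} }\) and to compute the associated Fenchel conjugate~\({ \psi }\) explicitly. First, I would verify that \({ \varphi }\) qualifies as an admissible index function in the sense introduced before \cref{thm:rates-main}: it is continuous, strictly increasing, concave (since \({ 1/p \in (0,1) }\)), vanishes at the origin, and satisfies \({ \varphi(t)/t = c \, t^{1/p - 1} \to +\infty }\) as \({ t \searrow 0 }\), so the sublinear-decay requirement is met. Hence \cref{thm:rates-main} is applicable with the distance measure~\({ D }\) from the statement.

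Next, I would invert to get \({ \varphi^{-1}(s) = (s/c)^p }\) and compute
\begin{equation*}
\psi(s) = \sup_{t \geq 0} \bigl(st - (t/c)^p\bigr)
\end{equation*}
by standard calculus. Setting the derivative in~\({ t }\) to zero yields the maximiser \({ t_\star = c^{p/(p-1)} (s/p)^{1/(p-1)} }\), and substituting back gives \({ \psi(s) = C_{p,c} \, s^q }\) with \({ q = p/(p-1) }\) and an explicit constant \({ C_{p,c} = (p-1)\, p^{-q} \, c^q }\). In particular, \({ \psi^{-1}(\delta) \sim \delta^{1/q} }\).

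Finally, I would invoke the parameter-choice part of \cref{thm:rates-main} with \({ \alpha \sim \psi^{-1}(\delta) \sim \delta^{1/q} }\) to conclude
\begin{equation*}
D(u_\alpha^\delta, u^\dagger) \lesssim \frac{\delta}{\psi^{-1}(\delta)} \sim \delta^{1 - 1/q} = \delta^{1/p},
\end{equation*}
which is exactly the claim. No step presents a real obstacle—the only thing to be careful about is verifying the qualitative hypotheses on \({ \varphi }\) (in particular the sublinearity at \({ 0 }\)) so that \cref{thm:rates-main} really applies; the rest is the elementary Legendre transform of a power function and a substitution.
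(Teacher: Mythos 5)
Your proposal is correct and follows essentially the same route as the paper: specialise \cref{thm:rates-main} to \({ \varphi(t) = c\,t^{1/p} }\), compute \({ \psi(s) \sim s^q }\) so that \({ \psi^{-1}(\delta) \sim \delta^{1/q} }\) and \({ \delta/\psi^{-1}(\delta) \sim \delta^{1/p} }\). The paper states this in one line; you merely carry out the Legendre transform explicitly and check the index-function hypotheses, both of which are done correctly.
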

\begin{proof}
This is immediate from \cref{thm:rates-main},
noting that for \({ \varphi \sim \delta^{1/p} }\) we have \({ \psi \sim \delta^{q} }\),
which leads to \({ \psi^{-1} \sim \delta^{1/ q} }\) and \({ \delta / \psi^{-1}(\delta) \sim \delta^{1 / p} }\).
\end{proof}

Finally we specialise \cref{thm:rates-holder} to norm-like distance measures of the form \({ D(u, u^\dagger) \sim \norm{ u - u^\dagger }^{ r }  }\) for some \({ r > 1 }\).
This results in \cref{thm:rates-norm} by the following arguments.

\begin{proof}[of~\cref{thm:rates-norm}]
We proceed along the proof of \cref{thm:rates-main} until estimate~\eqref{eq:estD},
which in this case reads
\begin{equation*}
c_1 \norm{u_\alpha^\delta - u^\dagger}^r \leq \frac{\delta}{ \alpha } \norm{u_\alpha^\delta - u^\dagger} + c_3 \alpha^{q - 1}
\end{equation*}
for some constant \({ c_ 3 > 0 }\) and with conjugate Hölder exponent \({ q \coloneqq p / (p - 1) }\).
Next we apply Young's inequality \({ ab \leq \frac{ 1 }{ r }a^r + \frac{ 1 }{ s }b^s }\) with conjugate exponent \({ s \coloneqq r / (r - 1) }\) to \({ a \coloneqq  \norm{u_\alpha^\delta - u^\dagger} }\) and \({b \coloneqq 2\delta/(rc_1\alpha)}\) and obtain that
\begin{alignat*}{2} \SwapAboveDisplaySkip
c_1\norm{u_\alpha^\delta - u^\dagger}^r &\leq \frac{rc_1}{2} \frac{2\delta \norm{u_\alpha^\delta - u^\dagger}}{r c_1 \alpha} &&+ c_3 \alpha^{q - 1} \\
&\leq \frac{c_1}{2} \norm{u_\alpha^\delta - u^\dagger}^r + \frac{rc_1}{2s}\left(\frac{2\delta}{rc_1\alpha}\right)^s &&+ c_3 \alpha^{q - 1}.
\end{alignat*}
This simplifies to
\begin{equation*}
\norm{u_\alpha^\delta - u^\dagger}^r \lesssim \frac{ \delta^s }{ \alpha^s } + \alpha^{q - 1},
\end{equation*}
and we may now choose
\begin{equation*}
\alpha \sim \delta \tothepowerof \left( \frac{ s }{ s + q - 1 } \right) =  \delta \tothepowerof \left(\frac{r(p - 1)}{rp - 1}\right),
\end{equation*}
remembering the definitions of \({ q }\) and \({ s }\)---in particular, that \({ (p - 1)(q - 1) = 1 }\)---to arrive at the quantitative error estimate
\begin{equation*}
\norm{u_\alpha^\delta - u^\dagger} \lesssim \delta \tothepowerof \left( \frac{ 1 }{ rp - 1 } \right).
\end{equation*}
\end{proof}

\begin{remark} \label{rem:rates}
Suppose that \({ \mathcal{A} }\) is positively homogeneous with \({ \mathcal{A}(0) = 0 }\)---for instance, that \({ \mathcal{A} }\) is linear---and assume that \({ u^\dagger \neq 0 }\).
In particular, \({ \dualitypairing{ \mathcal{A}u^\dagger }{ u^\dagger } > 0 }\) by strict monotonicity.
If we test with \({ u = (1 - \epsilon) u^\dagger }\) for \({ \epsilon \in (0, 1) }\) in the general variational source condition~\eqref{eq:varineq},
then convexity of \({ \mathcal{R} }\) yields that
\begin{equation} \label{eq:rates-realistic}
D \bigl( (1 - \epsilon)u^\dagger, u^\dagger \bigr) \leq \epsilon \left( \mathcal{R}(0) - \mathcal{R}(u^\dagger) \right) + \varphi \bigl( \epsilon^2 \dualitypairing{ \mathcal{A}u^\dagger }{ u^\dagger }  \bigr).
\end{equation}
By requiring \({ \mathcal{R}(u^\dagger) > \mathcal{R}(0) }\) and finiteness of
\begin{equation*}
\lim_{ \epsilon \to 0^+ } \frac{ 1 }{ \epsilon } D \bigl( (1 - \epsilon)u^\dagger, u^\dagger \bigr),
\end{equation*}
which is automatic if \({ D(u, v) \sim \norm{ u - v }^r }\) for some~\({ r \geq 1 }\),
we may now divide by \({ \sqrt{t}  \coloneqq \epsilon \sqrt{ \dualitypairing{ \mathcal{A}u^\dagger }{ u^\dagger } }}\) in~\eqref{eq:rates-realistic} and let~\({ \epsilon \to 0^+ }\).
This gives
\begin{equation*}
\varphi(t) \gtrsim \sqrt{t}
\end{equation*}
uniformly near~\({ 0 }\),
and hence, we infer that the variational source condition~\eqref{eq:varineq} is realistic only for~\({ \varphi(t) }\) decaying no faster than~\({ \sqrt{t} }\) as~\({ t \to 0^+ }\).
In particular, this corresponds to \({ p \geq 2 }\) in \cref{thm:rates-norm,thm:rates-holder} and agrees with the discussion in \autocite[Section~3.2]{HofKalRes2016a}.
\end{remark}

\section{TV denoising in convolutional Volterra integral problems}
\label{sec:volterra}

We now discuss an application of subgradient-based Lavrentiev regularisation~\eqref{eq:subgradient-lavrentiev-reg} for TV denoising in a class of Volterra integral operators~\eqref{eq:volterra-operator} of the first kind,
where we focus on the linear, convolutional case
\begin{equation} \label{eq:op-Volterra-convolution}
\mathcal{A}u(t) \coloneqq \int_{ 0 }^{ t } k(t - \tau) \, u(\tau) \dee \tau,
\end{equation}
with \({ \mathcal{A} }\) acting from the reflexive space \({ \X \coloneqq \textnormal{L}^p(0, T) }\) to \({ \X^* = \textnormal{L}^q(0, T) }\) for suitable kernels~\({ k }\) and conjugate exponents \({ p, q \in (1, \infty) }\).
Remember that \({ \mathcal{A} }\) retains causality with respect to time~\({ t }\),
that is, \({ \mathcal{A}u(t) }\) depends only on \({ u(\tau) }\) for~\({ \tau \leq t }\).
Note nevertheless that the theory may also be applied to more global, noncausal problems,
involving, for instance, Fredholm integral operators.
One may also extend the results to operators \({ u \mapsto \mathcal{A}u + a(\cdot, u(\cdot))  }\) with pointwise terms added,
subject to suitable growth conditions.

In the context of TV denoising,
we next define the regularisation functional \({ \mathcal{R} \colon\X \to [0, \infty] }\) as
\begin{equation} \label{eq:total-variation-regulariser}
\mathcal{R}(u) \coloneqq
\begin{cases}
\seminorm{u}_{ \textnormal{TV}} & \text{ if } u \in \textnormal{BV}(0, T); \\
+\infty &\text{ else,}
\end{cases}
\end{equation}
where \({ \textnormal{BV}(0, T) }\) is the space of integrable functions with bounded variation and seminorm
\begin{equation*}
\seminorm{u}_{ \textnormal{TV}} \coloneqq \abs{ \textnormal{D}u }(0, T) \coloneqq \sup \Set*{ -\int_{ 0 }^{ T } \varphi' u \dee t \given \varphi \in \textnormal{C}_{ \textnormal{c}}^1(0, T) \text{ with } \norm{ \varphi }_{ \textnormal{L}^\infty(0, T) } \leq 1 }.
\end{equation*}
Then \({ \mathcal{R} }\) satisfies \Cref{assumptions}~\ref{assumptions:regulariser},
and since \({ \textnormal{BV}(0, T) }\) is compactly embedded in \({ \X }\) for every \({ p \in (1, \infty) }\) (see~\autocite[Proposition~3.13]{AmbFusPal2000a}),
it follows that the sublevel sets in \Cref{assumptions}~\ref{assumptions:coercivity} are compact as well.
Note also that the growth condition~\eqref{eq:coercivity-growth} holds automatically because \({ \mathcal{A} }\) is linear.
Accordingly, it remains to establish strict monotonicity of~\({ \mathcal{A} }\) on~\({ \X }\),
which we are able to prove for a class of convolution kernels and exponents~\({ p }\).

\subsection{Strictly monotone Volterra operators}

If \({ k }\) is identically~\({ 1 }\), the inverse problem for \({ \mathcal{A}u = f^\dagger }\) becomes that of numerical differentiation.
In this case, the monotonicity of \({ \mathcal{A} }\) on \({ \textnormal{L}^2(0, T) }\), say, follows directly from
\begin{equation*}
\dualitypairing{ \mathcal{A}u }{ u } = \int_{ 0 }^{ T } \mathcal{A}u(t) \, u(t) \dee t =  \frac{ 1 }{ 2 } \left( \int_{ 0 }^{ T } u(t) \dee t \right)^2
\end{equation*}
by integrating by parts,
which implies that \({ \mathcal{A} }\) is strictly monotone on the quotient Hilbert space
\begin{equation*}
\textnormal{L}^2(0, T) / \Set[\big]{ u \in \textnormal{L}^2(0, T) \given \textstyle\int_{ 0 }^{ T }  u \dee \tau = 0 }.
\end{equation*}
More generally, we have the fractional counterpart with singular convolution kernel
\begin{equation} \label{eq:abel-kernel}
k(x) = \frac{ x^{s - 1} }{ \Upgamma(s) }
\end{equation}
for \({ s > 0 }\), where \({ \Upgamma }\) is the Gamma function,
in which \eqref{eq:op-Volterra-convolution} is known as an Abel or Riemann--Liouville integral.
The latter kernel and also the negative exponential \({ x \mapsto \exp (- cx) }\)
both generate strictly monotone operators satisfying \Cref{assumptions}~\ref{assumptions:operator},
as seen in the following result.

\begin{proposition} \label{thm:volterra-strictly-monotone}
Let \({ p \in (1, 2] }\) and \({ q }\) be its conjugate exponent.
Assume that the kernel~\({ k }\) is strictly convex and decreasing on~\({ (0, T] }\) and lies in~\({ \textnormal{L}^{q/2}(0, T) }\),
with strictly positive mean, that is, \({ \int_{ 0 }^{ T } k(t) \dee t > 0 }\).
Then the convolutional Volterra operator~\eqref{eq:op-Volterra-convolution} defines a strictly monotone, bounded map
\begin{equation*}
\mathcal{A} \colon \textnormal{L}^p(0,T) \to \textnormal{L}^q(0, T).
\end{equation*}
\end{proposition}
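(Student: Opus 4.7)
My plan is to split the proof into boundedness and strict monotonicity.

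For boundedness, I will extend both $k$ and $u$ by zero outside $[0,T]$, making $\mathcal{A}u$ the restriction to $[0,T]$ of the full convolution $k*u$ on $\R$. Young's convolution inequality with exponents $q/2$, $p$, $q$ is admissible precisely when $\tfrac{2}{q}+\tfrac{1}{p}=1+\tfrac{1}{q}$, which reduces to the conjugacy $\tfrac{1}{p}+\tfrac{1}{q}=1$ and is therefore automatic; this yields
\[ \norm{\mathcal{A}u}_{\textnormal{L}^q(0,T)} \leq \norm{k*u}_{\textnormal{L}^q(\R)} \leq \norm{k}_{\textnormal{L}^{q/2}(0,T)}\norm{u}_{\textnormal{L}^p(0,T)}. \]

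For strict monotonicity I will first symmetrise the Volterra bilinear form, using that $k$ depends only on the difference $t-\tau$, as
\[ \dualitypairing{\mathcal{A}u-\mathcal{A}v}{u-v} = \tfrac{1}{2}\int_0^T\int_0^T k(|t-\tau|)\,w(t)w(\tau)\,\dee t\,\dee\tau, \qquad w := u-v. \]
Let $\mu$ denote the positive Radon measure on $(0,T)$ associated with the distributional second derivative of $k$, and set $c := -k'_-(T) \geq 0$. The integral representation of convex decreasing functions then gives
\[ k(s) = k(T) + c(T-s) + \int_0^T (\sigma-s)^+ \,\dee\mu(\sigma), \]
and strict convexity ensures that $\mu$ puts positive mass on every open subinterval of $(0,T)$. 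Substituting and swapping integrals decomposes the symmetric quadratic form as
\[ k(T)\Bigl(\int_0^T w\,\dee t\Bigr)^2 + c\,I_T(w) + \int_0^T I_\sigma(w)\,\dee\mu(\sigma), \]
where $I_\sigma(w) := \int_0^T \int_0^T (\sigma-|t-\tau|)^+ w(t)w(\tau)\,\dee t\,\dee\tau$.

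The crucial observation is that each tent kernel is an autocorrelation of an indicator, namely
\[ (\sigma-|t-\tau|)^+ = \int_{\R} \mathbf{1}_{[0,\sigma]}(u-t)\,\mathbf{1}_{[0,\sigma]}(u-\tau)\,\dee u, \]
so that $I_\sigma(w) = \norm{\mathbf{1}_{[0,\sigma]}*\widetilde w}_{\textnormal{L}^2(\R)}^2 \geq 0$ for the zero extension $\widetilde w$ of $w$. A short Fourier argument, using that the Fourier transform of $\mathbf{1}_{[0,\sigma]}$ has only isolated zeros, then shows $I_\sigma(w) > 0$ whenever $w \neq 0$ and $\sigma > 0$; combined with $\mu$ charging every subinterval, this renders $\int_0^T I_\sigma(w)\,\dee\mu(\sigma)$ strictly positive on $w \neq 0$. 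The main obstacle I anticipate is controlling the sign of the boundary term $k(T)\bigl(\int_0^T w\bigr)^2$: in the examples of interest (Abel and exponential kernels) one has $k \geq 0$ on $(0,T]$, so $k(T) \geq 0$ and strict positivity of $\dualitypairing{\mathcal{A}u-\mathcal{A}v}{u-v}$ is immediate, while in the general case the positive-mean hypothesis $\int_0^T k > 0$ must be leveraged to absorb this term into the strictly positive tent integral.
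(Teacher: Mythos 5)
Your boundedness argument via Young's convolution inequality (after zero-extension) is correct and equivalent to the paper's Hölder--Minkowski computation, and your symmetrisation plus second-derivative decomposition
\(k(s)=k(T)+c(T-s)+\int_0^T(\sigma-s)^+\dee\mu(\sigma)\)
with the autocorrelation identity \(I_\sigma(w)=\norm{\chi_{[0,\sigma]}*\widetilde w}_{\textnormal{L}^2(\R)}^2\) is a legitimate classical route to \emph{nonnegativity} of the form (a Pólya-type positive-definiteness argument). Two things, however, keep this from being a proof. First, a technical one: the Fubini interchange producing \(\int_0^T I_\sigma(w)\dee\mu(\sigma)\) requires an integrability condition on \(\mu=k''\) near the origin that does not obviously follow from \(k\in\textnormal{L}^{q/2}(0,T)\) when \(k\) is singular (e.g.\ Abel kernels): one needs something like \(\int_0^T\sigma^{1+2/q}\dee\mu(\sigma)<\infty\), and a decreasing convex \(k\in\textnormal{L}^{q/2}\) need not satisfy this. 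The paper avoids exactly this by first replacing \(k\) with the shifted bounded kernels \(k_\epsilon(t)=k(t+\epsilon)\) and passing to the limit by dominated convergence; you would need an analogous regularisation.

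The second gap is the substantive one, and you have named it yourself without closing it: when \(k(T)<0\) and \(\int_0^T w\neq 0\), the term \(k(T)\bigl(\int_0^T w\bigr)^2\) is strictly negative, and your plan offers no mechanism by which the hypothesis \(\int_0^T k>0\) absorbs it. This absorption is not routine. Any lower bound of the form \(I_\sigma(w)\gtrsim\bigl(\int w\bigr)^2\) obtained from Plancherel degenerates as \(\sigma\to0\) and carries \(w\)-dependent constants (since \(\widehat{\widetilde w}\) need only exceed \(\tfrac12\abs{\widehat{\widetilde w}(0)}\) on a \(w\)-dependent neighbourhood of the origin), so it cannot beat a fixed negative \(k(T)\) uniformly; your decomposition simply does not diagonalise the quadratic form in a way that isolates where the mean of \(k\) enters. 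The paper's proof does precisely this: extending \(k_\epsilon\) evenly and expanding it in a cosine series, Askey's lemma gives \(c_n>0\) for all \(n\geq1\) from convexity and monotonicity alone, while \(c_0=\tfrac2T\int_0^Tk>0\) is exactly the positive-mean hypothesis; the form then becomes \(\sum_{n\geq0}\tfrac{c_n}{2}\abs[\big]{\int_0^T u(t)e^{2\pi\textnormal{i}nt/T}\dee t}^2\), which is manifestly strictly positive for \(u\neq0\). Until you supply an argument replacing that diagonalisation, your proof establishes strict monotonicity only under the stronger hypothesis \(k(T)\geq0\).
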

\begin{remark}
Note that we both allow singularities in~\({ k }\) at the origin and some negative values as long as the mean of \({ k }\) is strictly positive.
The proof below is a simple adaption of~\autocite[Lemma~1.2]{Ask2015a}
and is, in some sense, related to Bochner's result characterising the Fourier transform of positive-definite functions.
We refer to~\autocite{NohShe1976a} and \autocite{Sta1976a} for further results in this direction, the latter of which includes monotonicity of Volterra operators \({ \int_{ 0 }^{ t } u(\tau) \dee \mu(\tau) }\) with respect to a class of positive-definite Radon measures~\({ \mu }\).
It is, however, not entirely clear how these results relate to strict monotonicity.
\end{remark}
\begin{proof} 
Hölder's inequality yields that
\begin{equation*}
\abs{ \mathcal{A}u(t) } \leq \norm{ k }_{ \textnormal{L}^{q/2}(0, T) }^{ 1/2 } \left( \int_{ 0 }^{ t } \abs{ k(t - \tau) }^{p/2} \abs{u(\tau) }^p \dee \tau \right)^{1/p} \!\!\!\!\!\!\!\!\!,
\end{equation*}
and so the general Minkowski integral inequality leads to
\begin{align*}
\norm{ \mathcal{A}u }_{ \textnormal{L}^q(0, T) }^{} &\leq \norm{ k }_{ \textnormal{L}^{q/2}(0, T) }^{ 1/2 } \Biggl( \int_{ 0 }^{ T } \left( \int_{ 0 }^{ t } \abs{ k(t - \tau) }^{p/2} \abs{u(\tau) }^p \dee \tau \right)^{q/p} \!\!\!\!\!\!\! \dee t \Biggr)^{1/q} \\[1ex]
& \leq \norm{ k }_{ \textnormal{L}^{q/2}(0, T) }^{ 1/2 } \Biggl( \int_{ 0 }^{ T } \left( \int_{ \tau }^{ T } \abs{ k(t - \tau) }^{q/2} \abs{u(\tau) }^q \dee t \right)^{p/q} \!\!\!\!\!\!\! \dee \tau \Biggr)^{1/p} \\[1ex]
& \leq \norm{ k }_{ \textnormal{L}^{q/2}(0, T) } \norm{ u }_{ \textnormal{L}^p(0, T) },
\end{align*}
which establishes boundedness of \({ \mathcal{A} \colon \textnormal{L}^p(0,T) \to \textnormal{L}^q(0, T) }\).

As regards strict monotonicity, we first approximate \({ k }\) pointwise by the shifted, bounded functions
\begin{equation*}
k_\epsilon(t) \coloneqq \left\{
\begin{aligned}
&k(t + \epsilon) && \text{for } t \in [0, T - \epsilon]; &\\
&k(T) && \text{for } t \in (T - \epsilon, T] &
\end{aligned}
\right.
\end{equation*}
in order to avoid the potential singularity at~\({ 0 }\).
Then after extending \({ k_\epsilon }\) evenly to~\({ [-T, 0] }\), Dirichlet's theorem gives that \({ k_\epsilon }\) (which is integrable, continuous, and of bounded variation) equals its Fourier series
\begin{equation*}
k_\epsilon(t) = \frac{c_{0, \epsilon}}{2} + \sum_{ n = 1 }^{ \infty } c_{n, \epsilon} \cos( \widehat{ n } t)
\end{equation*}
pointwise on \({ (-T, T) }\),
where \({ c_{n, \epsilon} \coloneqq \frac{ 2 }{ T } \int_{ 0 }^{ T } k_\epsilon(t) \cos( \widehat{ n } t)  \dee t }\) and \({ \widehat{ n } \coloneqq 2 \uppi n / T  }\).
Since \({ k }\) is strictly convex and decreasing on~\({ (0, T] }\),
it follows from \autocite[Lemma~1.1]{Ask2015a} that
\begin{equation*}
c_n \coloneqq \lim_{ \epsilon \to 0^+ } c_{n, \epsilon} = \frac{ 2 }{ T } \int_{ 0 }^{ T } k(t) \cos( \widehat{ n } t)  \dee t > 0
\end{equation*}
for all~\({ n \geq 1 }\), and \({ c_0 \coloneqq \lim\limits_{ \epsilon \to 0^+ } c_{0, \epsilon} = \frac{ 2 }{ T } \int_{ 0 }^{ T } k(t) \dee t > 0 }\) by assumption.

We next use dominated convergence repeatedly to find that
\begin{align*}
\dualitypairing{ \mathcal{A}u }{ u } &= \lim_{ \epsilon \to 0^+ } \int_{ 0 }^{ T } \int_{ 0 }^{ t } k_\epsilon(t - \tau) \, u(\tau) \dee \tau \, u(t) \dee t \\[1ex]
&= \int_{ 0 }^{ T } \left( \frac{ c_0 }{ 2 } \int_{ 0 }^t u(\tau) \dee \tau + \sum_{ n = 1 }^{ \infty } c_n \int_{ 0 }^{ t } u(\tau) \cos \bigl( \widehat{ n } (t - \tau) \bigr)  \dee \tau \right) u(t) \dee t.
\end{align*}
Observe now that
\begin{equation*}
\int_{ 0 }^{ T } \int_{ 0 }^{ t } u(\tau) \cos \bigl( \widehat{ n } (t - \tau) \bigr)  \dee \tau \, u(t) \dee t = \int_{ 0 }^{ T } \int_{ 0 }^{ t } v(\tau) \dee \tau \, v(t) \dee t + \int_{ 0 }^{ T } \int_{ 0 }^{ t } w(\tau) \dee \tau \, w(t) \dee t
\end{equation*}
with \({ v(t) \coloneqq u(t) \cos( \widehat{ n } t)  }\) and \({ w(t) \coloneqq u(t) \sin( \widehat{ n } t)  }\).
As such, integration by parts gives that
\begin{align*}
\int_{ 0 }^{ T } \int_{ 0 }^{ t } u(\tau) \cos \bigl( \widehat{ n } (t - \tau) \bigr) \dee \tau \, u(t) \dee t &= \frac{ 1 }{ 2 } \left( \int_{ 0 }^{ T } v(t) \dee t \right)^2 + \frac{ 1 }{ 2 }\left( \int_{ 0 }^{ T } w(t) \dee t \right)^2 \\[1ex]
&= \frac{ 1 }{ 2 } \abs[\Bigg]{ \int_{ 0 }^{ T } u(t) \, \textnormal{e}^{ \textnormal{i} \widehat{ n } t } \dee t }^2,
\end{align*}
and so
\begin{equation*}
\dualitypairing{ \mathcal{A}u }{ u } = \frac{ c_0 }{ 2 } \left( \int_{ 0 }^{ T } u(t) \dee t \right)^2 + \sum_{ n = 1 }^{ \infty } \frac{ c_n }{ 2 } \abs[\Bigg]{ \int_{ 0 }^{ T } u(t) \, \textnormal{e}^{ \textnormal{i} \widehat{ n } t } \dee t }^2
= \sum_{ n = 0 }^{ \infty } \frac{ c_n }{ 2 } \abs[\Bigg]{ \int_{ 0 }^{ T } u(t) \, \textnormal{e}^{ \textnormal{i} \widehat{ n } t } \dee t }^2.
\end{equation*}
Since \({ c_n > 0 }\) for all~\({ n \geq 0 }\),
we conclude that \({ \mathcal{A} }\) is strictly monotone.
\end{proof}

\subsection{Description of the subgradient of TV and numerical examples}

In order to make the regularisation method~\eqref{eq:subgradient-lavrentiev-reg} practical numerically,
let us first characterise the subgradient \({ \partial \mathcal{R} \colon \X \rightrightarrows \X^* }\) of~\eqref{eq:total-variation-regulariser}.
Denote by \({ (\textnormal{D}u)^+ }\) and \({ (\textnormal{D}u)^- }\) the positive and negative parts in the Jordan measure decomposition
\begin{equation*}
\textnormal{D}u = (\textnormal{D}u)^+ - (\textnormal{D}u)^-,
\end{equation*}
and let \({ \support \mu }\) symbolise the support of a measure~\({ \mu }\).
We then obtain the following description of~\({ \partial \mathcal{R} }\),
which is a straightforward adaption of~\autocite[Proposition~2.1]{KieMucRyb2013a}.
Note that \({ \textnormal{W}^{1, q}(0, T) }\) means the Sobolev space of functions in \({ \textnormal{L}^q(0, T) }\) whose weak derivative also lie in~\({ \textnormal{L}^q(0, T) }\).

\begin{proposition} \label{thm:tv-subgradient-char}
Let \({ u \in \domain \mathcal{R} = \textnormal{BV}(0, T) \subset \textnormal{L}^p(0, T) }\) for \({ \mathcal{R} }\) in~\eqref{eq:total-variation-regulariser} and \({ p, q \in (1, \infty) }\) be conjugate exponents.
Then \({ \xi \in  \partial \mathcal{R}(u) }\) if and only if \({ \xi = - \varphi' }\) for \({ \varphi \in \textnormal{W}^{1, q}(0, T) }\) vanishing at its endpoints (thus \({ \xi }\) has zero mean) and satisfying
\begin{equation*}
\norm{ \varphi }_{ \textnormal{L}^{\infty }(0, T)} \leq 1, \qquad \support (\textnormal{D}u)^+ \subseteq \Set{ \varphi = 1 }, \qquad \text{and} \qquad \support (\textnormal{D}u)^- \subseteq \Set{ \varphi = -1}. 
\end{equation*}
Moreover, \({ \mathcal{R}(u) = \dualitypairing{ \xi }{ u }  }\).
\end{proposition}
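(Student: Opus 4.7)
The plan is to identify $\mathcal{R}_{\textnormal{TV}}$ as the support function of a suitable closed convex set $K \subset \textnormal{L}^q(0, T)$, invoke the Fenchel--Young equality to characterise its subgradient, and then use integration by parts to translate the resulting optimality condition into the stated support conditions.

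First, I would observe that the defining supremum of $\mathcal{R}_{\textnormal{TV}}$ expresses it as the pointwise supremum of the continuous linear functionals $u \mapsto -\int_0^T \varphi' u \dee t$ as $\varphi$ ranges over the $\textnormal{L}^\infty$-unit ball of $\textnormal{C}_{\textnormal{c}}^\infty(0, T)$. Let $K \subset \textnormal{L}^q(0, T)$ denote the $\textnormal{L}^q$-closure of $\Set[\big]{ -\varphi' \given \varphi \in \textnormal{C}_{\textnormal{c}}^\infty(0, T),\ \norm{ \varphi }_{ \textnormal{L}^\infty } \leq 1 }$. A standard density argument---combining a smooth cutoff near the endpoints with a positivity-preserving mollification---then shows that
\begin{equation*}
K = \Set[\big]{ -\varphi' \given \varphi \in \textnormal{W}^{1, q}(0, T),\ \varphi(0) = \varphi(T) = 0,\ \norm{ \varphi }_{ \textnormal{L}^\infty } \leq 1 },
\end{equation*}
where the embedding $\textnormal{W}^{1, q}(0, T) \hookrightarrow \textnormal{C}([0, T])$ ensures that pointwise boundary values and the sup-norm bound are well defined. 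Consequently $\mathcal{R}_{\textnormal{TV}}(u) = \sup_{\xi \in K} \dualitypairing{\xi}{u}$ for every $u \in \textnormal{L}^p(0, T)$.

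Second, since $\mathcal{R}_{\textnormal{TV}}$ is the support function of the closed convex set~$K$, standard convex analysis gives that $\xi \in \partial \mathcal{R}_{\textnormal{TV}}(u)$ if and only if $\xi \in K$ and $\dualitypairing{\xi}{u} = \mathcal{R}_{\textnormal{TV}}(u)$. The \enquote{moreover} part is therefore built into the characterisation---and is in fact automatic from positive $1$-homogeneity of $\mathcal{R}_{\textnormal{TV}}$. To translate the equality $\dualitypairing{\xi}{u} = \mathcal{R}_{\textnormal{TV}}(u)$ into the claimed support conditions, I would invoke the integration-by-parts formula for BV-functions, extended by approximation from $\textnormal{C}_{\textnormal{c}}^\infty$ to $\varphi \in \textnormal{W}^{1, q}(0, T)$ vanishing at the endpoints, to obtain
\begin{equation*}
\dualitypairing{ -\varphi' }{ u } = \int_{(0, T)} \varphi \dee (\textnormal{D}u) = \int \varphi \dee (\textnormal{D}u)^+ - \int \varphi \dee (\textnormal{D}u)^-.
\end{equation*}
Since $\norm{ \varphi }_{ \textnormal{L}^\infty } \leq 1$ and $\abs{ \textnormal{D}u } = (\textnormal{D}u)^+ + (\textnormal{D}u)^-$, this last expression is bounded above by $\abs{ \textnormal{D}u }(0, T) = \mathcal{R}_{\textnormal{TV}}(u)$, with equality precisely when $\varphi = 1$ \, $(\textnormal{D}u)^+$-a.e.\ and $\varphi = -1$ \, $(\textnormal{D}u)^-$-a.e. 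By continuity of $\varphi$, these almost-everywhere identities are equivalent to the support conditions $\support (\textnormal{D}u)^+ \subseteq \Set{ \varphi = 1 }$ and $\support (\textnormal{D}u)^- \subseteq \Set{ \varphi = -1 }$.

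The main technical obstacle will be rigorously establishing the closure description of~$K$, in particular preserving the sup-norm constraint $\norm{ \varphi }_{ \textnormal{L}^\infty } \leq 1$ throughout the approximation procedure---requiring a cutoff-then-mollify scheme to handle the possible lack of compact support of general $\varphi \in \textnormal{W}^{1, q}(0, T)$ with vanishing boundary values---together with justifying the integration-by-parts identity against $\textnormal{W}^{1, q}$-test functions. Both steps are classical for BV-functions on an interval but warrant some care.
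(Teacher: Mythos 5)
Your proposal is correct and shares the paper's core mechanism: both reduce the subdifferential characterisation to the Fenchel--Young equality (equivalently, the exposed-face description of $\partial\sigma_K$) and then convert $\dualitypairing{ \xi }{ u } = \mathcal{R}_{\textnormal{TV}}(u)$ into the support conditions via the same integration-by-parts identity $\int_0^T \varphi \dee(\textnormal{D}u) + \int_0^T \varphi' u \dee t = 0$. Where you differ is in how the admissible set of subgradients is identified. The paper computes the convex conjugate $\mathcal{R}_{\textnormal{TV}}^*$ directly on all of $\textnormal{L}^q(0,T)$: it forces the zero-mean condition on $\xi$ (hence $\varphi(T)=0$) by testing with constants, forces $\norm{\varphi}_{\textnormal{L}^\infty} \leq 1$ by testing with monotone BV functions, and concludes $\mathcal{R}_{\textnormal{TV}}^* = 0$ on its domain --- no approximation argument is needed, since $\varphi$ is simply \emph{defined} as the antiderivative of $-\xi$. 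You instead posit the constraint set $K$ as the $\textnormal{L}^q$-closure of the smooth test class and must prove the closure identity, which shifts the technical burden onto the density step (cutoff-then-mollify while preserving the sup-norm bound) that you correctly flag; you also implicitly need the easy converse inclusion, which follows from uniform convergence of the antiderivatives. The trade-off is that your route buys a cleaner conceptual statement ($\mathcal{R}_{\textnormal{TV}}$ is the support function of an explicit closed convex set, so the \enquote{moreover} clause is automatic from positive homogeneity), at the cost of an approximation lemma the paper's testing argument avoids entirely. Both are sound.
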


\begin{proof}
As in~\autocite[Proposition~2.1]{KieMucRyb2013a},
we exploit from the Fenchel--Young inequality that
\begin{equation*}
\xi \in \partial \mathcal{R}(u) \quad \Leftrightarrow \quad \dualitypairing{ \xi }{ u } = \mathcal{R}(u) + \mathcal{R}^*( \xi ),
\end{equation*}
where
\begin{equation*}
\mathcal{R}^*( \xi ) \coloneqq \smashoperator{\sup_{ v \in \textnormal{L}^p(0, T) }} \; \bigl(\dualitypairing{ \xi }{ v } - \mathcal{R}(v) \bigr)
\end{equation*}
denotes the convex conjugate of~\({ \mathcal{R} }\) similarly as~\eqref{eq:convex-conjugate}.
Since \({ \mathcal{R}(u) < \infty }\) by assumption, it remains to characterise \({ \domain \mathcal{R}^* }\).

It suffices to let the supremum run over \({ v \in \textnormal{BV}(0, T) }\) and we need only consider \({ \xi \in \textnormal{L}^q(0, T) }\) with zero mean---if not, we get \({ \mathcal{R}^*( \xi ) = \infty }\) by letting \({ v }\) be constant.
It is also clear that \({ \xi }\) corresponds uniquely to a function \({ \varphi \in \textnormal{W}^{1, q}(0, T) }\) vanishing at its endpoints and given by
\begin{equation*}
\varphi(t) \coloneqq - \int_{ 0 }^{ t } \xi(\tau) \dee \tau.
\end{equation*}
By integrating by parts, we obtain that
\begin{equation*}
\int_{ 0 }^{ T } \varphi \dee \,(\textnormal{D}v) + \int_{ 0 }^{ T } \varphi' v \dee t = \varphi(T) v(T^-) - \varphi(0) v(0^+) = 0,
\end{equation*}
and so
\begin{align} \SwapAboveDisplaySkip
\dualitypairing{ \xi }{ v } - \seminorm{v}_{ \textnormal{TV}} &= - \int_{ 0 }^{ T } \varphi' v \dee t - \int_{ 0 }^{ T } \dee \, \abs{ \textnormal{D}v } \notag \\[1ex]
&= \int_{ 0 }^{ T } (\varphi - 1) \dee \,(\textnormal{D}v)^+ - \int_{ 0 }^{ T } (\varphi + 1) \dee \,(\textnormal{D}v)^-. \label{eq:tv-conjugate-calc}
\end{align}
If \({ \varphi \geq -1 }\) and \({ \Set{ \varphi > 1 } }\) has positive measure (or \({ \varphi \leq 1 }\) and \({ \Set{ \varphi < -1 } }\) has positive measure),
then one finds that \({ \mathcal{R}^*( \xi ) = \infty }\) by choosing, say, a sequence of monotonically increasing (decreasing) functions~\({ v \in \textnormal{BV}(0, T) }\).
As such, we must have \({ \norm{ \varphi }_{ \textnormal{L}^\infty(0, T) } \leq 1 }\).
But then~\eqref{eq:tv-conjugate-calc} is nonpositive---and equals~\({ 0 }\), corresponding to the supremum over~\({ v }\), precisely when
\begin{equation*}
(\textnormal{D}v)^{+}(\Set{ \varphi < 1 }) = 0 \qquad \text{and} \qquad (\textnormal{D}v)^{-}(\Set{ \varphi > -1 }) = 0.
\end{equation*}

In conclusion, \({\mathcal{R}^*( \xi ) = 0 }\) for \({ \xi \in \domain \mathcal{R}^* }\),
so that \({ \mathcal{R}(u) = \dualitypairing{ \xi }{ u }  }\) and the supremum in the definition of \({ \mathcal{R}^* }\) is achieved at~\({ v = u }\).
\end{proof}

By writing the regularised problem~\eqref{eq:subgradient-lavrentiev-reg} as \({ \mathcal{A}u - f^\delta \in - \alpha \partial \mathcal{R}(u) }\),
with \({ u \coloneqq u_\alpha^\delta }\) for brevity,
and introducing
\begin{equation*}
\mathcal{L}u(t) \coloneqq \int_{ 0 }^{ t } \left( \mathcal{A}u(\tau) - f^\delta(\tau) \right) \dee \tau,
\end{equation*}
the characterisation of~\({ \partial \mathcal{R} }\) in \cref{thm:tv-subgradient-char} now leads after integration to the following relations for \({ \abs{ \textnormal{D}u } }\)-almost every \({ t \in (0, T) }\):
\begin{empheq}[left = \empheqlbrace]{alignat=3}
\; &\abs{\mathcal{L}u(t) } &&\leq \alpha; &\label{eq:optimality-conds-tube} \\[1ex]
&\hphantom{|}\mathcal{L}u(t) &&= + \alpha  \qquad \text{when } t \in \support (\textnormal{D}u)^+; &\label{eq:optimality-conds-pos} \\[1ex]
&\hphantom{|}\mathcal{L}u(t) &&= - \alpha  \qquad \text{when } t \in \support (\textnormal{D}u)^-; &\label{eq:optimality-conds-neg} \\[1ex]
&\hphantom{|}\mathcal{L}u(T) &&= 0. & \label{eq:optimality-conds-sum}
\end{empheq}
We see from~\eqref{eq:optimality-conds-tube} that the integrated image \({ \int_{ 0 }^{ t } \mathcal{A}u \dee \tau }\) of the approximate solution lies within an \({ \alpha }\)-tube of the integrated data~\({ \int_{ 0 }^{ t } f^\delta \dee \tau }\).
Moreover, touching the upper part of the tube~\eqref{eq:optimality-conds-pos} may lead to an increase in~\({ u }\),
whereas \({ u }\) has a chance to decrease whenever one touches the lower part of the tube~\eqref{eq:optimality-conds-neg}.
Then it is possible to compute a piecewise-constant solution in \({\textnormal{BV}(0, T) }\) of the system~\eqref{eq:optimality-conds-tube}--\eqref{eq:optimality-conds-sum} by means of a new version of the classical taut-string algorithm~\autocite{DavKov2001a,Gra2007a,Con2013a},
which we will discuss in detail in a forthcoming paper.
The approach is very fast, efficient and employs dynamic programming.
In addition to \({ \mathcal{A} }\) being strictly monotone in the functional-analytic sense,
we also require in this context that \({ \mathcal{A} }\) has a positive kernel~\({ k }\)---so that \({ \mathcal{A} }\) preserves the pointwise order.
That is, if \({ u, v \in \textnormal{BV}(0,T) }\) satisfy
\({ u(t) \geq v(t) }\) for almost every \({ t \in (0,T) }\),
then also \({ \mathcal{A}u(t) \geq \mathcal{A}v(t) }\) for almost every~\({ t }\).

As illustrations for TV denoising with~\eqref{eq:subgradient-lavrentiev-reg} in convolutional Volterra equations~\eqref{eq:op-Volterra-convolution}, we first consider the Abel operator with kernel~\eqref{eq:abel-kernel} for \({ s = 1/3 }\).
\Cref{fig:volterra-abel} displays the typical performance of the approximate solutions~\({ u_\alpha^\delta }\) with moderate noise in the data,
and we observe quite faithful reconstructions of the original jump locations and heights.

\begin{figure}[h!]
\captionsetup{format=hang}
\vspace*{0em}%
\centering%
\raisebox{4em}[0pt][0pt]{\begin{subfigure}[c]{.2\textwidth}%
\caption{Solutions:\\
\({ u^\dagger }\) (dashed black); \\
\({ u_\alpha^\delta }\) (solid red).}
\end{subfigure}}%
\quad%
\centering%
\setlength\figureheight{.2\linewidth}%
\setlength\figurewidth{.75\textwidth}%
%
%
\colorlet{mycolor1}{WildStrawberry}%
\colorlet{mycolor2}{artcolortext}%
\begin{tikzpicture}

\begin{axis}[%
width=\figurewidth,
height=\figureheight,
at={(0\figurewidth,0\figureheight)},
scale only axis,
hide axis,
xmin=-0.025,
xmax=1.025,
ymin=-17,
ymax=10.1,
]
\draw [arrows = {|-Stealth}] (0,-16) -- (1,-16) node [midway, fill=artcolorbg] {\footnotesize Time \({ t }\)};
\addplot [color=mycolor2,dashed,line width=2.0pt,forget plot]
  table[row sep=crcr]{%
0	-8\\
0.029029029029029	-8\\
0.03003003003003	7\\
0.0990990990990991	7\\
0.1001001001001	-5\\
0.149149149149149	-5\\
0.15015015015015	-13\\
0.199199199199199	-13\\
0.2002002002002	-1\\
0.249249249249249	-1\\
0.25025025025025	3\\
0.309309309309309	3\\
0.31031031031031	0\\
0.379379379379379	0\\
0.38038038038038	-6\\
0.449449449449449	-6\\
0.45045045045045	3\\
0.519519519519519	3\\
0.520520520520521	8\\
0.62962962962963	8\\
0.630630630630631	1\\
0.77977977977978	1\\
0.780780780780781	7\\
0.8998998998999	7\\
0.900900900900901	-3\\
1	-3\\
};

\addplot [color=mycolor1,solid,line width=1.5pt,forget plot]
  table[row sep=crcr]{%
0	-7.66092233612626\\
0.029029029029029	-7.66092233612626\\
0.03003003003003	3.42497544479183\\
0.032032032032032	3.42497544479183\\
0.033033033033033	5.41625836293544\\
0.036036036036036	5.41625836293544\\
0.037037037037037	6.71166106155351\\
0.0820820820820821	6.71166106155351\\
0.0830830830830831	6.39359809173781\\
0.0990990990990991	6.39359809173781\\
0.1001001001001	1.37564853019105\\
0.101101101101101	1.37564853019105\\
0.102102102102102	-4.44796654431455\\
0.119119119119119	-4.44796654431455\\
0.12012012012012	-5.12777658008614\\
0.146146146146146	-5.12777658008614\\
0.147147147147147	-5.74007525951299\\
0.152152152152152	-5.74007525951299\\
0.153153153153153	-12.5362788239431\\
0.2002002002002	-12.5362788239431\\
0.201201201201201	-1.08444552448558\\
0.211211211211211	-1.08444552448558\\
0.212212212212212	-0.506988989989414\\
0.253253253253253	-0.506988989989414\\
0.254254254254254	2.79312141185089\\
0.315315315315315	2.79312141185089\\
0.316316316316316	-0.0687269461230024\\
0.369369369369369	-0.0687269461230024\\
0.37037037037037	-0.126944876549684\\
0.382382382382382	-0.126944876549684\\
0.383383383383383	-5.59198535649894\\
0.448448448448448	-5.59198535649894\\
0.449449449449449	-3.99215517370783\\
0.451451451451451	-3.99215517370783\\
0.452452452452452	0.76815537890766\\
0.459459459459459	0.76815537890766\\
0.46046046046046	3.23528036778794\\
0.523523523523523	3.23528036778794\\
0.524524524524524	7.58368711003963\\
0.53953953953954	7.58368711003963\\
0.540540540540541	7.81471952908762\\
0.621621621621622	7.81471952908762\\
0.622622622622623	6.81657346392052\\
0.623623623623624	6.81657346392052\\
0.624624624624625	3.44780870815387\\
0.631631631631632	3.44780870815387\\
0.632632632632633	2.36232718500623\\
0.648648648648649	2.36232718500623\\
0.64964964964965	1.30145517601352\\
0.678678678678679	1.30145517601352\\
0.67967967967968	1.21203845359852\\
0.770770770770771	1.21203845359852\\
0.771771771771772	1.26983635990426\\
0.776776776776777	1.26983635990426\\
0.777777777777778	1.88800026778653\\
0.781781781781782	1.88800026778653\\
0.782782782782783	6.17426928064825\\
0.7997997997998	6.17426928064825\\
0.800800800800801	6.70481795829099\\
0.835835835835836	6.70481795829099\\
0.836836836836837	6.8494112434135\\
0.900900900900901	6.8494112434135\\
0.901901901901902	1.22449229510369\\
0.904904904904905	1.22449229510369\\
0.905905905905906	-3.07388395228273\\
1	-3.07388395228273\\
};
\end{axis}
\end{tikzpicture}

\vspace*{.5em}
\centering
\raisebox{4em}[0pt][0pt]{\ \begin{subfigure}[c]{.2\textwidth}%
\caption{Data:\\
\({ f^\dagger }\) (dashed black);\\
\({ f^\delta }\) (solid blue).}
\end{subfigure}}%
\quad%
\centering%
\setlength\figureheight{.25\linewidth}%
\setlength\figurewidth{.75\textwidth}%
\input{tikz/volterra_abel_data.tikz}
\caption{TV~denoising for the convolutional Abel operator~\eqref{eq:op-Volterra-convolution}--\eqref{eq:abel-kernel} with \({ s = 1/3 }\).}\label{fig:volterra-abel}
\end{figure}

In the second example, we employ the exponential kernel~\({ x \mapsto \exp(- x / 10) }\) and consider substantially noisy data,
as seen in \cref{fig:volterra-exp}.
Again we find the performance to be pretty good,
although there are some deviations from the true jump heights for jumps occurring close to another one.

\begin{figure}[h!]
\captionsetup{format=hang}
\vspace*{0em}%
\centering%
\raisebox{4em}[0pt][0pt]{\begin{subfigure}[c]{.2\textwidth}%
\caption{Solutions:\\
\({ u^\dagger }\) (dashed black); \\
\({ u_\alpha^\delta }\) (solid red).}
\end{subfigure}}%
\quad%
\centering%
\setlength\figureheight{.2\linewidth}%
\setlength\figurewidth{.75\textwidth}%
%
%
\colorlet{mycolor1}{WildStrawberry}%
\colorlet{mycolor2}{artcolortext}%
\begin{tikzpicture}

\begin{axis}[%
width=\figurewidth,
height=\figureheight,
at={(0\figurewidth,0\figureheight)},
scale only axis,
hide axis,
xmin=-0.025,
xmax=1.025,
ymin=-7.5,
ymax=5.4,
]
\draw [arrows = {|-Stealth}] (0,-7) -- (1,-7) node [midway, fill=artcolorbg] {\footnotesize Time \({ t }\)};
\addplot [color=mycolor2,dashed,line width=2.0pt,forget plot]
  table[row sep=crcr]{%
0	-2.6\\
0.0690690690690691	-2.6\\
0.0700700700700701	1.4\\
0.129129129129129	1.4\\
0.13013013013013	-3.6\\
0.149149149149149	-3.6\\
0.15015015015015	4.4\\
0.229229229229229	4.4\\
0.23023023023023	-5.6\\
0.249249249249249	-5.6\\
0.25025025025025	2.4\\
0.399399399399399	2.4\\
0.4004004004004	-1.8\\
0.439439439439439	-1.8\\
0.44044044044044	0.3\\
0.64964964964965	0.3\\
0.650650650650651	-4\\
0.72972972972973	-4\\
0.730730730730731	-0.9\\
0.77977977977978	-0.9\\
0.780780780780781	1.2\\
0.94994994994995	1.2\\
0.950950950950951	-3\\
1	-3\\
};

\addplot [color=mycolor1,solid,line width=1.5pt,forget plot]
  table[row sep=crcr]{%
0	-2.63380426054901\\
0.0630630630630631	-2.63380426054901\\
0.0640640640640641	-1.12338223017403\\
0.0690690690690691	-1.12338223017403\\
0.0700700700700701	-0.00408820990606939\\
0.0780780780780781	-0.00408820990606939\\
0.0790790790790791	0.988619240359247\\
0.127127127127127	0.988619240359247\\
0.128128128128128	0.415818079350266\\
0.13013013013013	0.415818079350266\\
0.131131131131131	-2.24627487742504\\
0.132132132132132	-2.5414042195664\\
0.151151151151151	-2.5414042195664\\
0.152152152152152	2.0997683371987\\
0.153153153153153	2.76168466649047\\
0.154154154154154	4.2130356197868\\
0.215215215215215	4.2130356197868\\
0.216216216216216	4.11570432785356\\
0.225225225225225	4.11570432785356\\
0.226226226226226	2.50371650605971\\
0.23023023023023	2.50371650605971\\
0.231231231231231	-3.14048119214454\\
0.233233233233233	-3.14048119214454\\
0.234234234234234	-4.21941800026123\\
0.249249249249249	-4.21941800026123\\
0.25025025025025	-4.08516671569339\\
0.252252252252252	-4.08516671569339\\
0.253253253253253	2.38675960238296\\
0.33033033033033	2.38675960238296\\
0.331331331331331	2.16231682799792\\
0.399399399399399	2.16231682799792\\
0.4004004004004	0.477284082329938\\
0.401401401401401	0.076349912078143\\
0.405405405405405	0.076349912078143\\
0.406406406406406	-1.16789013244903\\
0.441441441441441	-1.16789013244903\\
0.442442442442442	-1.01741328520616\\
0.445445445445445	-1.01741328520616\\
0.446446446446446	-0.353152124954579\\
0.455455455455455	-0.353152124954579\\
0.456456456456456	0.357970863730537\\
0.647647647647648	0.357970863730537\\
0.648648648648649	-3.34033986908902\\
0.654654654654655	-3.34033986908902\\
0.655655655655656	-3.82325320867756\\
0.72972972972973	-3.82325320867756\\
0.730730730730731	-1.7742378314538\\
0.733733733733734	-1.7742378314538\\
0.734734734734735	-0.952767550086556\\
0.736736736736737	-0.952767550086556\\
0.737737737737738	-0.889323968782782\\
0.73973973973974	-0.889323968782782\\
0.740740740740741	-0.771043896304462\\
0.762762762762763	-0.771043896304462\\
0.763763763763764	-0.518575527165401\\
0.77977977977978	-0.518575527165401\\
0.780780780780781	-0.258193148860307\\
0.787787787787788	-0.258193148860307\\
0.788788788788789	1.12206261172342\\
0.946946946946947	1.12206261172342\\
0.947947947947948	0.454446282788835\\
0.948948948948949	-0.944663893356262\\
0.952952952952953	-0.944663893356262\\
0.953953953953954	-1.93434649033625\\
0.956956956956957	-1.93434649033625\\
0.957957957957958	-2.74737484182129\\
1	-2.74737484182129\\
};
\end{axis}
\end{tikzpicture}%

\vspace*{.5em}
\centering
\raisebox{4em}[0pt][0pt]{\ \begin{subfigure}[c]{.2\textwidth}%
\caption{Data:\\
\({ f^\dagger }\) (dashed black);\\
\({ f^\delta }\) (solid blue).}
\end{subfigure}}%
\quad%
\centering%
\setlength\figureheight{.25\linewidth}%
\setlength\figurewidth{.75\textwidth}%
\input{tikz/volterra_exp_data.tikz}
 \caption{TV~denoising for the Volterra operator~\eqref{eq:op-Volterra-convolution} with exponential kernel~\({ x \mapsto \exp(-t/10) }\).}\label{fig:volterra-exp}
\end{figure}

\section{A parameter-identification problem for a semilinear parabolic PDE}
\label{sec:param-id-problem}

\subsection{Setup}

\enlargethispage{\baselineskip} 

We now provide an application of subgradient-based Lavrentiev regularisation~\eqref{eq:subgradient-lavrentiev-reg} to a class of parameter-identification problems for semilinear parabolic PDEs.
To that end, let \({ \Omega \subset \R^d }\) be a Lipschitz domain,
\({ I \coloneqq (0, 1) }\) denote the unit interval,
\({ g \in \textnormal{L}^\infty(\Omega) }\) be some given initial data, and
\begin{equation} \label{eq:PDE-nonlinearity-growth}
\varphi(y) \coloneqq \sign(y) \abs{ y }^{ \lambda } \qquad \text{for some } \lambda > 0
\end{equation}
be a strictly monotone nonlinearity. With \({ \X \cong \X^* }\) denoting the Hilbert space \({ \textnormal{L}^2(I \times \Omega) }\), we then define \({ \mathcal{A} \colon \X \to \X }\) to be the operator that maps \({ u \in \X }\) to the weak solution of the PDE
\begin{equation}\label{eq:PDE}
  \left\{\begin{aligned}
    y_t - \upDelta y + \varphi(y) &= u && \text{ in } I \times \Omega; \\
    \partial_n y &= 0 && \text{ on } I \times \partial\Omega; \\
    y(0, \cdot) &= g && \text{ on } \Omega.
  \end{aligned}\right.
\end{equation}
Note that this equation can alternatively be written as
the gradient flow
\begin{equation} \label{eq:gradient-flow}
y_t + \partial G(y) \ni u,
  \qquad\qquad
  y(0) = g,
\end{equation}
where \({ G \colon \textnormal{L}^2(\Omega) \to (- \infty, \infty] }\) is the convex and lower-semicontinuous functional
\begin{equation*}
  G(y) \coloneqq 
  \begin{cases}
    \displaystyle\int_\Omega \left( \tfrac{1}{2}\abs{\nabla y}^2 + \tfrac{1}{\lambda + 1} \abs{y}^{\lambda + 1 }\right) \dee x
    & \text{ if } y \in \textnormal{H}^1(\Omega)\cap \textnormal{L}^{\lambda + 1}(\Omega); \\[1ex]
    +\infty & \text{ else.}
  \end{cases}
\end{equation*}
Consequently, the standard theory on gradient flows
in Hilbert spaces (see \autocite{Bre1973a} or~\autocite[Theorems~4.2, 4.5 and~4.11]{Bar2010a}) implies that the PDE~\eqref{eq:PDE}
has a unique (mild/strong) solution in~\({ \X }\) that depends continuously on~\({ u }\) with respect to strong convergence in~\({ \X }\).
Put differently, the operator~\({ \mathcal{A} }\) is well-defined and continuous.
In addition, analogous to the standard proof of uniqueness for~\eqref{eq:gradient-flow}, one finds the comparison principle
\begin{equation} \label{eq:comparison-principle}
y \leq z \quad \text{a.e.\@ in } I \times \Omega \qquad \text{whenever} \qquad u \leq v \text{ and } g \leq h \quad\text{ a.e.,}
\end{equation}
where \({ y = \mathcal{A}u }\) and \({ z = \mathcal{A}v }\) solve~\eqref{eq:PDE} with initial data \({ g }\) and \({ h }\), respectively.
Moreover, in the case \({ g = h }\), one easily obtains the equality
\begin{equation*}
\dualitypairing{ y - z }{ u - z } = \tfrac{ 1 }{ 2 } \int_{ \Omega } \bigl( y(1, \cdot) - z(1, \cdot) \bigr)^2 \dee x + \int_{ I } \int_{ \Omega } \left( \abs{ \nabla(y - z) }^{ 2 } + (\varphi(y) - \varphi(z))(y - z) \right) \dee x \dee t,
\end{equation*}
which in view of the strict monotonicity of~\({ \varphi }\) shows that~\({ \mathcal{A} }\) is strictly monotone.

We now consider the parameter-identification problem of solving \({ \mathcal{A}(\overline{u}) = y^\delta }\) given possibly noisy data~\({ y^\delta \in \X }\).
For the stable solution of this problem,
we assume that the true solution associated with the noise-free data~\({ y^\dagger }\) has the form
\begin{equation*}
\overline{u}^\dagger = u^\dagger + f,
\end{equation*}
where \({ f \in \textnormal{L}^\infty(I \times \Omega) }\) is a known function
and \({ u^\dagger \in \textnormal{BV}(I\times\Omega) }\) is an unknown function that is constant in the spatial variable---that is, there exists \({ v^\dagger \in \textnormal{BV}(I)}\) such that
\begin{equation*}
u^\dagger(t, x) = v^\dagger(t) \qquad \text{for a.e.\@ } (t, x) \in I \times \Omega.
\end{equation*}
In other words, \({ u^\dagger = \imath(v^\dagger) }\), where
\begin{equation*}
\imath \colon \textnormal{L}^2(I) \to \X
\end{equation*}
is the natural embedding.
With \({ \mathcal{R} \colon \X \to [0, \infty] }\) defined by
\begin{equation*}
  \mathcal{R}(u) \coloneqq
  \begin{cases}
    \seminorm{u}_{\textnormal{TV}(I \times \Omega)} & \text{ if } u \in \textnormal{BV}(I \times \Omega)\cap \range \imath ; \\
    +\infty &\text{ else,}
  \end{cases}
\end{equation*}
we see that subgradient-based Lavrentiev regularisation~\eqref{eq:subgradient-lavrentiev-reg} adapted to this context requires finding the solution~\({ u_ \alpha^\delta }\) of the inclusion
\begin{equation}\label{eq:Lav_parid}
  \mathcal{A}(u_\alpha^\delta + f) + \alpha \partial\mathcal{R}(u_\alpha^\delta) \ni y^\delta.
\end{equation}

In order to show that this is a well-posed regularisation method,
we need to verify that \Cref{assumptions} is satisfied.
The requirements on the space~\({ \X }\) and the regulariser~\({ \mathcal{R} }\) are directly satisfied,
and, as discussed above, the operator~\({ \mathcal{A} }\) is both continuous and strictly monotone on~\({ \X }\).
Accordingly, it remains to establish the coercivity and the
growth conditions in \Cref{assumptions}~\ref{assumptions:coercivity} and~\ref{assumptions:growth}.

\subsection{Verification of the conditions}

We assume for simplicity that \({ \abs{ \Omega } = 1 }\).
Then \({ \norm{\imath v}_{ \X} = \norm{v}_{ \textnormal{L}^2(I)}}\) and \({ \mathcal{R}(\imath v) = \seminorm{v}_{ \textnormal{TV}(I)} }\) for all~\({ v \in \textnormal{L}^2(I) }\).
By compactness of \({ \textnormal{BV}(I) }\) in \({ \textnormal{L}^2(I) }\),
it is then evident that the sublevel set 
\begin{equation*}
\widetilde{W} \coloneqq \Set[\big]{ v \in \textnormal{L}^2(I) \given \norm{ v }_{ \textnormal{L}^2(I) } + \seminorm{ v }_{ \textnormal{TV}(I)} \leq C }
\end{equation*}
is compact for every~\({ C > 0 }\),
and thus the same is true for
\begin{equation*}
W \coloneqq \imath \widetilde{W} = \Set[\big]{ u \in \X \given \norm{ u }_{ \X } + \mathcal{R}(u) \leq C },
\end{equation*}
which shows that \Cref{assumptions}~\ref{assumptions:coercivity} holds.

Now let \({ C > 0 }\) and define
\[
  U_C \coloneqq \Set[\big]{ u \in \X \given \norm{u-u^\dagger}_{\X} = 1 \; (= t_0) \text{ and } \mathcal{R}(u) \leq C}.
\]
Moreover, for \({u \in U_C }\) and \({r > 0 }\), let
\({u_r \coloneqq ru + (1-r)u^\dagger }\).
We then need to prove that
\begin{equation*}
\inf_{u\in U}\dualitypairing*{\mathcal{A}(u_r)}{u - u^\dagger} \to \infty \quad \text{as } r \to \infty.
\end{equation*}
Introducing
\[
  V \coloneqq \Set[\big]{ u \in L^2(I) \given \norm{v-v^\dagger}_{L^2(I)} = 1 \text{ and } \seminorm{v}_{\textnormal{TV}} \leq C}
\]
and
\[
  v_r \coloneqq rv + (1-r)v^\dagger
\]
for \({v \in V}\) and \({r > 0}\),
we see that this is equivalent to showing that
\begin{equation*}
\inf_{v\in V}\dualitypairing*{\mathcal{A}(\imath v_r)}{\imath(v - v^\dagger)} \to \infty \quad \text{as } r \to \infty.
\end{equation*}
The proof of this result will be based on comparison principles and relating
solutions of the PDE~\eqref{eq:PDE} with the flow of an ODE.

\begin{remark}
In the remaining text we identify \({ v \in \textnormal{BV}(I) }\) with any of its \textit{good representatives}~\({ \widetilde{ v } }\),
for which the measure-theoretic TV~\eqref{eq:total-variation-regulariser} coincides with the classical pointwise TV.
Here \({ \widetilde{ v } \colon I \to \R }\) is pointwise defined and assumes a value between (typically the average of) the one-sided essential limits
\begin{equation*}
v_{ \textnormal{left}}(t) \coloneqq \esslim_{ s \nearrow t } v(s) \qquad \text{and} \qquad v_{ \textnormal{right}}(t) \coloneqq \esslim_{ s \searrow t } v(s)
\end{equation*}
for every~\({ t \in I }\), so that \({ \widetilde{ v } }\) agrees with the essential pointwise limit of \({ v }\) wherever \({ v }\) is essentially continuous.
\end{remark}

To this end, for every \({w\in \textnormal{L}^1(I)}\) let \({ z_{ w } \colon I \to \R }\) denote the unique mild solution of the ODE
\begin{equation} \label{eq:ODE}
z_{w}' + \varphi(z_{w}^{}) = w,
  \qquad
  z_{w}^{}(0) = 0
\end{equation}
provided by the theory of gradient flows~\autocite[Theorem~4.11]{Bar2010a},
where we remember that \({ \varphi }\) is the subgradient of \({ \frac{1}{p + 1} \abs{  }^{ p + 1 }  }\).
Observe also that if \({ w }\) is continuous,
the Peano existence theorem would have guaranteed a global (since \({ \varphi }\) is weakly coercive) solution in~\({ I }\)---in spite of \({ (t, z) \mapsto - \varphi(z) + w(t) }\) failing to be locally Lipschitz continuous in~\({ z }\).
Moreover, uniqueness follows from the fact that \({ - \varphi }\) is decreasing.

We then show that condition~\eqref{eq:coercivity-growth} for the PDE~\eqref{eq:PDE} may be reduced to that of the ODE~\eqref{eq:ODE}.

\enlargethispage{\baselineskip} 

\begin{lemma}\label{thm:PDEestimate}
There exists a constant~\({ K > 0 }\) independent of \({ r > 0 }\) and \({ v \in V }\) such that
\begin{equation*}
\abs{z_{r(v-v^\dagger)}(t) - \mathcal{A}(\imath v_r)(t, x)} \leq K
\end{equation*}
for almost every \({ (t, x) \in I \times \Omega}\).
\end{lemma}

\begin{proof}
Define
\begin{equation*}
\begin{aligned}
      f^+ &:= \max \Set{ 0, \esssup (f+v^\dagger) }, & g^+ &:= \max \Set{ 0, \esssup g },\\
      f^- &:= \min \Set{ 0, \essinf (f+v^\dagger) },& \text{ and }\quad g^- &:= \min \Set{ 0, \essinf g },
    \end{aligned}
\end{equation*}
and for fixed \({ r > 0 }\) and \({ v \in V }\) denote by \({ y_{r, v}^\pm }\) the solutions of the PDEs
 \begin{equation*}
 \left\{\begin{aligned}
      \partial_t y_{r,v}^\pm - \upDelta y_{r,v}^\pm + \varphi(y_{r,v}^\pm)
      &= f^\pm+ r(v(t)-v^\dagger(t)) && \text{ in } I\times \Omega; \\[1ex]
      \partial_n y_{r,v}^\pm &= 0 && \text{ on } I \times \partial\Omega; \\[1ex]
      y_{r,v}^\pm(0,\cdot) &= g^\pm && \text{ on } \Omega.
    \end{aligned}\right.
\end{equation*}
Recall here that \({v^\dagger \in \textnormal{BV}(I) \subset \textnormal{L}^\infty(I)}\),
\({f \in \textnormal{L}^\infty(I\times\Omega)}\), and \({g \in \textnormal{L}^\infty(\Omega)}\),
which implies that \({f^\pm}\) and \({g^\pm}\) are finite numbers.
Then the comparison principle~\eqref{eq:comparison-principle} yields that
\begin{equation*}
y_{r, v}^-(t, x) \leq \mathcal{A}(\imath v_r)(t, x) \leq y_{r, v}^+(t, x)
\end{equation*}
for almost every \({ (x, t) \in I \times \Omega }\).
In addition, it is straightforward to see that \({ y_{r, v}^\pm }\) actually are constant with respect to the \({ x }\)-variable
and can be written in the form \({ y_{r, v}^\pm(t, x) = z_{r,v}^\pm(t) }\),
where the functions \({ z_{r, v}^\pm \colon I \to \R }\) solve the ODEs
\begin{equation*}
    \frac{\dee z_{r, v}^\pm}{\dee t} + \varphi(z_{r, v}^\pm) = f^\pm + r(v-v^\dagger),
    \qquad
    z_{r ,v}^\pm(0) = g^\pm.
\end{equation*}
  Thus
\begin{equation} \label{eq:ODE-upper-bnd}
    \frac{\dee}{\dee t}(z_{r, v}^+ - z_{r, v}^-)
    = f^+ - f^- - \bigl(\varphi(z_{r, v}^+) - \varphi(z_{r, v}^-)\bigr)
    \leq f^+ - f^-,
\end{equation}
  which implies that
\begin{equation*}
    \abs[\big]{z_{r, v}^+(t) - z_{r, v}^-(t)} \leq g^+ - g^- + t(f^+-f^-)
\end{equation*}
  for all \({ t \in I }\).
  Since we also have the ODE-variant
\begin{equation*}
    z_{r, v}^-(t) \leq z_{r, v}^{}(t) \leq z_{r, v}^+(t)
\end{equation*}
of the comparison principle~\eqref{eq:comparison-principle},
we obtain the assertion with \({ K = g^+ - g^- + f^+ - f^- }\).
\end{proof}

As a consequence of \cref{thm:PDEestimate}, it suffices to show that
\begin{equation*}
\adjustlimits \lim_{r\to\infty} \inf_{v \in V} \dualitypairing*{z_{r, v}}{v-v^\dagger}_{ \textnormal{L}^2(I)} = +\infty.
\end{equation*}
On this path, we estimate
\begin{equation} \label{eq:ODE-growth-estimate}
  \inf_{v\in V} \dualitypairing*{z_{r, v}}{rv - v^\dagger}_{ \textnormal{L}^2(I)}
  \geq \inf_{v\in V} \dualitypairing*{z_{r, v}}{rv}_{ \textnormal{L}^2(I)} - \sup_{v\in V} \norm{z_{r, v}}_{ \textnormal{L}^1(I)} \norm{v^\dagger}_{ \textnormal{L}^\infty(I)},
\end{equation}
and note that for all \({ v \in V }\) we have
\begin{equation} \label{eq:supremum-bound}
\norm{ v }_{ \textnormal{L}^\infty(I) } \leq \norm{ v }_{ \textnormal{L}^1(I) } + \seminorm{v}_{ \textnormal{TV}(I)} \leq \norm{v}_{ \textnormal{L}^2(I)} + \seminorm{v}_{ \textnormal{TV}(I)} \leq 1 + C.
\end{equation}
Thus we may estimate
\begin{equation*}
-(1 + C)rt \leq z_{-r, (1 + C)}(t) \leq z_{r,v}(t) \leq z_{r, (1 + C)}(t) \leq (1 + C)rt
\end{equation*}
for all \({ r > 0 }\) and \({ v \in V }\), similarly as~\eqref{eq:ODE-upper-bnd}, which leads to
\begin{equation*}
\sup_{v\in V} \norm{z_{r, v}}_{ \textnormal{L}^1(I)} \leq \tfrac{1}{2} r(1 + C),
\end{equation*}
expressing that the last term in~\eqref{eq:ODE-growth-estimate} has at most linear rate in~\({ r }\).
Since moreover
\begin{equation*}
  \dualitypairing*{z_{r, v}}{rv}_{ \textnormal{L}^2(I)}
  = \tfrac{1}{2}\abs*{z_{r, v}(1)}^2 + \dualitypairing*{\varphi(z_{r,v})}{z_{r,v}}_{ \textnormal{L}^2(I)}
  \geq \dualitypairing*{\varphi(z_{r, v})}{z_{r,v}}_{ \textnormal{L}^2(I)},
\end{equation*}
it is sufficient to show that
\begin{equation*}
  r \mapsto \inf_{v \in V}\dualitypairing*{\varphi(z_{r,v})}{z_{r,v}}_{ \textnormal{L}^2(I)}
\end{equation*}
grows superlinearly.

\begin{lemma}\label{thm:PDEgrowth}
There exist numbers \({ c > 0 }\) and \({ \tau > 0 }\), such that for each \({ v \in V }\)
there exists an interval \({ J_v \coloneqq \left( t_v - 2 \tau, t_v + 2 \tau \right) \subseteq I }\)
with either \({ v(t)-v^\dagger(t) \geq c }\) for all \({ t \in J_v }\)
or \({ v(t)-v^\dagger(t) \leq - c }\) for all~\({ t \in J_v }\).
\end{lemma}

\begin{proof}
Due to the weak compactness of \({ V }\) in \({ \textnormal{L}^2(I) }\),
the convex and lower-semicontinuous function \({ v \mapsto \norm{ v-v^\dagger }_{ \textnormal{L}^1(I) } }\) attains its minimum on~\({ V }\),
which must be nonzero in light of the definition of~\({ V }\).
Thus there exists an \({ a > 0 }\) such that \({ \norm{ v-v^\dagger }_{ \textnormal{L}^1(I) } \geq 2a }\) for all \({ v \in V }\).
Since \({ \abs{ I } = 1 }\),
we find from~\eqref{eq:supremum-bound} that
\begin{equation*}
\abs*{ \Set[\big]{ t \in I \given \abs{ v(t)-v^\dagger(t) } \geq a } } \geq \frac{ a }{ \norm{ v-v^\dagger }_{ \textnormal{L}^\infty(I)} }  \geq \frac{ a }{ 1 + C + \seminorm{v^\dagger}_{\textnormal{BV}}} \eqqcolon 2m.
\end{equation*}
Consequently, at least one of the sets \({ \Set{ t \given  v(t) -v^\dagger(t)\geq a } }\) and \({ \Set{ t \given v(t)-v^\dagger(t) \leq - a } }\) has measure greater than or equal to~\({ m }\),
and without loss of generality we may assume the former.

Denote by \({ A_k }\),
for \({ k }\) in some countable index set~\({ K }\),
the connected components of the (possibly larger) set \({ \Set{ t \given v(t) \geq a / 2 } }\),
and let \({ K' \subseteq K }\) be the set of indices for which \({ \sup_{ t \in A_k} v(t) \geq a }\).
Note that for any distinct \({ k_1, k_2 \in K' }\), the total variation of \({ v }\) on the convex hull of \({ A_{k_1} \cup A_{k_2} }\) is at least~\({ a }\),
because \({ v }\) must pass from \({ a }\) to \({ a/2 }\) and back to \({ a }\) again.
Since \({ \seminorm{v}_{ \textnormal{TV}(I)} \leq C }\),
this implies that \({ \abs{ K' } \leq \frac{ C }{ a } + 1 \eqqcolon n }\),
which also covers the trivial case with \({ v }\) being constant (\({ C = 0 }\)).
On the other hand, we have that \({ \sum_{ k \in K' } \abs{ A_k } \geq m }\) by construction.
Thus there exists at least one connected component~\({ A_k }\) for which~\({ \abs{ A_k } \geq m / n }\).

Hence, the assertion holds with \({ c = a / 2 }\) and \({ \tau = m / n }\), both being independent of~\({ v \in V }\).
\end{proof}

\begin{lemma}\label{thm:wr}
Let \({ c }\) and \({ \tau }\) be as in \cref{thm:PDEgrowth}
and denote by \({ w_r \colon \R \to \R }\) the solution of the ODEs
\begin{equation*}
    w_r' = r - \varphi(w_r^{}),
    \qquad
    w_r^{}(0) = 0.
\end{equation*}
  Then
  \begin{equation}\label{eq:w_estimate}
    \inf_{v \in V}\dualitypairing*{\varphi(z_{r, v})}{z_{r, v}}_{ \textnormal{L}^2(I)} \geq 
    \tau \, w_{rc}(\tau) \, \varphi(w_{rc}(\tau))
  \end{equation}
for all~\({ r > 0 }\).
\end{lemma}

\begin{proof}
Let \({ v \in V }\) and \({ c }\), \({ \tau }\), and \({ J_v = (t_v - 2 \tau, t_v + 2 \tau) }\) be as in \cref{thm:PDEgrowth}.
Then either \({ v(t) \geq c }\) for all \({ t \in J_v }\) or \({ v(t) \leq - c }\) for all \({ t \in J_v }\).
We assume and prove the result for the former case;
the argument for the other situation is similar.
In particular, we find from~\eqref{eq:ODE} that \({ z_{r, v}' \geq cr - \varphi(z_{ r, v}^{}) }\) on \({ J_v }\) also.

If \({ z_{r, v}(t_v) \geq 0 }\), then \({ z_{r, v}(t) \geq \widetilde{ w }(t) }\) for all \({ t \in [t_v, t_v + 2 \tau] }\),
where \({ \widetilde{ w } }\) solves the initial-value problem
  \begin{equation}\label{eq:tildew}
    \widetilde{w}' = cr - \varphi(\widetilde{w}),
    \qquad
    \widetilde{w}(t_v) = 0.
  \end{equation}
Moreover, \({ \widetilde{ w } }\) is nonnegative and increasing on~\({ [t_v, t_v + 2 \tau] }\).
Since \({ \varphi }\) is increasing and \({ \varphi(0) = 0 }\), this leads to
\begin{equation*}
\dualitypairing*{ z_{r, v} }{ \varphi( z_{r, v}) }_{ \textnormal{L}^2(I)}
\geq \int_{ t_v + \tau }^{ t_v + 2 \tau } \widetilde{ w }(t) \, \varphi(\widetilde{ w }(t)) \dee t
\geq \tau \, \widetilde{ w }(t_v + \tau) \, \varphi( \widetilde{ w }(t_v + \tau)).
\end{equation*}
Now we note that \({ \widetilde{ w }(t_v + \tau) = w_{rc}(\tau) }\), which proves~\eqref{eq:w_estimate}.

If, however, \({ z_{r, v}(t_v) \leq 0 }\), then \({ z_{r, v}(t) \leq \widetilde{ w }(t) }\) for all~\({ t \in [t_v - 2 \tau, t_v] }\),
where \({ \widetilde{ w } }\) again solves~\eqref{eq:tildew}, but now considered as a final-value problem.
Similarly as before, we find that
\begin{align*}
\dualitypairing*{ z_{r, v} }{ \varphi( z_{r, v}) }_{ \textnormal{L}^2(I)}
\geq \int_{ t_v - 2 \tau }^{ t_v - \tau } \widetilde{ w }(t) \, \varphi(\widetilde{ w }(t)) \dee t
&\geq \tau \, \widetilde{ w }(t_v - \tau) \, \varphi(\widetilde{ w }(t_v - \tau)) \\[1ex]
&= \tau \, w_{rc}(-\tau) \, \varphi(w_{rc}(- \tau)),
\end{align*}
and this establishes~\eqref{eq:w_estimate} by noting that \({ w_{rc}(- \tau) = - w_{rc}(\tau) }\) and \({ \varphi(w_{rc}(- \tau)) = - \varphi(w_{rc}(\tau)) }\) due to antisymmetry of~\({ \varphi }\).
\end{proof}

As a consequence of \cref{thm:wr},
it is sufficient to show that the function \({ r \mapsto w_r(\tau) \,\varphi(w_r(\tau)) }\) grows superlinearly as \({ r \to \infty }\).
To that end, we distinguish between the two cases \({ 0 < \lambda \leq 1 }\) and \({ \lambda > 1 }\) concerning the growth~\eqref{eq:PDE-nonlinearity-growth} of~\({ \varphi }\).

If \({ 0 < \lambda \leq 1 }\), then \({ \varphi(z) \leq 1 + z }\) for all~\({ z \geq 0 }\).
Thus \({ w_r }\) is bounded below by the solution of the ODE
\begin{equation*}
y_r' = r - 1 - y_r, \qquad y_r(0) = 0,
\end{equation*}
and a brief computation reveals that \({ y_r(t) = (r-1) \left( 1 - \textnormal{e}^{ - t }  \right)  }\).
Accordingly,
\begin{equation*}
w_r(\tau) \, \varphi(w_r(\tau)) = w_r(\tau)^{1 + \lambda} \geq y_r(\tau)^{1 + \lambda} \gtrsim r^{1 + \lambda}
\end{equation*}
as \({ r \to + \infty }\), proving the superlinear growth.

Next we consider the case \({ \lambda > 1 }\),
where we first note that \({ w_r }\) is bounded above by the stationary point \({ \varphi^{-1}(r) = r^{1 / \lambda} }\).
Since \({ r \mapsto \varphi(r) = r^\lambda }\) is convex on~\({ [0, + \infty) }\),
we may estimate
\begin{equation*}
\frac{ r - z^\lambda }{ r^{1 / \lambda} - z } = \frac{ \varphi(r^{1 / \lambda}) - \varphi(z) }{ r^{1 / \lambda} - z } \geq \frac{ \varphi(r^{1 / \lambda}) - \varphi(0) }{ r^{1 / \lambda} - 0 } = r^{ \frac{ \lambda - 1 }{ \lambda }}
\end{equation*}
for \({ 0 \leq z \leq r^{1 / \lambda} }\), which becomes
\begin{equation*}
r - z^\lambda \geq r^{ \frac{ \lambda - 1 }{ \lambda } } \left( r^{1 / \lambda} - z \right).
\end{equation*}
As such, \({ w_r }\) is bounded below by the solution of the ODE
\begin{equation*}
y_r' = r^{ \frac{ \lambda - 1 }{ \lambda } } \left( r^{1 / \lambda} - y_r \right), \qquad y_r(0) = 0.
\end{equation*}
One sees that \({ y_r(t) = r^{1 / \lambda} \bigl( 1 - \exp \bigl( - r^{ \frac{ \lambda - 1 }{ \lambda }} t \bigr) \bigr) }\).
Since the exponential term in \({ y_r }\) tends, for all fixed \({ t > 0 }\), to \({ 0 }\) as \({ r \to +\infty }\),
we thus obtain that
\begin{equation*}
w_r(\tau) \, \varphi(w_r(\tau)) = w_r(\tau)^{1 + \lambda} \geq y_r(\tau)^{1 + \lambda} \gtrsim r^{1 + \frac{ 1 }{ \lambda }}
\end{equation*}
for sufficiently large~\({ r }\),
again resulting in superlinear growth.

Collecting all the results above,
it follows that subgradient-based
Lavrentiev regularisation of the form~\eqref{eq:Lav_parid}
is a well-posed regularisation method.

\subsection{Numerical formulation and example}

We now reformulate the regularisation problem~\eqref{eq:Lav_parid} in order to make it numerically tractable.
To that end, let \({ \pi \coloneqq \imath^* \colon \X \to \textnormal{L}^2(I) }\)
be the projection
\begin{equation*}
  \pi u (t) = \frac{1}{\abs{\Omega}} \int_\Omega u(t, x) \dee x,
\end{equation*}
and define \({ \mathcal{S} \colon \textnormal{L}^2(I) \to [0, \infty] }\) by
\begin{equation*}
  \mathcal{S}(v) \coloneqq 
  \begin{cases}
    \seminorm{v}_{\textnormal{TV}(I)} & \text{ if } v \in \textnormal{BV}(I); \\
    +\infty &\text{ else.}
  \end{cases}
\end{equation*}
Then we have that \({ \mathcal{R} = S \circ \pi + \chi_{ \range \imath} }\),
where
\begin{equation*}
\chi_{ \range \imath}(u) \coloneqq
\begin{cases}
0 & \text{ if } u \in \range \imath;\\
\infty & \text{ else,}
\end{cases}
\end{equation*}
\clearpage\noindent 
is the indicator function on~\({ \range \imath }\).
Since \({ \partial \chi_{ \range \imath}(u) = \ker \pi }\),
subdifferential calculus~\autocite[Thm.~16.47]{BauCom2017a} gives that
\begin{equation*}
  \partial\mathcal{R}(u) =
  \begin{cases}
    \imath\, \partial\mathcal{S}(\pi u) + \ker \pi & \text{ if } u \in \range \imath;\\
    \emptyset & \text{ else.}
  \end{cases}
\end{equation*}
Instead of solving for \({ u_\alpha^\delta \in \X }\),
we may therefore rewrite this as a problem for \({ v_\alpha^\delta \in \textnormal{L}^2(I) }\) with \({ \imath v_\alpha^\delta = u_\alpha^\delta }\).
Thus we arrive at the equivalent reformulation of~\eqref{eq:Lav_parid} as
\({ u_\alpha^\delta = \imath v_\alpha^\delta }\), where \({ v_\alpha^\delta \in \textnormal{L}^2(I) }\) solves the inclusion
\begin{equation*}
  \mathcal{A}(\imath v_\alpha^\delta + f) + \alpha \, \imath\,\partial\mathcal{S}(v_\alpha^\delta) + \ker \pi \ni y^\delta,
\end{equation*}
which can again be rewritten as
\begin{equation*}
  -\pi\bigl(\mathcal{A}(\imath v_\alpha^\delta + f) - y^\delta\bigr) \in \alpha \partial\mathcal{S}(v_\alpha^\delta),
\end{equation*}
using that \({ \pi \circ \imath = \identityoperator_{ \textnormal{L}^2(I)} }\).
Or, introducing the auxiliary variable \({ \xi_\alpha^\delta \in \partial\mathcal{S}(v_\alpha^\delta) }\),
we obtain the system
\begin{equation}\label{eq:reformulation_parid}
\left\{\begin{aligned}
    \pi \mathcal{A}(\imath v_\alpha^\delta + f) + \alpha \, \xi_\alpha^\delta &= \pi y^\delta; \\[1ex]
    \xi_\alpha^\delta & \in \partial\mathcal{S}(v_\alpha^\delta).
  \end{aligned}
  \right.
\end{equation}

\begin{figure}[h!]
\begin{subfigure}[c]{.49\textwidth}%
\centering%
\setlength\figureheight{.99\linewidth}%
\setlength\figurewidth{.99\linewidth}%
\includegraphics[width=\linewidth]{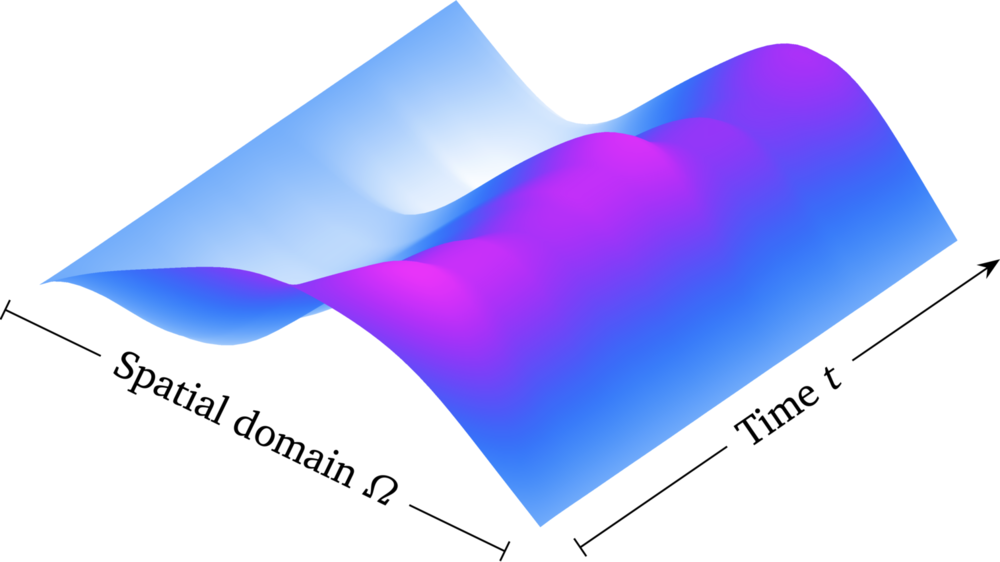}%
\caption{Exact data: \({ y^\dagger }\).}%
\vspace*{2em}%
\end{subfigure}
\begin{subfigure}[c]{.49\textwidth}%
\centering%
\setlength\figureheight{.99\linewidth}%
\setlength\figurewidth{.99\linewidth}%
\includegraphics[width=\linewidth]{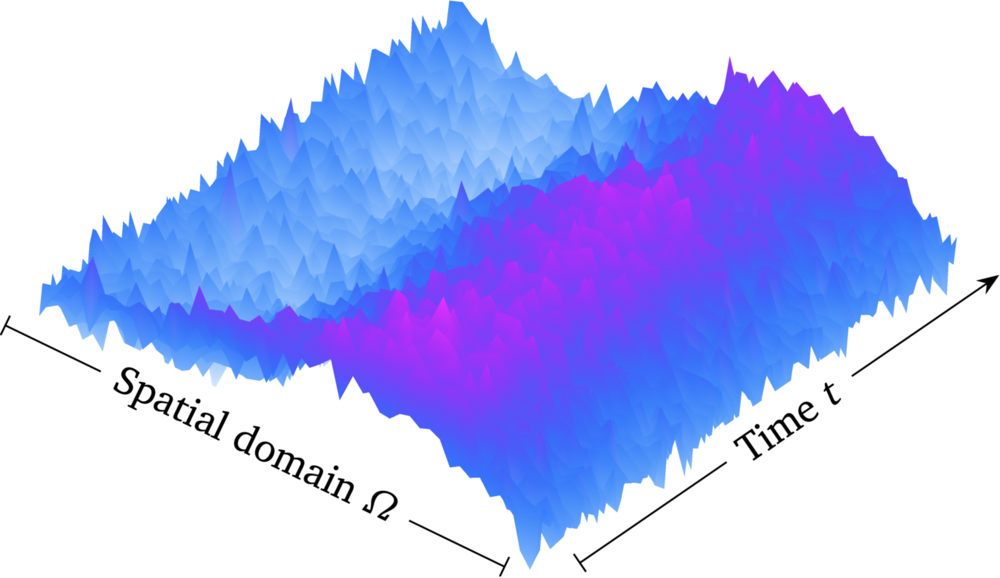}%
\caption{Noisy data: \({ y^\delta }\).}%
\vspace*{2em}%
\end{subfigure}

\captionsetup{format=hang}
\vspace*{0em}%
\centering%
\raisebox{5em}[0pt][0pt]{\ \begin{subfigure}[c]{.22\textwidth}%
\caption{Projected data:\\
\({ \pi y^\dagger }\) (dashed black); \\
\({ \pi y^\delta }\) (solid gray); \\
\({ \pi y_\alpha^\delta }\) (solid red).}
\end{subfigure}}%
\quad%
\centering%
\setlength\figureheight{.25\linewidth}%
\setlength\figurewidth{.75\textwidth}%
\begin{tikzpicture}

\colorlet{mycolor1}{WildStrawberry}%
\colorlet{mycolor2}{artcolortext}%

\begin{axis}[
width=\figurewidth,
height=\figureheight,
at={(0\figurewidth,0\figureheight)},
scale only axis,
hide axis,
xmin=-0.025,
xmax=1.025,
ymin=-.04,
]
\addplot [color=Gray,solid,line width=1.0pt,forget plot] table {python/PDE_projected_data_noisy.txt};
\addplot [color=mycolor1,solid,line width=1.5pt,forget plot] table {python/PDE_projected_data_recon.txt};
\addplot [color=mycolor2,dashed,line width=2.0pt,forget plot] table {python/PDE_projected_data.txt};
\draw [arrows = {|-Stealth}] (0,0.02) -- (1,0.02) node [midway, fill=artcolorbg] {\footnotesize Time \({ t }\)};
\end{axis}%
\end{tikzpicture}

\vspace*{1em}
\centering%
\raisebox{5em}[0pt][0pt]{\begin{subfigure}[c]{.22\textwidth}%
\caption{Solutions:\\
\({ v^\dagger }\) (dashed black);\\
\({ v_\alpha^\delta }\) (solid red).}
\end{subfigure}}%
\quad%
\centering%
\setlength\figureheight{.2\linewidth}%
\setlength\figurewidth{.75\textwidth}%
\begin{tikzpicture}

\colorlet{mycolor1}{WildStrawberry}%
\colorlet{mycolor2}{artcolortext}%

\begin{axis}[
width=\figurewidth,
height=\figureheight,
at={(0\figurewidth,0\figureheight)},
scale only axis,
hide axis,
xmin=-0.025,
xmax=1.025,
ymin=-2.2,
ymax=2.2,
]
\addplot [color=mycolor2,dashed,line width=2.0pt,forget plot] table {python/PDE_control.txt};
\addplot [color=mycolor1,solid,line width=1.5pt,forget plot] table {python/PDE_control_recon.txt};
\end{axis}%
\end{tikzpicture}
\caption{TV~denoising in the parameter-identification problem~\eqref{eq:Lav_parid} with the choice~\eqref{eq:PDE-example} in~\eqref{eq:PDE}. By~\({ y_\alpha^\delta }\) we mean the solution of~\eqref{eq:PDE} corresponding to the reconstruction~\({ u_\alpha^\delta = \imath v_\alpha^\delta }\).}
\label{fig:PDE}
\end{figure}

In order to solve~\eqref{eq:reformulation_parid},
we employ the classical taut-string algorithm~\autocite{DavKov2001a,Gra2007a,Con2013a} for~\({ v_\alpha^\delta }\) on~\({ I }\),
noting that at each step in the algorithm,
one has to solve the PDE~\eqref{eq:PDE} and perform projection onto~\({ \textnormal{L}^2(I) }\).
As an example, we perform TV denoising in~\eqref{eq:PDE} with
\begin{align} \label{eq:PDE-example}
\begin{aligned}
\varphi(y) &= y^3, \\[1ex]
f(x) &= 20\left(\sin(2\uppi x)\right)^3 + 2\sqrt{ x }, \\[1ex] \text{and } g(x)& = \tfrac{ 1 }{ 2 } \sin (\uppi x).
\end{aligned}
\end{align}
The typical performance under moderate gaussian noise in \({ I \times \Omega }\) can be seen in \cref{fig:PDE}.
The PDE-solution~\({ y^\dagger }\) has been computed (from~\({ v^\dagger }\)) using a much finer resolution in order to avoid inverse crimes.
Observe that we essentially only work with the projected noisy data in~\({ I }\) rather than the spatially-dependent data on~\({ I \times \Omega }\),
which explains the adequate reconstructions.

\begin{wrapfigure}{r}{0.5\textwidth}
\vspace*{2em} 
\centering%
\setlength\figureheight{.8\linewidth}%
\setlength\figurewidth{.8\linewidth}%
\begin{tikzpicture}

\colorlet{mycolor1}{artcolor}%
\colorlet{mycolor2}{WildStrawberry}%

\begin{loglogaxis}[
width=\figurewidth,
height=\figureheight,
at={(0\figurewidth,0\figureheight)},
scale only axis,
axis lines=left,
xmin=8e-5,
xmax=0.03,
ymax=.4,
xlabel={\({ \delta }\)},
every axis x label/.style={at={(ticklabel* cs:1)},anchor=west},
legend style={draw=none,fill=none,at={(0.6,0.2)},anchor=west,column sep=1ex}
]
\addplot [color=mycolor1,solid,line width=1.2pt,mark=+] table [x index=0, y index=2] {python/convergence_plot.txt};
\addlegendentry{\({  \textnormal{L}^2 }\) error};
\addplot [color=mycolor1,no marks,dotted,line width=1pt, forget plot] table [x=noise, y={create col/linear regression={y=l2,variance list={70,50,50,50,200,200,400,1000}}}]
        {python/convergence_plot.txt}
            coordinate [pos=0.35] (A)
            coordinate [pos=0.5]  (B)
   ;
    \xdef\slopenew{\pgfplotstableregressiona}
    \draw[color=mycolor1] (A) -| (B) node [color=mycolor1,pos=.75,anchor=east] {\small\({ \approx \pgfmathprintnumber{\slopenew} }\)};
    
\addplot [color=mycolor2,solid,line width=1.2pt,mark=o] table [x index=0, y index=1] {python/convergence_plot.txt};
\addlegendentry{\({ \textnormal{L}^1 }\) error};
\addplot [color=mycolor2,no marks,dotted,line width=1pt,forget plot] table [x=noise, y={create col/linear regression={y=l1,variance list={200,50,50,50,100,300,1000,2000}}}]
        {python/convergence_plot.txt}
            coordinate [pos=0.25] (A)
            coordinate [pos=0.4]  (B)
   ;
    \xdef\slope{\pgfplotstableregressiona}
    \draw[color=mycolor2] (B) -| (A) node [color=mycolor2,pos=.75,anchor=west] {\small\({ \approx \pgfmathprintnumber{\slope}}\)};
\end{loglogaxis}%
\end{tikzpicture}
\caption{Observed convergence rates in the \({ \textnormal{L}^2 }\) and \({ \textnormal{L}^1 }\) norms for the parameter-identification problem~\eqref{eq:Lav_parid} with~\eqref{eq:PDE-example} in~\eqref{eq:PDE}.}
\label{fig:convergence-plot}
\end{wrapfigure}

With regards to convergence rates,
it is not yet clear how one can verify the conditions in \cref{thm:rates-norm}.
Nevertheless, we demonstrate numerical convergence in \cref{fig:convergence-plot},
with estimated rates in the \({ \textnormal{L}^2 }\) (that is,~\({ \X }\)) and \({ \textnormal{L}^1 }\) norms.
Note that the \({ \textnormal{L}^1 }\)~rate is approximately twice that of the \({ \textnormal{L}^2 }\)~rate.
We suspect that this behaviour is due to how
errors in the estimated positions of the jumps of the solution
are treated by the \({ \textnormal{L}^2 }\) and  \({ \textnormal{L}^1 }\) norm, respectively:
If the true solution has a jump at a position~\({ x_0 }\), but
the reconstruction places it at \({x_0 \pm \varepsilon }\),
the resulting \({ \textnormal{L}^1 }\)-error will be of order~\({ \varepsilon^1 }\),
whereas the \({ \textnormal{L}^2 }\)-error is only of order~\({ \varepsilon^{1/2} }\).
We remark also that the deviations for small noise levels~\({ \delta }\) are due to the fact that the PDE-reconstructions~\({ y_\alpha^\delta }\) (the solution of~\eqref{eq:PDE} corresponding to the reconstruction~\({ u_\alpha^\delta = \imath v_\alpha^\delta }\)) catch up with the finite resolution of the reference PDE-solution~\({ y^\dagger }\).

\section{Conclusions and further work}

In this paper we have introduced subgradient-based Lavrentiev regularisation as an alternative to Tikhonov regularisation for linear and nonlinear inverse problems with monotone forward operators.
We have established a well-posedness theory in Banach spaces subject to weak assumptions on both the forward operator and the regulariser,
and proved general convergence-rates results under variational source conditions.
In addition, we have studied examples with TV denoising in convolutional Volterra-integral equations
and nonlinear parameter-identification problems in parabolic PDEs.

These results open up for new investigations.
For instance, one may consider generalisations that include noisy forward operators.
It would also be interesting to study the case of nonreflexive Banach spaces or pseudomonotone operators,
which both seem apparently much harder.
With regards to applications, it is desirable to obtain relevant monotonicity results also for nonlinear integral operators
such as autoconvolution and Hammerstein--Volterra operators, or higher-dimensional extensions.
Finally, we would also like to find practical characterisations of the variational source conditions,
besides adaptive selection criteria for the regularisation parameter.

\appendix

\section{Proof of \cref{thm:alternative-growth-conditions}}
\label{sec:alternative-growth-conditions}

We show that conditions~\eqref{eq:spherical-growth} and~\eqref{eq:directional-growth}
imply the growth condition \Cref{assumptions}~\ref{assumptions:growth}
with \({ t_0 = 1 }\).

For convenience, note first that~\eqref{eq:directional-growth} can equivalently be stated as
\begin{equation} \label{eq:directional-growth-variant}
  \dualitypairing[\big]{\mathcal{A}(u_{\textnormal{ref}} + w) - \mathcal{A}u_{\textnormal{ref}}}{w}
  \geq \gamma(\norm{w}) \dualitypairing[\Big]{\mathcal{A}\Bigl(u_{\textnormal{ref}} + \frac{w}{\norm{w}}\Bigr) - \mathcal{A}u_{\textnormal{ref}}}{ \frac{w}{\norm{w}}}
\end{equation}
for all \({ w \in \X }\) with \({ \norm{ w } }\) sufficiently large.
Now let \({ C > 0 }\) and define
\begin{equation} \label{eq:w-affine}
w_{r,u} \coloneqq u^\dagger - u_{\textnormal{ref}} + r(u-u^\dagger),
\end{equation}
for \({ u \in U_C }\) and \({ r > 0 }\).
By \Cref{assumptions}~\ref{assumptions:regulariser} and~\ref{assumptions:coercivity} the set \({ U_C }\) is compact,
which implies that
\[
  K \coloneqq \Set[\bigg]{ \frac{w_{r,u}}{\norm{w_{r,u}}} \given u \in U_C \text{ and } r \geq \norm{u^\dagger - u_{\textnormal{ref}}} + 1},
\]
is a precompact subset of the unit sphere in~\({ \X }\).
Since \({ \mathcal{A} }\) is demicontinuous (being monotone and hemicontinuous),
the mapping
\[
  v \mapsto \dualitypairing[\big]{\mathcal{A}(u_{\textnormal{ref}}+v) - \mathcal{A}u_{\textnormal{ref}}}{v}
\]
is continuous,
and because of the strict monotonicity of \({ \mathcal{A} }\), it follows that
\[
  c_0 \coloneqq \inf_{v \in K} \dualitypairing[\big]{\mathcal{A}(u_{\textnormal{ref}}+v) - \mathcal{A}u_{\textnormal{ref}}}{v} > 0.
\]

In order to establish~\eqref{eq:coercivity-growth},
we see from~\eqref{eq:w-affine} that \({ ru + (1-r)u^\dagger = u_{\textnormal{ref}} + w_{r,u} }\),
meaning that
\begin{equation*}
\dualitypairing*{\mathcal{A}\bigl(ru + (1-r)u^\dagger \bigr)}{u-u^\dagger}
 = \frac{1}{r}\dualitypairing[\big]{\mathcal{A}(u_{\textnormal{ref}}+w_{r,u})}{w_{r,u}-(u^\dagger - u_{\textnormal{ref}})}.
\end{equation*}
We then decompose the latter expression as follows (ignoring \({ 1/r }\) for the moment):
\begin{multline*}
\dualitypairing[\big]{\mathcal{A}(u_{\textnormal{ref}}+w_{r,u})}{w_{r,u}-(u^\dagger - u_{\textnormal{ref}})}\\
= \underbrace{\dualitypairing[\big]{\mathcal{A}(u_{\textnormal{ref}}+w_{r,u})-\mathcal{A}u_{\textnormal{ref}}}{w_{r,u}}}_{\eqqcolon \textnormal{I}}
+ \underbrace{\dualitypairing[\big]{\mathcal{A}u_{\textnormal{ref}}}{w_{r,u}}}_{\eqqcolon \textnormal{II}}
- \underbrace{\dualitypairing[\big]{\mathcal{A}(u_{\textnormal{ref}}+w_{r,u})}{u^\dagger-u_{\textnormal{ref}}}}_{\eqqcolon \textnormal{III}}.
\end{multline*}
The growth condition~\eqref{eq:directional-growth-variant} and the definition of \({ c_0 }\) imply that
\begin{equation*}
\textnormal{I} \geq \gamma(\norm{w_{r,u}}) \dualitypairing[\Big]{\mathcal{A}\Bigl(u_{\textnormal{ref}}+\frac{w_{r,u}}{\norm{w_{r,u}}}\Bigr)-\mathcal{A}u_{\textnormal{ref}}}{\frac{w_{r,u}}{\norm{w_{r,u}}}} \geq c_0 \gamma(\norm{w_{r,u}})
\end{equation*}
whenever \({ r \geq \norm{u^\dagger - u_{\textnormal{ref}}} + 1 }\) (so that \({ w_{r, u} / \norm{ w_{r, u} } \in  K }\)).
Moreover,
\begin{equation*}
\textnormal{II} \geq - \norm{ \mathcal{A}u_{ \textnormal{ref}} }_{\X^*} \norm{ w_{r, u} }
\qquad \text{and} \qquad
\textnormal{III} \leq \norm{ \mathcal{A}(u_{ \textnormal{ref}} + w_{r, u}) }_{\X^*} \norm{ u^\dagger - u_{ \textnormal{ref}} },
\end{equation*}
so that, in total,
\begin{align} \label{eq:growth-middleterms}
\begin{aligned}
\dualitypairing*{\mathcal{A}\bigl(ru + (1-r)u^\dagger \bigr)}{u-u^\dagger}
&= \frac{ 1 }{ r } \left( \textnormal{I} + \textnormal{II} - \textnormal{III} \right) \\[1ex]
& \geq \frac{ \gamma(\norm{w_{r,u}}) }{ r } \left( c_0 - \frac{ \norm{ \mathcal{A}(u_{ \textnormal{ref}} + w_{r, u}) }_{\X^*} }{ \gamma(\norm{w_{r,u}}) } \, \norm{ u^\dagger - u_{ \textnormal{ref}} }  \right) \\[1ex]
&\phantom{= {}} - \norm{ \mathcal{A}u_{ \textnormal{ref}} }_{\X^*} \frac{ \norm{ w_{r, u} } }{ r }.
\end{aligned}
\end{align}
The latter term is uniformly bounded for all~\({ r > 2\norm{u^\dagger - u_{\textnormal{ref}}} }\) (and \({ u \in U_C }\)),
as seen from
\[
  \frac{\norm{w_{r,u}}}{r} = \norm[\Big]{\frac{u^\dagger - u_{\textnormal{ref}}}{r} + u-u^\dagger},
\]
which together with \({ \norm{u-u^\dagger} = 1 }\) implies that \({ \frac{1}{2} \leq \frac{\norm{w_{r,u}}}{r} \leq \frac{3}{2} }\).
Since
\begin{equation*}
\sup_{ u \in U_C } \frac{ \norm{ \mathcal{A}(u_{ \textnormal{ref}} + w_{r, u}) }_{\X^*} }{ \gamma(\norm{w_{r,u}}) } \xrightarrow[ r \to \infty ]{  } 0 
\end{equation*}
due to~\eqref{eq:spherical-growth},
it follows that the big parenthesis in~\eqref{eq:growth-middleterms}
eventually becomes positive for sufficiently large \({ r > 0 }\) independent of~\({ u }\).
Thus the entire right-hand side converges to~\({ \infty }\) as~\({ r \to \infty }\), uniformly over~\({ u \in U_C }\), in light of the superlinear growth of~\({ \gamma }\).
This proves the growth condition~\eqref{eq:coercivity-growth} in \Cref{assumptions}~\ref{assumptions:growth}.

\begin{remark}
As seen from~\eqref{eq:growth-middleterms}, the spherical growth condition~\eqref{eq:spherical-growth} is unnecessary if~\({ u_{ \textnormal{ref}} = u^\dagger }\).
\end{remark}


\begin{multicols}{2}[{\printbibheading\vspace*{-1em}}]
\begin{spacing}{1.12}
\printbibliography[heading=none]
\end{spacing}
\end{multicols}

\end{document}